\documentclass[12pt]{article}
\usepackage{amsmath,fullpage,amssymb,amsthm,hyperref,enumerate}
\usepackage{tikz,color,mathtools}
\usetikzlibrary{patterns}
\usepackage{multirow}
\usepackage{pgfplots}
\usepackage{cases}
 
\newtheorem{theorem}{Theorem}[section]

\newtheorem{corollary}[theorem]{Corollary}

\theoremstyle{definition}

\theoremstyle{remark}

\newcommand\D{\mathcal D}

\newcommand\tS{\tilde S}
\newcommand\dB{\widehat B}
\newcommand\dD{\widehat D}
\newcommand\dG{\widehat G}
\newcommand\dI{\widehat I}
\newcommand\Na{N}
\newcommand\uu{\textsf{u}}
\newcommand\dd{\textsf{d}}
\newcommand\dA{\widehat A}
\newcommand\dU{\widehat U}
\newcommand\dP{\widehat P}
\newcommand\btc{\begin{tabular}{@{\hspace{-2pt}}c@{\hspace{-2pt}}}}
\newcommand\et{\end{tabular}}
\DeclareMathOperator\NB{NB}
\newcommand\up{-- ++(1,1) circle(1.2pt)}
\newcommand\dn{-- ++(1,-1) circle(1.2pt)}

\title{On individual leaf depths of trees}

\author{Sergi Elizalde\thanks{Department of Mathematics, Dartmouth College, Hanover, NH 03755, USA.
\url{sergi.elizalde@dartmouth.edu}.
}}

\date{}

\begin{document}

\maketitle

\begin{abstract}
We explore a generating function trick which allows us to keep track of infinitely many statistics using finitely many variables, by recording their individual distributions rather than their joint distributions.
Building on previous work of Panholzer and Prodinger, we apply this method to study the depth distributions of individual nodes or leaves in rooted binary trees, plane trees, noncrossing trees, and increasing trees; the height distributions of individual vertices and individual steps in Dyck paths; and
the number of diagonals separating two fixed sides of a convex polygon in a triangulation or dissection.
We obtain both exact and asymptotic results, which sometimes refine known formulas or provide combinatorial proofs of results from the probability literature.
\end{abstract}

\noindent \small{{\bf Keywords:} leaf depth, tree, vertex height, limiting distribution, singularity analysis, multivariate generating function.}

\section{Introduction}

There is an extensive literature on rooted tree parameters that relate to the depth of its nodes, see \cite[Ch.\ VII.3.2]{flajolet_analytic_2009} for several examples. The {\it depth} (sometimes called {\it height}) of a node is defined as the number of edges in the path from the node to the root of the tree. For instance, it is known \cite[Prop.\ VII.3]{flajolet_analytic_2009} that,
for rooted trees with $n$ nodes from a so-called smooth simple variety (which includes binary trees and plane trees), the average depth of a random node behaves like $O(\sqrt{n})$. Specifically, it behaves like $\sqrt{\pi n}$ for binary trees, and like $\sqrt{\pi n}/2$ for plane trees. A related parameter is the height of a tree, which equals the maximum depth of any node.
It is known \cite[Thms.\ B and G]{flajolet_average_1982} that the average height 
is asymptotically $2\sqrt{\pi n}$ for binary trees, and $\sqrt{\pi n}$ for plane trees.

More generally, Drmota and Gittenberger~\cite{drmota_profile_1997} studied the {\em horizontal profile}, determined by the number of nodes at each depth, of a random plane tree as its size tends to infinity. They showed that it converges weakly to Brownian excursion local time, and that the same is true when considering instead the number of leaves at each depth.
As a variation of this notion, Bousquet-M\'elou~\cite{bousquet-melou_limit_2006} considered the {\em vertical profile} of a binary tree, determined by the number of nodes at each abscissa when the children of a node are placed one unit to its left and to its right. She 
gave a beautiful description of this vertical profile, and showed that the limit distribution is related to {\em integrated superBrownian excursion (ISE)}; see~\cite{bousquet-melou_density_2006} for more details.

In this paper we consider a related but different problem. Instead of focusing on the depth of a random node or a random leaf in a tree, we are interested in each individual leaf or node. For example, taking the natural order of the leaves of a binary tree from left to right, one can focus on the $r$th leaf of a tree for a particular value of $r$, and consider the distribution of its depth over all trees of a given size. 
This question for binary trees has been considered by Kirschenhofer~\cite{kirschenhofer_height_1983,kirschenhofer_new_1983}, Gutjahr and Pflug~\cite{gutjahr_asymptotic_1992}, Panholzer and Prodinger~\cite{panholzer_descendants_1997,panholzer_moments_2002}, and Drmota~\cite{drmota_distribution_1994}. 

Following the approach from~\cite{panholzer_descendants_1997}, we define multivariate generating functions that allow us to keep track of infinitely many statistics on combinatorial objects, such as the depth of the $r$th leaf on binary trees for each possible value of $r$, using finitely many variables.
This is achieved by having the coefficient of $x^r$ be the bivariate generating function enumerating the objects by size and by the value of the $r$th statistic. As a trade-off, these multivariate generating functions 
do not track the joint distributions of the various statistics, but rather only their individual distributions. 

In the case of trees, this approach is different from Bousquet-M\'elou's~\cite{bousquet-melou_limit_2006} in that, whereas her bivariate generating functions count trees by size and by the number of nodes for which the value of the parameter (e.g.\ depth or abscissa) is fixed, ours count trees by size and by the value of the parameter for a fixed node.

While most of the existing work using this technique focuses of binary trees, here we extend the method to a variety of other combinatorial objects, including plane trees, Schr\"oder trees, noncrossing trees, increasing binary trees, Dyck paths, and triangulations and dissections of polygons. 
For example, in the latter case, we study the number of diagonals that separate two given sides of the polygon in a random triangulation or dissection.
For Dyck paths, we study the height of the path at a given $x$-coordinate, and the height of the $r$th up-step of the path for given $r$, obtaining simple algebraic generating functions and recovering some results of Disanto and Munarini~\cite{disanto_local_2019} derived using a different approach.

Another novelty of our approach is that we are able use singularity analysis on our multivariate generating functions
to show that, in all of the above cases except for increasing trees, the distribution of the $r$th statistic for fixed $r$ converges to a discrete law as the size of the objects goes to infinity. We provide the probability generating functions for these limit distributions.

Generating functions for trees with respect to individual leaf depths also have applications to counting vertices of certain generalized associahedra, as defined by Bottman~\cite{bottman_2-associahedra_2019}. This avenue is currently being explored by the author in work in progress with Backman and Warrington.

A summary of the combinatorial objects and statistics that we study in the paper appears in Table~\ref{tab:summary}. For each combinatorial class that we consider, objects of size $n$ have a family of statistics $s_r$ associated to them, where typically 
$0\le r\le n$ or $0\le r\le 2n$.
For example, these statistics may be the depths of the individual leaves of a tree (ordered from left to right), or the heights of the vertices of a Dyck path.
We consider multivariate generating functions $F(x,y,z)$ where the coefficient of $x^ry^dz^n$ is the number of objects of size $n$ for which the statistic $s_r$ equals $d$.
Thus, the coefficient of $x^rz^n$ in $F(x,y,z)$, which we denote by $[x^rz^n]F(x,y,z)$, is a polynomial in $y$ that describes the distribution of the statistic $s_r$ on objects of size $n$.  The value $f_n=[x^rz^n]F(x,1,z)$ is the number of objects of size $n$ ---which in particular is independent of $r$, for $r$ in the appropriate range. The average value of $s_r$ on these objects equals
\begin{equation}\label{eq:average}
\frac{1}{f_n}[x^rz^n]\left.\frac{\partial F(x,y,z)}{\partial y}\right|_{y=1}.
\end{equation}

\begin{table}[h]
\centering
\begin{tabular}{c|c|c}
Combinatorial objects & Statistics & Sections in paper \\ \hline\hline
\multirow{2}{*}{binary trees} & leaf depths & \ref{sec:B} and \ref{sec:average_depth_binary} \\ \cline{2-3} 
& leaf abscissas & \ref{sec:abscissas} \\ \hline
\multirow{2}{*}{plane trees} & leaf depths & \ref{sec:P} \\ \cline{2-3} 
& node depths 
& \ref{sec:bijections} and \ref{sec:Dyck_upsteps} \\ \hline
Schr\"oder trees & leaf depths & \ref{sec:Schroeder} and \ref{sec:distribution_Schroeder} \\ \hline
noncrossing trees & node depths & \ref{sec:noncrossing} \\ \hline
\multirow{2}{*}{increasing binary trees} & leaf depths & \ref{sec:increasing-leaves} \\ \cline{2-3}
& internal node depths  & \ref{sec:increasing-nodes} \\ \hline
triangulations & \multirow{2}{*}{separating diagonals} & \ref{sec:triang}\\ \cline{1-1}\cline{3-3}
dissections &  & \ref{sec:dissections}\\ \hline
\multirow{2}{*}{Dyck paths} & vertex heights & \ref{sec:Dyck_vertices} and \ref{sec:Dyck_vertices_average}  \\ \cline{2-3} 
& up-step heights & \ref{sec:Dyck_upsteps} and \ref{sec:Dyck_upsteps_average} \\ \hline
\end{tabular}
\caption{A summary of the combinatorial objects and statistics studied in this paper.}
\label{tab:summary}
\end{table}

In Section~\ref{sec:binary} we illustrate the technique by enumerating binary trees with respect to the depth of each leaf. Some of the results in this section, such as the multivariate generating function, the average depth of the $r$th leaf in binary trees of size $n$, and 
its asymptotic behavior as $n\to\infty$, had been previously obtained by Kirschenhofer~\cite{kirschenhofer_height_1983,kirschenhofer_new_1983}, Gutjahr and Pflug~\cite{gutjahr_asymptotic_1992}, and Panholzer and Prodinger~\cite{panholzer_descendants_1997,panholzer_moments_2002}. 
Still, we describe the limiting distribution of the depth of the $r$th leaf for fixed $r$, which appears to be a new result. We also consider the abscissa of each leaf as defined in~\cite{bousquet-melou_limit_2006}.

In Section~\ref{sec:Dyck} we enumerate Dyck paths with respect to the height of each vertex, and also with respect to the height of each up-step. We deduce the average height of the $r$th vertex, the $r$th up-step, and the $r$th down-step, as well as the distribution of these statistics for fixed $r$ in the limit as $n\to\infty$.

In Section~\ref{sec:plane} we enumerate plane trees with respect to the depth of each leaf, and also with respect to the depth of each node in preorder ---the latter is equivalent to enumerating Dyck paths with respect to the height of each up-step. We then consider leaf depths in Schr\"oder trees and their asymptotic behavior.
In Section~\ref{sec:polygons} we translate the results from binary trees to triangulations of polygons, and from Schr\"oder trees to dissections of polygons. In both cases, the leaf depth statistic becomes the number of diagonals separating two given sides of the polygon.

In Section~\ref{sec:noncrossing} we enumerate noncrossing trees with respect to the depth of each node, and study the asymptotic behavior of these depths. Noncrossing trees are particularly suited to our approach because their nodes are naturally ordered. 
In Section~\ref{sec:increasing} we show that similar methods can also be applied to trees with labeled vertices, such as increasing binary trees.

\section{Binary trees}\label{sec:binary}

Complete binary trees are rooted trees where each internal (i.e., non-leaf) node has a left and a right child.
We will call these {\em binary trees} for short. 
Define the size of a binary tree to be its number of internal nodes, which is one less that the number of leaves. 
The number of binary trees of size $n$ is the $n$th Catalan number $c_n=\frac{1}{n+1}\binom{2n}{n}$, and so the corresponding generating function is
$$C(z)=\sum_{n\ge0} c_{n}z^n=\frac{1-\sqrt{1-4z}}{2z}.$$ 

There is a natural ordering of the leaves of a binary tree from left to right, obtained by traversing the tree in preorder. Equivalently,
a leaf $\ell_1$ is considered to precede another leaf $\ell_2$ if, at the first node where the paths from the root to these leaves differ, the path to $\ell_1$ goes left and the path to $\ell_2$ goes right. 
In a binary tree of size $n$, we index its $n+1$ leaves from $0$ to $n$ from left to right, so that the leftmost leaf is the $0$th leaf.
The {\em depth} of a leaf is the number of edges in the path from the root to that leaf.

\subsection{Enumeration with respect to leaf depths}\label{sec:B}

For $r\ge0$, let $B_r(y,z)$ be the bivariate generating function for binary trees where $z$ marks the size and $y$ marks the depth of the $r$th leaf (considering only the trees for which such leaf exists). Let $B(x,y,z)=\sum_{r\ge0}B_r(y,z)x^{r}$. The coefficient of $x^ry^dz^n$ in
$B(x,y,z)$ is the number of binary trees of size $n$ where the $r$th leaf has depth $d$. 
The following simple expression for $B(x,y,z)$ appears as \cite[Eq.~(8)]{panholzer_descendants_1997}.

\begin{theorem}[\cite{panholzer_descendants_1997}]
\label{thm:B}
$$B(x,y,z)=\frac{1}{1-yz C(z)-xyz C(xz)}
=\frac{2}{2-2y+y\sqrt{1-4z}+y\sqrt{1-4xz}}.$$
\end{theorem}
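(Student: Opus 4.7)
The plan is to establish the first equality $B(x,y,z)=1/(1-yzC(z)-xyzC(xz))$ by a path decomposition of a binary tree marked at a leaf, and then obtain the second expression as pure algebra using the closed form of $C(z)$.

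I would fix a pair $(T,\ell)$ consisting of a binary tree $T$ and a leaf $\ell$ of $T$, and decompose it according to the path from the root to $\ell$. If that path has length $d$, it is a sequence of $d$ internal nodes, each contributing a choice of either going to the left child (an \textbf{L}-step) or to the right child (an \textbf{R}-step), followed by $\ell$ itself. At each \textbf{L}-step the right-hanging subtree is an arbitrary binary tree, and at each \textbf{R}-step the left-hanging subtree is an arbitrary binary tree. The key observation is that the index $r$ of $\ell$ in the left-to-right order equals the total number of leaves in the subtrees hanging off at \textbf{R}-steps, because those are exactly the leaves lying strictly to the left of $\ell$.

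With this in mind, I would assign weights to each type of step on the path. An \textbf{L}-step contributes $yzC(z)$: the $y$ records one unit of depth, the $z$ counts the internal node on the path, and $C(z)$ generates the right-hanging subtree (whose leaves do not influence $r$). An \textbf{R}-step contributes $xyzC(xz)$: again $y$ for depth and $z$ for the internal node, while a left-hanging subtree of size $m$ has $m+1$ leaves, each contributing a factor $x$; the sum $\sum_{m\ge 0} c_m x^{m+1} z^m = xC(xz)$ packages this. Summing over all path lengths $d\ge 0$ gives the geometric series
\[
B(x,y,z)=\sum_{d\ge 0}\bigl(yzC(z)+xyzC(xz)\bigr)^d=\frac{1}{1-yzC(z)-xyzC(xz)}.
\]

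For the second equality I would use $zC(z)=\tfrac12(1-\sqrt{1-4z})$ (and analogously $xzC(xz)=\tfrac12(1-\sqrt{1-4xz})$), so the denominator becomes $1-\tfrac{y}{2}(1-\sqrt{1-4z})-\tfrac{y}{2}(1-\sqrt{1-4xz})$; multiplying numerator and denominator by $2$ yields $\frac{2}{2-2y+y\sqrt{1-4z}+y\sqrt{1-4xz}}$. The only real subtlety in the argument is the identification of $r$ with the number of leaves in the subtrees attached at \textbf{R}-steps; once that is set up correctly, the substitution $z\mapsto xz$ inside $C$ cleanly encodes the "$+1$" accounting for the extra leaf each such subtree contributes, and the rest is a geometric series and a bit of algebra.
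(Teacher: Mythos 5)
Your proof is correct and is essentially the paper's argument in unrolled form: the paper decomposes at the root to get the linear functional equation $B=1+yzC(z)B+xyzC(xz)B$, and your sum over the root-to-leaf path, with weight $yzC(z)$ per left step and $xyzC(xz)$ per right step, is exactly the geometric-series solution of that equation, with the same key bookkeeping that the leaf index equals the total number of leaves in the subtrees hanging off right steps. The algebraic passage to the closed form via $zC(z)=\tfrac12(1-\sqrt{1-4z})$ also matches.
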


\begin{proof}
One can think of $B(x,y,z)$ as the generating function for binary trees with a distinguished\footnote{One could use the term {\em marked} instead of {\em distinguished}, but we prefer to avoid confusion with the notion of a variable marking a statistic.} leaf, where $x$ marks the index of this leaf, and $y$ marks its depth.  We will show that $B(x,y,z)$ satisfies the equation
\begin{equation}\label{eq:B} B(x,y,z)=1+yzB(x,y,z)C(z)+xyzC(xz)B(x,y,z).\end{equation}
Indeed, the binary tree with one node contributes $1$ to the generating function, and any other binary tree can be decomposed into the two subtrees of the root. Consider two cases. 

If the distinguished leaf is on the left subtree, then this subtree contributes $B(x,y,z)$ to the generating function, with the factor $yz$ accounting for the root and the fact the depth of the distinguished leaf in the original tree is one more than in the subtree. In this case, the right subtree simply contributes a $C(z)$ factor.

If the distinguished leaf is on the right subtree, then the left subtree contributes a factor of $xC(xz)$, since it shifts the indexing of the leaves in the right subtree (where the distinguished leaf lies) by the number of leaves in the left subtree. Now the right subtree contributes $B(x,y,z)$, and there is again a factor $yz$ coming from the root.

Solving~\eqref{eq:B} for $B(x,y,z)$ gives the stated expression.
\end{proof}

As an example, the coefficient of $z^3$ in $B(x,y,z)$ is the polynomial
$$(2y+2y^2+y^3)+(2y^2+3y^3)\,x+(2y^2+3y^3)\,x^2+(2y+2y^2+y^3)\,x^3,$$
whose four coefficients, as a polynomial in $x$, describe the distribution of the depths of each of the four leaves in the trees in Figure~\ref{fig:n=3}. The distribution of the depths of each leaf in binary trees of size 30 is plotted in Figure~\ref{fig:distribution30}.

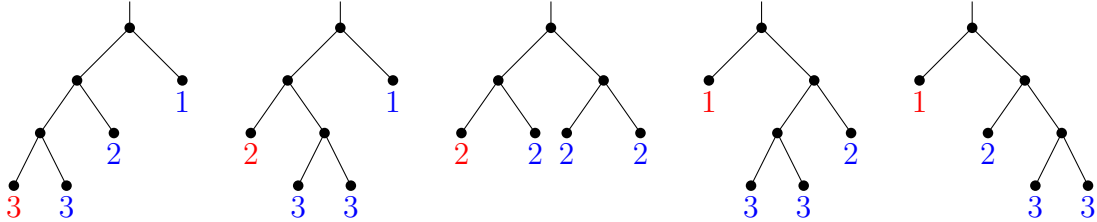
\begin{figure}[htb]
\centering
\begin{tikzpicture}[scale=.7]
\draw (0,0)--(0,.5);
\fill (0,0) circle (.1) coordinate (v0);
\fill (-1,-1) circle (.1) coordinate (v1);
\fill (1,-1) circle (.1) coordinate (v2) node[below,blue] {1};
\fill (-1.7,-2) circle (.1) coordinate (v3);
\fill (-.3,-2) circle (.1) coordinate (v4) node[below,blue] {2};
\fill (-2.2,-3) circle (.1) coordinate (v5) node[below,red] {3};
\fill (-1.2,-3) circle (.1) coordinate (v6) node[below,blue] {3};
\draw (v2)--(v0)--(v1)--(v3)--(v5);
\draw (v3)--(v6);
\draw (v1)--(v4);

\begin{scope}[shift={(4,0)}]
\draw (0,0)--(0,.5);
\fill (0,0) circle (.1) coordinate (v0);
\fill (-1,-1) circle (.1) coordinate (v1);
\fill (1,-1) circle (.1) coordinate (v2) node[below,blue] {1};
\fill (-1.7,-2) circle (.1) coordinate (v3) node[below,red] {2};
\fill (-.3,-2) circle (.1) coordinate (v4);
\fill (-.8,-3) circle (.1) coordinate (v5) node[below,blue] {3};
\fill (.2,-3) circle (.1) coordinate (v6) node[below,blue] {3};
\draw (v2)--(v0)--(v1)--(v4)--(v5);
\draw (v4)--(v6);
\draw (v1)--(v3);
\end{scope}

\begin{scope}[shift={(8,0)}]
\draw (0,0)--(0,.5);
\fill (0,0) circle (.1) coordinate (v0);
\fill (-1,-1) circle (.1) coordinate (v1);
\fill (1,-1) circle (.1) coordinate (v2);
\fill (-1.7,-2) circle (.1) coordinate (v3) node[below,red] {2};
\fill (-.3,-2) circle (.1) coordinate (v4) node[below,blue] {2};
\fill (.3,-2) circle (.1) coordinate (v5) node[below,blue] {2};
\fill (1.7,-2) circle (.1) coordinate (v6) node[below,blue] {2};
\draw (v6)--(v2)--(v0)--(v1)--(v3);
\draw (v1)--(v4);
\draw (v2)--(v5);
\end{scope}

\begin{scope}[shift={(12,0)}]
\draw (0,0)--(0,.5);
\fill (0,0) circle (.1) coordinate (v0);
\fill (1,-1) circle (.1) coordinate (v1);
\fill (-1,-1) circle (.1) coordinate (v2) node[below,red] {1};
\fill (1.7,-2) circle (.1) coordinate (v3) node[below,blue] {2};
\fill (.3,-2) circle (.1) coordinate (v4);
\fill (.8,-3) circle (.1) coordinate (v5) node[below,blue] {3};
\fill (-.2,-3) circle (.1) coordinate (v6) node[below,blue] {3};
\draw (v2)--(v0)--(v1)--(v4)--(v5);
\draw (v4)--(v6);
\draw (v1)--(v3);
\end{scope}

\begin{scope}[shift={(16,0)}]
\draw (0,0)--(0,.5);
\fill (0,0) circle (.1) coordinate (v0);
\fill (1,-1) circle (.1) coordinate (v1);
\fill (-1,-1) circle (.1) coordinate (v2) node[below,red] {1};
\fill (1.7,-2) circle (.1) coordinate (v3);
\fill (.3,-2) circle (.1) coordinate (v4) node[below,blue] {2};
\fill (2.2,-3) circle (.1) coordinate (v5) node[below,blue] {3};
\fill (1.2,-3) circle (.1) coordinate (v6) node[below,blue] {3};
\draw (v2)--(v0)--(v1)--(v3)--(v5);
\draw (v3)--(v6);
\draw (v1)--(v4);
\end{scope}
\end{tikzpicture}
\caption{The five binary trees of size 3, with each leaf labeled by its depth. The depths of the $0$th leaf, in red, are encoded by the polynomial $2y+2y^2+y^3$.}
\label{fig:n=3}
\end{figure}

\begin{figure}[htb]
\centering
\begin{tikzpicture}
\begin{axis}
[
    axis lines = left,
    xlabel = \(r\),
    ylabel = {depth},
    view={130}{50},
    scaled z ticks=false,
	ztick={0},
    xtick={0,10,20,30},
    ytick={0,10,20,30},
    xticklabels={30,20,10,0},
    yticklabels={0,10,20,30},
    axis line style={-},
]
\addplot3[
    surf,
] 
coordinates {
(0, 1, 1002242216651368) (0, 2, 1002242216651368) (0, 3, 738494264901008) (0, 4, 474746313150648) (0, 5, 280531912316292) (0, 6, 155851062397940) (0, 7, 82336410323440) (0, 8, 41620603020640) (0, 9, 20198233818840) (0, 10, 9425842448792) (0, 11, 4232010895376) (0, 12, 1827459250276) (0, 13, 758201178306) (0, 14, 301758997386) (0, 15, 114955808528) (0, 16, 41802112192) (0, 17, 14460614392) (0, 18, 4739192952) (0, 19, 1464140912) (0, 20, 423830264) (0, 21, 114108148) (0, 22, 28312548) (0, 23, 6399888) (0, 24, 1298528) (0, 25, 231880) (0, 26, 35464) (0, 27, 4464) (0, 28, 434) (0, 29, 29) (0, 30, 1) 

(1, 1, 0) (1, 2, 527495903500720) (1, 3, 791243855251080) (1, 4, 776857603337424) (1, 5, 623404249591760) (1, 6, 441087912447000) (1, 7, 285010651119600) (1, 8, 171378953614400) (1, 9, 96951522330432) (1, 10, 51938315534160) (1, 11, 26450068096100) (1, 12, 12831096863640) (1, 13, 5933748351960) (1, 14, 2615244644012) (1, 15, 1097305445040) (1, 16, 437463964800) (1, 17, 165264164480) (1, 18, 58950936720) (1, 19, 19765902312) (1, 20, 6194442320) (1, 21, 1801707600) (1, 22, 482078520) (1, 23, 117331280) (1, 24, 25599552) (1, 25, 4910400) (1, 26, 806000) (1, 27, 108810) (1, 28, 11340) (1, 29, 812) (1, 30, 30) 

(2, 1, 0) (2, 2, 139067101832008) (2, 3, 417201305496024) (2, 4, 621866096871432) (2, 5, 675656202297020) (2, 6, 607743300181500) (2, 7, 479712044943264) (2, 8, 342827857822112) (2, 9, 225931672573596) (2, 10, 138870531605520) (2, 11, 80182121323700) (2, 12, 43681516714044) (2, 13, 22509284783622) (2, 14, 10983474599006) (2, 15, 5074581991680) (2, 16, 2217551073600) (2, 17, 914696842064) (2, 18, 355066026552) (2, 19, 129185242776) (2, 20, 43823696840) (2, 21, 13768454700) (2, 22, 3972173964) (2, 23, 1040797472) (2, 24, 244154016) (2, 25, 50299050) (2, 26, 8859500) (2, 27, 1282554) (2, 28, 143262) (2, 29, 10991) (2, 30, 435) 

(3, 1, 0) (3, 2, 73469412288608) (3, 3, 220408236865824) (3, 4, 401920902520032) (3, 5, 540216266828000) (3, 6, 590268957672880) (3, 7, 554570584775144) (3, 8, 463462865075392) (3, 9, 351991126709376) (3, 10, 246364889589280) (3, 11, 160390045421760) (3, 12, 97717626772304) (3, 13, 55928609906832) (3, 14, 30136605041816) (3, 15, 15300242891920) (3, 16, 7316199511040) (3, 17, 3290263131904) (3, 18, 1388188879392) (3, 19, 547479118816) (3, 20, 200847963360) (3, 21, 68104356320) (3, 22, 21168947184) (3, 23, 5967299492) (3, 24, 1504090336) (3, 25, 332595200) (3, 26, 62826400) (3, 27, 9747504) (3, 28, 1166312) (3, 29, 95816) (3, 30, 4060) 

(4, 1, 0) (4, 2, 48619464014520) (4, 3, 145858392043560) (4, 4, 275642593943544) (4, 5, 405823689427320) (4, 6, 498470896473660) (4, 7, 530454608146260) (4, 8, 501662230520640) (4, 9, 429284744728812) (4, 10, 336689874218880) (4, 11, 244249362124020) (4, 12, 164942230002660) (4, 13, 104134101450210) (4, 14, 61626195581202) (4, 15, 34229610805320) (4, 16, 17845765922880) (4, 17, 8723975327760) (4, 18, 3990354009480) (4, 19, 1702137911112) (4, 20, 674015576760) (4, 21, 246250313190) (4, 22, 82343036160) (4, 23, 24937362450) (4, 24, 6745222512) (4, 25, 1599084450) (4, 26, 323584300) (4, 27, 53747010) (4, 28, 6881490) (4, 29, 604737) (4, 30, 27405) 

(5, 1, 0) (5, 2, 36117316125072) (5, 3, 108351948375216) (5, 4, 207811791564624) (5, 5, 316712635321680) (5, 6, 411945984070920) (5, 7, 472214696996952) (5, 8, 485996226084096) (5, 9, 454853008419264) (5, 10, 390816812091888) (5, 11, 310487062655484) (5, 12, 229287606580296) (5, 13, 157983330328200) (5, 14, 101808510158628) (5, 15, 61437548495232) (5, 16, 34723341813888) (5, 17, 18363280091520) (5, 18, 9068984284464) (5, 19, 4169534479812) (5, 20, 1776708457056) (5, 21, 697520128032) (5, 22, 250317371304) (5, 23, 81267414228) (5, 24, 23542130976) (5, 25, 5972377920) (5, 26, 1292381584) (5, 27, 229425534) (5, 28, 31380804) (5, 29, 2945124) (5, 30, 142506) 

(6, 1, 0) (6, 2, 28817007546600) (6, 3, 86451022639800) (6, 4, 167138643770280) (6, 5, 259353067919400) (6, 6, 347532685511700) (6, 7, 415538270319900) (6, 8, 451003330814400) (6, 9, 448906755941280) (6, 10, 412603705344600) (6, 11, 351942012160200) (6, 12, 279613079984700) (6, 13, 207451321219350) (6, 14, 143964839859750) (6, 15, 93517651336200) (6, 16, 56853066379200) (6, 17, 32312059825500) (6, 18, 17132802850800) (6, 19, 8448450634260) (6, 20, 3857435407800) (6, 21, 1621160991450) (6, 22, 622256434800) (6, 23, 215903052750) (6, 24, 66794775824) (6, 25, 18085300100) (6, 26, 4174606800) (6, 27, 790159500) (6, 28, 115192350) (6, 29, 11519235) (6, 30, 593775) 

(7, 1, 0) (7, 2, 24151396800960) (7, 3, 72454190402880) (7, 4, 140772517928640) (7, 5, 220834653624000) (7, 6, 301244869317600) (7, 7, 369507155340960) (7, 8, 414693578388480) (7, 9, 430022829277440) (7, 10, 414453629966400) (7, 11, 372649465425600) (7, 12, 313336564596000) (7, 13, 246746443600800) (7, 14, 182109680953200) (7, 15, 125968654398600) (7, 16, 81607686825600) (7, 17, 49441802891520) (7, 18, 27946829252160) (7, 19, 14689045500480) (7, 20, 7146776644800) (7, 21, 3199496529600) (7, 22, 1307675131488) (7, 23, 482935257024) (7, 24, 158964064512) (7, 25, 45776544000) (7, 26, 11234142400) (7, 27, 2259990720) (7, 28, 350073360) (7, 29, 37187280) (7, 30, 2035800) 

(8, 1, 0) (8, 2, 20992057103160) (8, 3, 62976171309480) (8, 4, 122762180094840) (8, 5, 193969758411000) (8, 6, 267699749183100) (8, 7, 333876824304660) (8, 8, 383063736310080) (8, 9, 408347681084640) (8, 10, 406784298904200) (8, 11, 379939341533400) (8, 12, 333323100861300) (8, 13, 274892057540100) (8, 14, 213116802648750) (8, 15, 155224265045850) (8, 16, 106080957309600) (8, 17, 67889609025420) (8, 18, 40576256847360) (8, 19, 22566255559380) (8, 20, 11622403632840) (8, 21, 5509368687690) (8, 22, 2384568797688) (8, 23, 932610218694) (8, 24, 325080586512) (8, 25, 99122591700) (8, 26, 25754185600) (8, 27, 5484311820) (8, 28, 899090010) (8, 29, 101060505) (8, 30, 5852925) 

(9, 1, 0) (9, 2, 18773384401200) (9, 3, 56320153203600) (9, 4, 110040914720880) (9, 5, 174736885580400) (9, 6, 243097764538200) (9, 7, 306677480109000) (9, 8, 357232261267200) (9, 9, 388169929730880) (9, 10, 395777704357200) (9, 11, 379902085714800) (9, 12, 343871333341800) (9, 13, 293659118181000) (9, 14, 236518826844300) (9, 15, 179477662349700) (9, 16, 128098514315200) (9, 17, 85791475827200) (9, 18, 53748393064080) (9, 19, 31374930109380) (9, 20, 16978678196480) (9, 21, 8463413407680) (9, 22, 3854370488760) (9, 23, 1586847619940) (9, 24, 582436837664) (9, 25, 187039864000) (9, 26, 51186049200) (9, 27, 11480747850) (9, 28, 1982280300) (9, 29, 234637260) (9, 30, 14307150) 

(10, 1, 0) (10, 2, 17184867259560) (10, 3, 51554601778680) (10, 4, 100896207932520) (10, 5, 160783694471400) (10, 6, 224958616096500) (10, 7, 286091156611260) (10, 8, 336833868127680) (10, 9, 370992295285920) (10, 10, 384575618833560) (10, 11, 376482940116360) (10, 12, 348642961335900) (10, 13, 305551206855180) (10, 14, 253307735570890) (10, 15, 198397668298830) (10, 16, 146524296094176) (10, 17, 101772005246912) (10, 18, 66256231185144) (10, 19, 40258749579048) (10, 20, 22710326133352) (10, 21, 11814826167606) (10, 22, 5621104719384) (10, 23, 2419509225210) (10, 24, 929018343792) (10, 25, 312235818510) (10, 26, 89454024660) (10, 27, 21008125710) (10, 28, 3798104310) (10, 29, 470705235) (10, 30, 30045015) 

(11, 1, 0) (11, 2, 16044839210400) (11, 3, 48134517631200) (11, 4, 94314701463840) (11, 5, 150676723111200) (11, 6, 211672078347600) (11, 7, 270741420658800) (11, 8, 321191043264000) (11, 9, 357182829843840) (11, 10, 374655121999200) (11, 11, 371980371444000) (11, 12, 350202793393200) (11, 13, 312784488073200) (11, 14, 264904035404008) (11, 15, 212465435702520) (11, 16, 161050461586560) (11, 17, 115056135434240) (11, 18, 77195588502240) (11, 19, 48426544711584) (11, 20, 28248165156640) (11, 21, 15217226388000) (11, 22, 7505522104080) (11, 23, 3352447213500) (11, 24, 1336831897632) (11, 25, 466893081600) (11, 26, 139061551200) (11, 27, 33961496400) (11, 28, 6385743000) (11, 29, 823051320) (11, 30, 54627300) 

(12, 1, 0) (12, 2, 15242597249880) (12, 3, 45727791749640) (12, 4, 89673981223320) (12, 5, 143517961119000) (12, 6, 202187996076300) (12, 7, 259649962861380) (12, 8, 309671686309440) (12, 9, 346695711354720) (12, 10, 366669961239400) (12, 11, 367673109947000) (12, 12, 350196602253476) (12, 13, 317008585939188) (12, 14, 272615295086134) (12, 15, 222430466672050) (12, 16, 171835532712480) (12, 17, 125338541077568) (12, 18, 86007255936264) (12, 19, 55270233768152) (12, 20, 33075374019480) (12, 21, 18303532538200) (12, 22, 9284849778204) (12, 23, 4269610837492) (12, 24, 1754283346816) (12, 25, 631723682150) (12, 26, 194097614100) (12, 27, 48915625590) (12, 28, 9492780570) (12, 29, 1262801085) (12, 30, 86493225) 

(13, 1, 0) (13, 2, 14709639304080) (13, 3, 44128917912240) (13, 4, 86586754781520) (13, 5, 138740987826000) (13, 6, 195826093921800) (13, 7, 252148628917080) (13, 8, 301782544887040) (13, 9, 339368699113920) (13, 10, 360886728288240) (13, 11, 364254723996240) (13, 12, 349630969004856) (13, 13, 319343446602648) (13, 14, 277448662278564) (13, 15, 229004325115020) (13, 16, 179208222984000) (13, 17, 132587203614208) (13, 18, 92403601408944) (13, 19, 60384505909392) (13, 20, 36790237342800) (13, 21, 20750609491920) (13, 22, 10739091470184) (13, 23, 5042599030392) (13, 24, 2117189402496) (13, 25, 779545771200) (13, 26, 245012440400) (13, 27, 63183762990) (13, 28, 12549180420) (13, 29, 1708573860) (13, 30, 119759850) 

(14, 1, 0) (14, 2, 14404600701000) (14, 3, 43213802103000) (14, 4, 84818262610440) (14, 5, 135999299032200) (14, 6, 192162844364100) (14, 7, 247807363353900) (14, 8, 297181573816000) (14, 9, 335043669630240) (14, 10, 357400091852280) (14, 11, 362089249907880) (14, 12, 349091383697100) (14, 13, 320491259316540) (14, 14, 280088751236658) (14, 15, 232728783511110) (14, 16, 183486834549600) (14, 17, 136880064437440) (14, 18, 96265043253720) (14, 19, 63531400782024) (14, 20, 39120758428680) (14, 21, 22316511437640) (14, 22, 11688735078900) (14, 23, 5557920025020) (14, 24, 2364253456512) (14, 25, 882333415200) (14, 26, 281174277800) (14, 27, 73533727800) (14, 28, 14813054550) (14, 29, 2045612295) (14, 30, 145422675) 

(15, 1, 0) (15, 2, 14305258627200) (15, 3, 42915775881600) (15, 4, 84242078582400) (15, 5, 135105220368000) (15, 6, 190966372266560) (15, 7, 246386003837120) (15, 8, 295669654640640) (15, 9, 333614295559680) (15, 10, 356236415015808) (15, 11, 361350372206208) (15, 12, 348880439827392) (15, 13, 320833646028480) (15, 14, 280926575363616) (15, 15, 233933618110304) (15, 16, 184887830305280) (15, 17, 138299974483968) (15, 18, 97554425948544) (15, 19, 64592143771776) (15, 20, 39913905045120) (15, 21, 22854721378944) (15, 22, 12018461480256) (15, 23, 5738713506240) (15, 24, 2451854322688) (15, 25, 919170557440) (15, 26, 294273982080) (15, 27, 77323409280) (15, 28, 15650822880) (15, 29, 2171645280) (15, 30, 155117520) 

(16, 1, 0) (16, 2, 14404600701000) (16, 3, 43213802103000) (16, 4, 84818262610440) (16, 5, 135999299032200) (16, 6, 192162844364100) (16, 7, 247807363353900) (16, 8, 297181573816000) (16, 9, 335043669630240) (16, 10, 357400091852280) (16, 11, 362089249907880) (16, 12, 349091383697100) (16, 13, 320491259316540) (16, 14, 280088751236658) (16, 15, 232728783511110) (16, 16, 183486834549600) (16, 17, 136880064437440) (16, 18, 96265043253720) (16, 19, 63531400782024) (16, 20, 39120758428680) (16, 21, 22316511437640) (16, 22, 11688735078900) (16, 23, 5557920025020) (16, 24, 2364253456512) (16, 25, 882333415200) (16, 26, 281174277800) (16, 27, 73533727800) (16, 28, 14813054550) (16, 29, 2045612295) (16, 30, 145422675) 

(17, 1, 0) (17, 2, 14709639304080) (17, 3, 44128917912240) (17, 4, 86586754781520) (17, 5, 138740987826000) (17, 6, 195826093921800) (17, 7, 252148628917080) (17, 8, 301782544887040) (17, 9, 339368699113920) (17, 10, 360886728288240) (17, 11, 364254723996240) (17, 12, 349630969004856) (17, 13, 319343446602648) (17, 14, 277448662278564) (17, 15, 229004325115020) (17, 16, 179208222984000) (17, 17, 132587203614208) (17, 18, 92403601408944) (17, 19, 60384505909392) (17, 20, 36790237342800) (17, 21, 20750609491920) (17, 22, 10739091470184) (17, 23, 5042599030392) (17, 24, 2117189402496) (17, 25, 779545771200) (17, 26, 245012440400) (17, 27, 63183762990) (17, 28, 12549180420) (17, 29, 1708573860) (17, 30, 119759850) 

(18, 1, 0) (18, 2, 15242597249880) (18, 3, 45727791749640) (18, 4, 89673981223320) (18, 5, 143517961119000) (18, 6, 202187996076300) (18, 7, 259649962861380) (18, 8, 309671686309440) (18, 9, 346695711354720) (18, 10, 366669961239400) (18, 11, 367673109947000) (18, 12, 350196602253476) (18, 13, 317008585939188) (18, 14, 272615295086134) (18, 15, 222430466672050) (18, 16, 171835532712480) (18, 17, 125338541077568) (18, 18, 86007255936264) (18, 19, 55270233768152) (18, 20, 33075374019480) (18, 21, 18303532538200) (18, 22, 9284849778204) (18, 23, 4269610837492) (18, 24, 1754283346816) (18, 25, 631723682150) (18, 26, 194097614100) (18, 27, 48915625590) (18, 28, 9492780570) (18, 29, 1262801085) (18, 30, 86493225) 

(19, 1, 0) (19, 2, 16044839210400) (19, 3, 48134517631200) (19, 4, 94314701463840) (19, 5, 150676723111200) (19, 6, 211672078347600) (19, 7, 270741420658800) (19, 8, 321191043264000) (19, 9, 357182829843840) (19, 10, 374655121999200) (19, 11, 371980371444000) (19, 12, 350202793393200) (19, 13, 312784488073200) (19, 14, 264904035404008) (19, 15, 212465435702520) (19, 16, 161050461586560) (19, 17, 115056135434240) (19, 18, 77195588502240) (19, 19, 48426544711584) (19, 20, 28248165156640) (19, 21, 15217226388000) (19, 22, 7505522104080) (19, 23, 3352447213500) (19, 24, 1336831897632) (19, 25, 466893081600) (19, 26, 139061551200) (19, 27, 33961496400) (19, 28, 6385743000) (19, 29, 823051320) (19, 30, 54627300) 

(20, 1, 0) (20, 2, 17184867259560) (20, 3, 51554601778680) (20, 4, 100896207932520) (20, 5, 160783694471400) (20, 6, 224958616096500) (20, 7, 286091156611260) (20, 8, 336833868127680) (20, 9, 370992295285920) (20, 10, 384575618833560) (20, 11, 376482940116360) (20, 12, 348642961335900) (20, 13, 305551206855180) (20, 14, 253307735570890) (20, 15, 198397668298830) (20, 16, 146524296094176) (20, 17, 101772005246912) (20, 18, 66256231185144) (20, 19, 40258749579048) (20, 20, 22710326133352) (20, 21, 11814826167606) (20, 22, 5621104719384) (20, 23, 2419509225210) (20, 24, 929018343792) (20, 25, 312235818510) (20, 26, 89454024660) (20, 27, 21008125710) (20, 28, 3798104310) (20, 29, 470705235) (20, 30, 30045015) 

(21, 1, 0) (21, 2, 18773384401200) (21, 3, 56320153203600) (21, 4, 110040914720880) (21, 5, 174736885580400) (21, 6, 243097764538200) (21, 7, 306677480109000) (21, 8, 357232261267200) (21, 9, 388169929730880) (21, 10, 395777704357200) (21, 11, 379902085714800) (21, 12, 343871333341800) (21, 13, 293659118181000) (21, 14, 236518826844300) (21, 15, 179477662349700) (21, 16, 128098514315200) (21, 17, 85791475827200) (21, 18, 53748393064080) (21, 19, 31374930109380) (21, 20, 16978678196480) (21, 21, 8463413407680) (21, 22, 3854370488760) (21, 23, 1586847619940) (21, 24, 582436837664) (21, 25, 187039864000) (21, 26, 51186049200) (21, 27, 11480747850) (21, 28, 1982280300) (21, 29, 234637260) (21, 30, 14307150) 

(22, 1, 0) (22, 2, 20992057103160) (22, 3, 62976171309480) (22, 4, 122762180094840) (22, 5, 193969758411000) (22, 6, 267699749183100) (22, 7, 333876824304660) (22, 8, 383063736310080) (22, 9, 408347681084640) (22, 10, 406784298904200) (22, 11, 379939341533400) (22, 12, 333323100861300) (22, 13, 274892057540100) (22, 14, 213116802648750) (22, 15, 155224265045850) (22, 16, 106080957309600) (22, 17, 67889609025420) (22, 18, 40576256847360) (22, 19, 22566255559380) (22, 20, 11622403632840) (22, 21, 5509368687690) (22, 22, 2384568797688) (22, 23, 932610218694) (22, 24, 325080586512) (22, 25, 99122591700) (22, 26, 25754185600) (22, 27, 5484311820) (22, 28, 899090010) (22, 29, 101060505) (22, 30, 5852925) 

(23, 1, 0) (23, 2, 24151396800960) (23, 3, 72454190402880) (23, 4, 140772517928640) (23, 5, 220834653624000) (23, 6, 301244869317600) (23, 7, 369507155340960) (23, 8, 414693578388480) (23, 9, 430022829277440) (23, 10, 414453629966400) (23, 11, 372649465425600) (23, 12, 313336564596000) (23, 13, 246746443600800) (23, 14, 182109680953200) (23, 15, 125968654398600) (23, 16, 81607686825600) (23, 17, 49441802891520) (23, 18, 27946829252160) (23, 19, 14689045500480) (23, 20, 7146776644800) (23, 21, 3199496529600) (23, 22, 1307675131488) (23, 23, 482935257024) (23, 24, 158964064512) (23, 25, 45776544000) (23, 26, 11234142400) (23, 27, 2259990720) (23, 28, 350073360) (23, 29, 37187280) (23, 30, 2035800) 

(24, 1, 0) (24, 2, 28817007546600) (24, 3, 86451022639800) (24, 4, 167138643770280) (24, 5, 259353067919400) (24, 6, 347532685511700) (24, 7, 415538270319900) (24, 8, 451003330814400) (24, 9, 448906755941280) (24, 10, 412603705344600) (24, 11, 351942012160200) (24, 12, 279613079984700) (24, 13, 207451321219350) (24, 14, 143964839859750) (24, 15, 93517651336200) (24, 16, 56853066379200) (24, 17, 32312059825500) (24, 18, 17132802850800) (24, 19, 8448450634260) (24, 20, 3857435407800) (24, 21, 1621160991450) (24, 22, 622256434800) (24, 23, 215903052750) (24, 24, 66794775824) (24, 25, 18085300100) (24, 26, 4174606800) (24, 27, 790159500) (24, 28, 115192350) (24, 29, 11519235) (24, 30, 593775) 

(25, 1, 0) (25, 2, 36117316125072) (25, 3, 108351948375216) (25, 4, 207811791564624) (25, 5, 316712635321680) (25, 6, 411945984070920) (25, 7, 472214696996952) (25, 8, 485996226084096) (25, 9, 454853008419264) (25, 10, 390816812091888) (25, 11, 310487062655484) (25, 12, 229287606580296) (25, 13, 157983330328200) (25, 14, 101808510158628) (25, 15, 61437548495232) (25, 16, 34723341813888) (25, 17, 18363280091520) (25, 18, 9068984284464) (25, 19, 4169534479812) (25, 20, 1776708457056) (25, 21, 697520128032) (25, 22, 250317371304) (25, 23, 81267414228) (25, 24, 23542130976) (25, 25, 5972377920) (25, 26, 1292381584) (25, 27, 229425534) (25, 28, 31380804) (25, 29, 2945124) (25, 30, 142506) 

(26, 1, 0) (26, 2, 48619464014520) (26, 3, 145858392043560) (26, 4, 275642593943544) (26, 5, 405823689427320) (26, 6, 498470896473660) (26, 7, 530454608146260) (26, 8, 501662230520640) (26, 9, 429284744728812) (26, 10, 336689874218880) (26, 11, 244249362124020) (26, 12, 164942230002660) (26, 13, 104134101450210) (26, 14, 61626195581202) (26, 15, 34229610805320) (26, 16, 17845765922880) (26, 17, 8723975327760) (26, 18, 3990354009480) (26, 19, 1702137911112) (26, 20, 674015576760) (26, 21, 246250313190) (26, 22, 82343036160) (26, 23, 24937362450) (26, 24, 6745222512) (26, 25, 1599084450) (26, 26, 323584300) (26, 27, 53747010) (26, 28, 6881490) (26, 29, 604737) (26, 30, 27405) 

(27, 1, 0) (27, 2, 73469412288608) (27, 3, 220408236865824) (27, 4, 401920902520032) (27, 5, 540216266828000) (27, 6, 590268957672880) (27, 7, 554570584775144) (27, 8, 463462865075392) (27, 9, 351991126709376) (27, 10, 246364889589280) (27, 11, 160390045421760) (27, 12, 97717626772304) (27, 13, 55928609906832) (27, 14, 30136605041816) (27, 15, 15300242891920) (27, 16, 7316199511040) (27, 17, 3290263131904) (27, 18, 1388188879392) (27, 19, 547479118816) (27, 20, 200847963360) (27, 21, 68104356320) (27, 22, 21168947184) (27, 23, 5967299492) (27, 24, 1504090336) (27, 25, 332595200) (27, 26, 62826400) (27, 27, 9747504) (27, 28, 1166312) (27, 29, 95816) (27, 30, 4060) 

(28, 1, 0) (28, 2, 139067101832008) (28, 3, 417201305496024) (28, 4, 621866096871432) (28, 5, 675656202297020) (28, 6, 607743300181500) (28, 7, 479712044943264) (28, 8, 342827857822112) (28, 9, 225931672573596) (28, 10, 138870531605520) (28, 11, 80182121323700) (28, 12, 43681516714044) (28, 13, 22509284783622) (28, 14, 10983474599006) (28, 15, 5074581991680) (28, 16, 2217551073600) (28, 17, 914696842064) (28, 18, 355066026552) (28, 19, 129185242776) (28, 20, 43823696840) (28, 21, 13768454700) (28, 22, 3972173964) (28, 23, 1040797472) (28, 24, 244154016) (28, 25, 50299050) (28, 26, 8859500) (28, 27, 1282554) (28, 28, 143262) (28, 29, 10991) (28, 30, 435) 

(29, 1, 0) (29, 2, 527495903500720) (29, 3, 791243855251080) (29, 4, 776857603337424) (29, 5, 623404249591760) (29, 6, 441087912447000) (29, 7, 285010651119600) (29, 8, 171378953614400) (29, 9, 96951522330432) (29, 10, 51938315534160) (29, 11, 26450068096100) (29, 12, 12831096863640) (29, 13, 5933748351960) (29, 14, 2615244644012) (29, 15, 1097305445040) (29, 16, 437463964800) (29, 17, 165264164480) (29, 18, 58950936720) (29, 19, 19765902312) (29, 20, 6194442320) (29, 21, 1801707600) (29, 22, 482078520) (29, 23, 117331280) (29, 24, 25599552) (29, 25, 4910400) (29, 26, 806000) (29, 27, 108810) (29, 28, 11340) (29, 29, 812) (29, 30, 30) 

(30, 1, 1002242216651368) (30, 2, 1002242216651368) (30, 3, 738494264901008) (30, 4, 474746313150648) (30, 5, 280531912316292) (30, 6, 155851062397940) (30, 7, 82336410323440) (30, 8, 41620603020640) (30, 9, 20198233818840) (30, 10, 9425842448792) (30, 11, 4232010895376) (30, 12, 1827459250276) (30, 13, 758201178306) (30, 14, 301758997386) (30, 15, 114955808528) (30, 16, 41802112192) (30, 17, 14460614392) (30, 18, 4739192952) (30, 19, 1464140912) (30, 20, 423830264) (30, 21, 114108148) (30, 22, 28312548) (30, 23, 6399888) (30, 24, 1298528) (30, 25, 231880) (30, 26, 35464) (30, 27, 4464) (30, 28, 434) (30, 29, 29) (30, 30, 1)
};
\end{axis}
\end{tikzpicture}
\caption{The distribution of the depths of each of the leaves (indexed by $0\le r\le 30$) on binary trees of size $30$, given by the coefficients of the polynomial $[z^{30}]B(x,y,z)$.}
\label{fig:distribution30}
\end{figure}
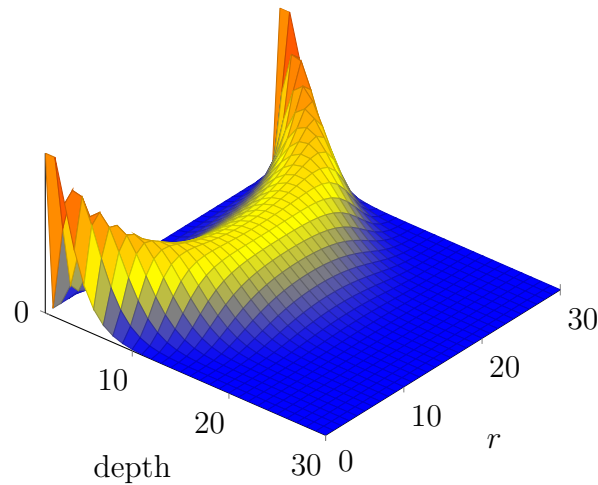

An easy special case is obtained when considering the depth of the leftmost leaf on binary trees. When $r=0$, we have
\begin{equation}\label{eq:x0} B_0(y,z)=B(0,y,z)=\frac{1}{1-yzC(z)}.\end{equation}
There are many well-known statistics on Catalan objects whose distribution is given by this generating function. 
In Section~\ref{sec:bijections} we discuss some of these statistics on Dyck paths and plane trees, and show bijectively how they correspond to each other. 
Leaf depth statistics can be thought of as generalizations of these classical statistics.

\subsection{The average depth of the $r$th leaf} \label{sec:average_depth_binary}

In this section we describe the average depth of each leaf in binary trees of size $n$, and study the asymptotic behavior of these average depths as $n\to\infty$.
The average depths can be computed using equation~\eqref{eq:average}. By Theorem~\ref{thm:B}, letting
\begin{equation} \label{eq:dBxz}
\dB(x,z)\coloneqq\left.\frac{\partial B(x,y,z)}{\partial y}\right|_{y=1}=\frac{zC(z)+xzC(xz)}{\left(1-zC(z)-xzC(xz)\right)^2}=B(x,1,z)(B(x,1,z)-1),
\end{equation}
the coefficient of $x^rz^n$ in $\dB(x,z)$ is the sum, over all binary trees of size $n$, of the depths of the $r$th leaf.
Dividing this coefficient by $c_{n}$, which is the number of such trees, will give us the average depth of this leaf.

A direct combinatorial explanation of equation~\eqref{eq:dBxz} is obtained by interpreting $\dB(x,z)$ as counting binary trees with a distinguished leaf $a_1$ (whose index is marked by the variable $x$) and a distinguished non-root node $a_2$ in the path from $a_1$ to the root. This is because the number of choices for $a_2$ is equal to the depth of $a_1$. By splitting at $a_2$, such a tree can be decomposed as a pair of binary trees, each with a distinguished leaf: the subtree rooted at $a_2$ with the distinguished leaf $a_1$, and the non-empty subtree with the original root and distinguished leaf $a_2$. The index of the leaf $a_1$ in the original tree is the sum of the indices of the distinguished leaves in these two subtrees. Finally, to obtain the product expression in~\eqref{eq:dBxz}, we note that $B(x,1,z)$ is the generating function for binary trees with a distinguished leaf, where $x$ marks the index of this leaf and $z$ marks the size.

Using the notation $$[k]_x=\frac{1-x^k}{1-x}=1+x+x^2+\dots+x^{k-1},$$ this interpretation of $B(x,1,z)$ allows us to rewrite it as
\begin{equation} \label{eq:Bx1z}
B(x,1,z)=\frac{1}{1-zC(z)-xzC(xz)}=\frac{C(z)-xC(xz)}{1-x}=\sum_{n\ge0}c_{n}[n+1]_x z^n.
\end{equation}
Extracting the coefficient of $z^n$ in equation~\eqref{eq:dBxz} and using equation~\eqref{eq:Bx1z}, we get
$$[z^n]\dB(x,z)=\sum_{i=1}^{n}c_{i}c_{n-i}[i+1]_x[n+1-i]_x.$$
To extract now the coefficient of $x^r$, we use the fact that, 
for $0\le i,r\le n$, the coefficient of $x^r$ in $[i+1]_x[n+1-i]_x$ equals $\min(r+1,n+1-r,i+1,n+1-i)$.
It follows that, for $0\le r\le n$,
\begin{align}\nonumber [x^rz^n]\dB(x,z)&=\sum_{i=1}^{n}c_{i}c_{n-i}\min(r+1,n+1-r,i+1,n+1-i)\\
\nonumber &= c_{n+1}-c_{n}+\sum_{i=0}^{n}c_{i}c_{n-i}\min(r,n-r,i,n-i)\\
\label{eq:coefdB_sum}  &=c_{n+1}-c_{n}+2\sum_{i=0}^{r-1}ic_ic_{n-i}+r\sum_{i=r}^{n-r}c_ic_{n-i}\\                                                                                                          
\nonumber &=(r+1)c_{n+1}-c_{n}-2\sum_{i=0}^{r-1}(r-i)c_ic_{n-i}\\
&=-c_{n}+\frac{2(2r+1)(2(n-r)+1)}{(n+1)(n+2)}\binom{2r}{r}\binom{2(n-r)}{n-r}.
\label{eq:coefdB} 
\end{align}
The second equality above uses the well-known recurrence $c_{n+1}=\sum_{i=0}^{n}c_ic_{n-i}$.
In equation~\eqref{eq:coefdB_sum} one can assume, by symmetry, that $0\le r\le n/2$; alternatively, if allowing $n/2< r\le n$, the summation from $r$ to $n-r$ should be interpreted in this case as a summation from $n-r+1$ to $r-1$ with negative sign.
The closed form in equation~\eqref{eq:coefdB} was obtained using Maple, and it can be verified by induction on~$r$, for example by first showing that 
$$2\sum_{i=0}^{r-1}c_ic_{n-i}=c_{n+1}-\frac{n-2r+1}{(n+1)(n+2)}\binom{2r}{r}\binom{2(n-r+1)}{n-r+1}.$$

\begin{figure}[h]
\centering
\begin{tikzpicture}[scale=1.5] 
\draw[->] (-.3,0)--(2.3,0) node[right] {$i$}; 
\draw[->] (0,-.3)--(0,2.3) node[above] {$r$}; 
\draw (0,0) rectangle (2,2);
\draw (0,0)--(2,2);
\draw (0,2) node[left,scale=.7]{$n$}--(2,0)node[below,scale=.7]{$n$};
\draw (1,.4) node{$r$};
\draw (1,1.6) node{$n-r$};
\draw (.4,1) node{$i$};
\draw (1.6,1) node{$n-i$};
\end{tikzpicture}
\caption{A diagram of the values of the function $\min(r,n-r,i,n-i)$ for $0\le i,r\le n$.}
\end{figure}
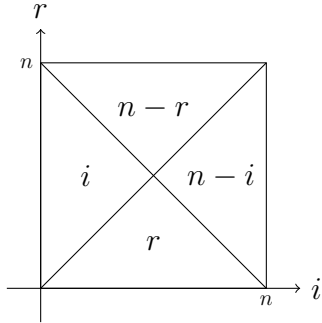

Dividing equation~\eqref{eq:coefdB} by $c_{n}$, we obtain the following formula for the average depth of the $r$th leaf over binary trees of size $n$, which appeared first in~\cite{kirschenhofer_height_1983}, see also~\cite[Thm.~1 \& Tab.~1]{panholzer_descendants_1997}.
Note that, for fixed $r$, the average depth of the $r$th leaf is finite even as $n\to\infty$, despite the fact that the depth of this leaf can take arbitrarily large values.

\begin{theorem}[\cite{kirschenhofer_height_1983,panholzer_descendants_1997}]
\label{thm:average_depths}
For $0\le r\le n$, the average depth of the $r$th leaf over all binary trees of size $n$ equals
\begin{equation}\label{eq:avg_depth}
\frac{2(2r+1)(2(n-r)+1)}{n+2}\cdot\frac{\binom{2r}{r}\binom{2(n-r)}{n-r}}{\binom{2n}{n}}-1.
\end{equation}
As $n\to\infty$, this average is asymptotically equal to
\begin{numcases}{}
   \displaystyle\frac{2r+1}{4^{r-1}}\binom{2r}{r}-1 & if $r$ is fixed, \label{eq:avg_depth_r_fixed}
   \\
   \displaystyle\frac{8}{\sqrt{\pi}}\sqrt{r\left(1-\frac{r}{n}\right)} & if $r=r(n)$ is such that $r\to\infty$ and $n-r\to\infty$. \label{eq:avg_depth_r_inf}
\end{numcases}
\end{theorem}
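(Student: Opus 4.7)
The exact formula~\eqref{eq:avg_depth} will follow immediately from dividing equation~\eqref{eq:coefdB} by $c_n=\binom{2n}{n}/(n+1)$. Indeed, the first term $-c_n$ becomes $-1$, while the $(n+1)$ in the denominator of $c_n$ cancels one of the two $(n+1)$ factors in the denominator of the second term, producing $\binom{2n}{n}$ in the denominator of the ratio in~\eqref{eq:avg_depth}. No new ideas are required at this step.

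For the asymptotic behavior, the plan is to apply Stirling's approximation in the form $\binom{2m}{m}\sim 4^m/\sqrt{\pi m}$ to the central binomial coefficients appearing in~\eqref{eq:avg_depth}, treating the two regimes separately.

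When $r$ is fixed and $n\to\infty$: I would keep $\binom{2r}{r}$ as a constant and approximate only the ratio $\binom{2(n-r)}{n-r}/\binom{2n}{n}$, which is asymptotic to $4^{-r}\sqrt{n/(n-r)}\to 4^{-r}$. The prefactor $2(2r+1)(2(n-r)+1)/(n+2)$ tends to $4(2r+1)$. Combining these limits with $\binom{2r}{r}$ and subtracting $1$ gives $\frac{2r+1}{4^{r-1}}\binom{2r}{r}-1$, confirming~\eqref{eq:avg_depth_r_fixed}. When both $r\to\infty$ and $n-r\to\infty$: Stirling applied to all three coefficients yields
$$\frac{\binom{2r}{r}\binom{2(n-r)}{n-r}}{\binom{2n}{n}}\sim \sqrt{\frac{n}{\pi\,r(n-r)}},$$
while $2(2r+1)(2(n-r)+1)/(n+2)\sim 8r(n-r)/n$ under these assumptions. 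Multiplying gives
$$\frac{8r(n-r)}{n}\cdot\sqrt{\frac{n}{\pi\,r(n-r)}}=\frac{8}{\sqrt{\pi}}\sqrt{r\left(1-\frac{r}{n}\right)},$$
which grows without bound and dominates the $-1$ term, yielding~\eqref{eq:avg_depth_r_inf}.

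There is no real obstacle here: equation~\eqref{eq:coefdB} has already absorbed the combinatorial heavy lifting, and what remains is routine Stirling asymptotics. The only point requiring mild care is to confirm that the multiplicative Stirling errors $1+O(1/m)$ stay uniform as $r$ and $n-r$ grow at possibly different rates, which is automatic since both arguments tend to infinity in the second regime.
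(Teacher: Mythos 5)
Your proposal is correct and follows essentially the same route as the paper: divide equation~\eqref{eq:coefdB} by $c_n$ to get the exact formula, then apply the Stirling-type approximation of the central binomial coefficients to obtain both asymptotic regimes. The paper additionally records an alternative integral-approximation argument for the regime $r\to\infty$, $n-r\to\infty$ (useful later when no closed form is available), but its primary derivation is the one you give.
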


\begin{proof}
By equation~\eqref{eq:average}, the average depth of the $r$th leaf is $\frac{[x^rz^n]\dB(x,z)}{c_n}$, so the first formula follows from equation~\eqref{eq:coefdB}.
The asymptotic expressions in both cases are obtained by letting $n\to\infty$  and using the approximation
\begin{equation}\label{eq:Ckapprox} c_k=\frac{1}{k+1}\binom{2k}{k}\sim \frac{4^k}{\sqrt{\pi k^3}} \end{equation}
for $k\to\infty$, where the notation $f(k)\sim g(k)$ means that $\lim_{k\to\infty} f(k)/g(k)=1$. In the case of fixed $r$, the average depth of the $r$th leaf is finite in the limit as $n\to\infty$, but this is no longer true when $r=r(n)$ is such that $r\to\infty$ and $n-r\to\infty$.

In the latter case, let us include an alternative proof of equation~\eqref{eq:avg_depth_r_inf} that does not rely on the closed form~\eqref{eq:coefdB}, since this method will be useful for other situations where we do not have a closed form. From the expression~\eqref{eq:coefdB_sum} and the approximation~\eqref{eq:Ckapprox}, noting that the errors arising from approximating $c_i$ for small values of $i$ are asymptotically negligible, it follows that, as $r\to\infty$ and $n-r\to\infty$,
\begin{align}\label{eq:integral}\frac{[x^rz^n]\dB(x,z)}{c_{n}}&\sim 3+\frac{n^{3/2}}{\sqrt{\pi}}\left(\sum_{i=1}^{r-1}\frac{2}{\sqrt{i(n-i)^3}}+r \sum_{i=r}^{n-r}\frac{1}{\sqrt{i^3(n-i)^3}}\right)\\
\nonumber &\sim \frac{2 n^{3/2}}{\sqrt{\pi}}\left(\int_{0}^{r}\frac{1}{\sqrt{t(n-t)^3}}\,dt+r \int_{r}^{n/2}\frac{1}{\sqrt{t^3(n-t)^3}}\,dt\right)\\
\nonumber &=\frac{2 n^{3/2}}{\sqrt{\pi}}\left(\frac{2\sqrt{r}}{n\sqrt{n-r}}+r \frac{2(n-2r)}{n^2\sqrt{r(n-r)}}\right)\\
\nonumber &=\frac{8}{\sqrt{\pi}}\sqrt{r\left(1-\frac{r}{n}\right)}.
\qedhere
\end{align}
\end{proof}

The first few values of the sequence of asymptotic average depths given by equation~\eqref{eq:avg_depth_r_fixed} appear in Table~\ref{tab:small_r}, and are plotted on the right of Figure~\ref{fig:dist_binary_smallr}. 
 Letting $r=r(n)=\alpha n$ for some constant $0<\alpha<1$, equation~\eqref{eq:avg_depth_r_inf} implies that the average depth of the $r$th leaf is asymptotically given by
\begin{equation}\label{eq:alpha}
\frac{8}{\sqrt{\pi}}\sqrt{\alpha(1-\alpha)n}.
\end{equation}

\begin{table}[htb]
\centering
\begin{tabular}{c|c|c|c|c|c|c|c|c|c}
& & \multicolumn{8}{c}{$r$} \\ \cline{3-10}
Theorem & Statistic  & 0&1&2&3&4&5&6&7\\ \hline

\ref{thm:average_depths} & \btc  depth of $r$th leaf\\  in binary trees\et & 3 &  5 &  $\frac{13}{2}$ &  $\frac{31}{4}$ & $\frac{283}{32}$ & $\frac{629}{64}$ & $\frac{2747}{256}$ & $\frac{5923}{512}$ \\ \hline

\ref{thm:average_depths_Schroeder} & \btc depth of $r$th leaf \\ in Schr\"oder trees \et &
{\scriptsize \btc $\sqrt{2}$ \\ $+1$ \et} &{\scriptsize \btc $2\sqrt{2}$ \\ $+1$ \et} & {\scriptsize \btc $7\sqrt{2}$ \\ $- 5$ \et} & {\scriptsize \btc $84\sqrt{2}$ \\ $-113$\et} & {\scriptsize \btc $1701\sqrt{2}$ \\ $-2399$\et} & {\scriptsize \btc $40038\sqrt{2}$ \\ $+56615$\et} & {\scriptsize \btc $1015307\sqrt{2}$ \\ $-1435853$\et } & {\scriptsize \btc $27021736\sqrt{2}$ \\ $-38214497$\et } \\ \hline

\ref{thm:average_depths_noncrossing} & \btc depth of node $r$ \\ in noncrossing trees \et &
0 & 2 & $\frac{28}{9}$ & $\frac{962}{243}$ & $\frac{30640}{6561}$ & 
$\frac{312634}{59049}$ & $\frac{28017284}{4782969}$ & $\frac{823239002}{129140163}$ \\ \hline

\ref{thm:average_heights} & \btc height of $r$th vertex\\ in Dyck paths \et &
0 & 1 & $\frac32$ & 2 & $\frac{19}{8}$ & $\frac{11}{4}$ & $\frac{49}{16}$ & $\frac{27}{8}$ 
\\ \hline

\ref{thm:average_height_upstep} & \btc height of $r$th up-step\\ in Dyck paths \et &
& 1 & $\frac{7}{4}$ & $\frac{19}{8}$ & $\frac{187}{64}$ & $\frac{437}{128}$ & $\frac{1979}{512}$ & $\frac{4387}{1024}$
\\ \hline

\ref{thm:average_height_upstep} & \btc height of $r$th down-step\\ in Dyck paths \et &
& 3 & 4 & $\frac{19}{4}$ & $\frac{43}{8}$ & $\frac{379}{64}$ & $\frac{821}{128}$ & $\frac{3515}{512}$
\end{tabular}
\caption{Asymptotic averages as $n\to\infty$ of various statistics $s_r$ for small $r$.}
\label{tab:small_r}
\end{table}

\begin{figure}[h]
\centering
\begin{tikzpicture}[scale=.9]
\begin{axis}[
    axis lines = left,
    xlabel = {depth},
    ymin=0,
    ytick={0,0.05,0.1,0.15,0.2,0.25},
    yticklabels={0,0.05,0.1,0.15,0.2,0.25},    
]
\addplot[
    color=blue!0!red,
    mark=*,
    ]
    coordinates {
    (1, 1/4) (2, 1/4) (3, 3/16) (4, 1/8) (5, 5/64) (6, 3/64) (7, 7/256) (8, 1/64) (9, 9/1024) (10, 5/1024) (11, 11/4096) (12, 3/2048) (13, 13/16384) (14, 7/16384) (15, 15/65536) (16, 1/8192) (17, 17/262144) (18, 9/262144) (19, 19/1048576) (20, 5/524288)
    };
   \addlegendentry{$r=0$}
    
\addplot[
    color=blue!14.3!red,
    mark=*,
    ]
    coordinates {
(1, 0) (2, 1/8) (3, 3/16) (4, 3/16) (5, 5/32) (6, 15/128) (7, 21/256) (8, 7/128) (9, 9/256) (10, 45/2048) (11, 55/4096) (12, 33/4096) (13, 39/8192) (14, 91/32768) (15, 105/65536) (16, 15/16384) (17, 17/32768) (18, 153/524288) (19, 171/1048576) (20, 95/1048576)
    };
    \addlegendentry{$r=1$}
    
\addplot[
    color=blue!28.6!red,
    mark=*,
    ]
    coordinates {
(1, 0) (2, 1/32) (3, 3/32) (4, 9/64) (5, 5/32) (6, 75/512) (7, 63/512) (8, 49/512) (9, 9/128) (10, 405/8192) (11, 275/8192) (12, 363/16384) (13, 117/8192) (14, 1183/131072) (15, 735/131072) (16, 225/65536) (17, 17/8192) (18, 2601/2097152) (19, 1539/2097152) (20, 1805/4194304) };
    \addlegendentry{$r=2$}
    
\addplot[
    color=blue!42.9!red,
    mark=*,
    ]
    coordinates {
(1, 0) (2, 1/64) (3, 3/64) (4, 11/128) (5, 15/128) (6, 135/1024) (7, 133/1024) (8, 119/1024) (9, 99/1024) (10, 1245/16384) (11, 935/16384) (12, 1353/32768) (13, 949/32768) (14, 5187/262144) (15, 3465/262144) (16, 1135/131072) (17, 731/131072) (18, 14841/4194304) (19, 9291/4194304) (20, 11495/8388608)
    };
   \addlegendentry{$r=3$}
    
\addplot[
    color=blue!57.1!red,
    mark=*,
    ]
    coordinates {
(1, 0) (2, 5/512) (3, 15/512) (4, 57/1024) (5, 85/1024) (6, 855/8192) (7, 945/8192) (8, 945/8192) (9, 873/8192) (10, 12105/131072) (11, 9955/131072) (12, 15675/262144) (13, 11895/262144) (14, 69979/2097152) (15, 50085/2097152) (16, 17505/1048576) (17, 11985/1048576) (18, 257805/33554432) (19, 170487/33554432) (20, 222205/67108864)
    };
    \addlegendentry{$r=4$}

\addplot[
    color=blue!71.4!red,
    mark=*,
    ]
    coordinates {
(1, 0) (2, 7/1024) (3, 21/1024) (4, 81/2048) (5, 125/2048) (6, 1329/16384) (7, 1575/16384) (8, 1701/16384) (9, 1701/16384) (10, 25515/262144) (11, 22649/262144) (12, 38379/524288) (13, 31239/524288) (14, 196469/4194304) (15, 149835/4194304) (16, 55629/2097152) (17, 40341/2097152) (18, 916623/67108864) (19, 638685/67108864) (20, 875045/134217728)
    };
    \addlegendentry{$r=5$}
    
\addplot[
    color=blue!85.7!red,
    mark=*,
    ]
    coordinates {
(1, 0) (2, 21/4096) (3, 63/4096) (4, 245/8192) (5, 385/8192) (6, 4215/65536) (7, 5201/65536) (8, 5901/65536) (9, 6237/65536) (10, 99225/1048576) (11, 93555/1048576) (12, 168399/2097152) (13, 145483/2097152) (14, 969787/16777216) (15, 782565/16777216) (16, 306845/8388608) (17, 234549/8388608) (18, 5606685/268435456) (19, 4102119/268435456) (20, 5890665/536870912)
    };
    \addlegendentry{$r=6$}
    
\addplot[
    color=blue!100!red,
    mark=*,
    ]
    coordinates {
(1, 0) (2, 33/8192) (3, 99/8192) (4, 387/16384) (5, 615/16384) (6, 6855/131072) (7, 8673/131072) (8, 10159/131072) (9, 11151/131072) (10, 185085/2097152) (11, 182655/2097152) (12, 344817/4194304) (13, 312741/4194304) (14, 2189187/33554432) (15, 1854525/33554432) (16, 762855/16777216) (17, 611167/16777216) (18, 15295257/536870912) (19, 11702043/536870912) (20, 17550015/1073741824)
    };
    \addlegendentry{$r=7$}
    
    \addplot[color=blue!0!red,mark=x,mark options={scale=2}]   coordinates {(3,0)};
    \addplot[color=blue!14.3!red,mark=x,mark options={scale=2}]   coordinates {(5,0)};
    \addplot[color=blue!28.6!red,mark=x,mark options={scale=2}]   coordinates {(13/2,0)};
    \addplot[color=blue!42.9!red,mark=x,mark options={scale=2}]   coordinates {(31/4,0)};
    \addplot[color=blue!57.1!red,mark=x,mark options={scale=2}]   coordinates {(283/32,0)};
    \addplot[color=blue!71.4!red,mark=x,mark options={scale=2}]   coordinates {(629/64,0)};
    \addplot[color=blue!85.7!red,mark=x,mark options={scale=2}]   coordinates {(2747/256,0)};
    \addplot[color=blue!100!red,mark=x,mark options={scale=2}]   coordinates {(5923/512,0)};
\end{axis}
\end{tikzpicture}
\quad 
\begin{tikzpicture}[scale=.9]
\begin{axis}[
    axis lines = left,
    xlabel = \(r\),
    ylabel = {average depth of $r$th leaf},
    ymin=0,
    colormap={my colormap}{
                color=(red)
                color=(blue)
            },
]
\addplot[
scatter,
point meta=x,
    mark=x,
    mark options={scale=2},
    only marks,
    ]
    coordinates {
    (0,3)(1,5)(2,13/2)(3,31/4)(4,283/32)(5,629/64)(6,2747/256)(7,5923/512)
    };
\end{axis}
\end{tikzpicture}
\caption{The limiting distribution (left) and average (right) of the depth of $r$th leaf in binary trees of size $n\to\infty$, for $0\le r\le 7$.}
\label{fig:dist_binary_smallr}
\end{figure}
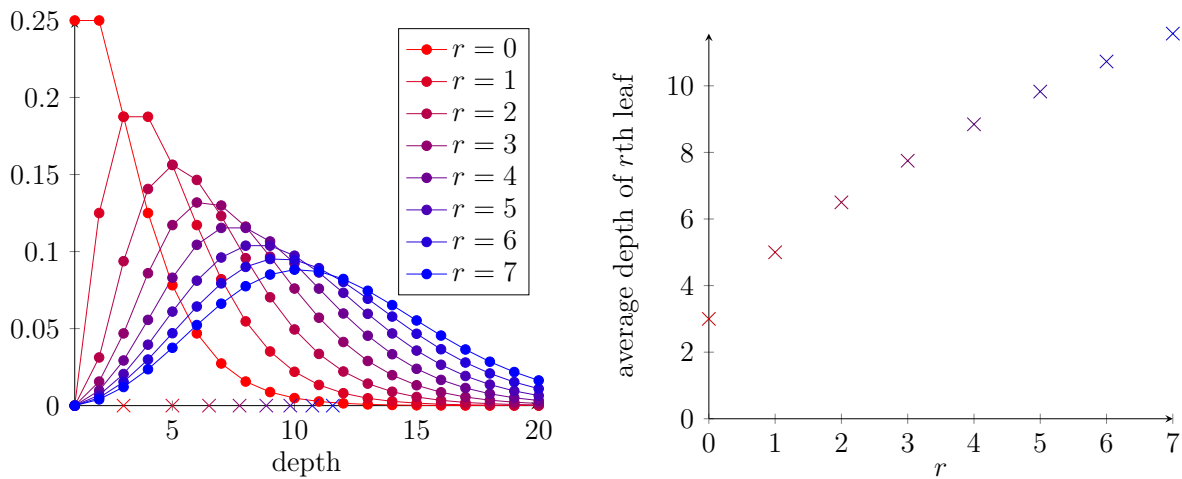

The average depth of each leaf in binary trees of size $n=20$, computed using equation~\eqref{eq:avg_depth}, is shown on the left of Figure~\ref{fig:average_depths_large_r}. 
The limit shape as $n\to\infty$, plotted on the right of the figure, is given by~\eqref{eq:alpha}, where the $x$-axis represents $\alpha=r/n$ and the $y$-axis has been normalized by dividing by the average depth of all leaves.

\begin{figure}[h]
\centering
\begin{tikzpicture}[scale=.9]
\begin{axis}[
	title={$n=20$},
    axis lines = left,
    xlabel = \(r\),
    ylabel = {average depth of $r$th leaf},
    ymin=0, 
]
\addplot[
   mark=*,
    only marks,
    ]
    coordinates {
(0, 30/11) (1, 49/11) (2, 807/143) (3, 34609/5291) (4, 38314/5291) (5, 41221/5291) (6, 103663/12617) (7, 3122507/365893) (8, 3203257/365893) (9, 9752537/1097679) (10, 34150511/3825245) (11, 9752537/1097679) (12, 3203257/365893) (13, 3122507/365893) (14, 103663/12617) (15, 41221/5291) (16, 38314/5291) (17, 34609/5291) (18, 807/143) (19, 49/11) (20, 30/11)
    };
\end{axis}
\end{tikzpicture}\quad
\begin{tikzpicture}[scale=.9]
\begin{axis}[
	title={$n\to\infty$},
    axis lines = left,
    xlabel = \(r/n\),
    ylabel = {normalized average depth of $r$th leaf},
    ymin=0,
]
\addplot [
    domain=0:1, 
    samples=100, 
]
{8*sqrt(x*(1-x))/3.14159265358};
\end{axis}
\end{tikzpicture}
\caption{The average depth of each leaf in binary trees of size $n=20$ (left), and in the limit as $n\to\infty$, normalized by dividing by $\sqrt{\pi n}$ (right).}
\label{fig:average_depths_large_r}
\end{figure}
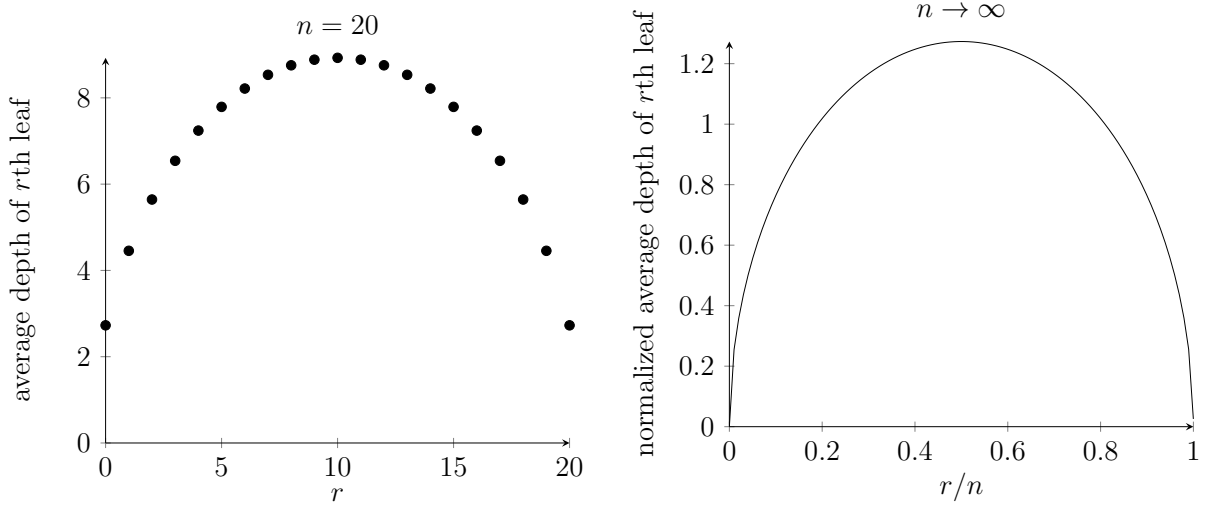

The average depth of a uniformly random leaf can be obtained by integrating equation~\eqref{eq:alpha} for $\alpha$ between $0$ and $1$, or more directly 
by setting $x=1$ in $\dB(x,z)$. Indeed, the sum of the depths of all the leaves over all binary trees of size $n$ is
\begin{equation}\label{eq:sum_depths}
[z^n]\dB(1,z)=[z^n]\frac{1-\sqrt{1-4z}}{1-4z}=4^n-\binom{2n}{n},
\end{equation}
and so the average depth of a random leaf equals
\begin{equation}\label{eq:average_random_leaf}
\frac{[z^n]\dB(1,z)}{(n+1)c_{n}}=\frac{4^{n}}{\binom{2n}{n}}-1\sim\sqrt{\pi n}.
\end{equation}

In biology, the sum of the leaf depths of a tree is known as the {\em Sackin index}, and it is used as a measure of imbalance of phylogenetic trees. The average of this statistic over binary trees of size $n$ is obtained by dividing equation~\eqref{eq:sum_depths} by $c_n$. Different proofs of this expression have been given in~\cite{mir_new_2013,king_simple_2021}. It is worth noting that an arguably more direct proof, which does not require computing $\dB(x,z)$ or manipulating summations as in~\cite{mir_new_2013,king_simple_2021}, can be obtained using standard techniques as follows. Let $K(u,z)$ be the generating function for binary trees where $u$ marks the sum of the leaf depths and $z$ marks the size. The usual decomposition yields the equation
 $$K(u,z) = 1 + u^2 z K(u,uz)^2.$$
 Differentiating with respect to $u$, setting $u=1$, and noting that $K(1,z)=C(z)$, one can deduce that
$$\left.\frac{\partial K(u,z)}{\partial u}\right|_{u=1}=\frac{1-\sqrt{1-4z}}{1-4z},$$
and extract its coefficients as in equation~\eqref{eq:sum_depths}.

The asymptotic behavior from equation~\eqref{eq:average_random_leaf} is the same as if one considers the average depth of a random node rather than a random leaf; see~\cite[Prop.\ VII.3]{flajolet_analytic_2009}. For comparison, the average height of binary trees, defined as the maximum depth of any leaf, behaves like $2\sqrt{\pi n}$; see~\cite[Thm.\ B]{flajolet_average_1982}.

Higher moments of the distribution of the depth of the $r$th leaf in binary trees have been computed in~\cite{panholzer_moments_2002}.

\subsection{The distribution of the depth of the $r$th leaf}\label{sec:distribution_binary}

Gutjahr and Pflug~\cite[Cor.~1]{gutjahr_asymptotic_1992} showed that, when $r=\alpha n$ for some $\alpha\in(0,1)$ and $n\to\infty$, the normalized limiting distribution of the depth of the $r$th leaf is a Maxwell distribution. This result was generalized by Drmota~\cite{drmota_distribution_1994} to any simply generated family of trees, by using double Hankel contour integration.

Here we show that, when $r$ is fixed, the distribution of the depth of the $r$th leaf converges to a discrete law. We obtain this limiting distribution by using singularity analysis on the multivariate generating function $B(x,y,z)$.
In the next statement, trees are chosen uniformly at random from the set of all complete binary trees of size~$n$.

\begin{theorem}\label{thm:distribution_binary}
Let $r\ge0$ and $d\ge1$. The limit as $n\to\infty$ of the probability that the $r$th leaf in a random binary tree of size $n$ has depth $d$ exists. Denoting this limit by $p_{r,d}$, we have
$$\sum_{r\ge0}\sum_{d\ge1} p_{r,d}\, x^r y^d = \frac{y}{(2-2y+y\sqrt{1-x})^2}.$$
\end{theorem}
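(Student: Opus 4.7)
The plan is to apply singularity analysis to $B(x,y,z)$ viewed as a function of $z$, with $x$ and $y$ regarded as complex parameters confined to a small polydisk around the origin, and then to recover $p_{r,d}$ by Cauchy's formula in $x$ and $y$.

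From Theorem~\ref{thm:B},
$$B(x,y,z)=\frac{2}{2-2y+y\sqrt{1-4z}+y\sqrt{1-4xz}}.$$
The two square roots have branch points at $z=1/4$ and $z=1/(4x)$. Restricting $(x,y)$ to a small polydisk around $(0,0)$, the second singularity lies strictly outside the disk $|z|\le 1/4$, and $\sqrt{1-4xz}$ is analytic at $z=1/4$ with value $\sqrt{1-x}$ there. Setting $A(x,y)\coloneqq 2-2y+y\sqrt{1-x}$, we have $A(0,0)=2$, so for $(x,y)$ in a sufficiently small polydisk the denominator remains bounded away from zero on a $\Delta$-domain at $z=1/4$, making $z=1/4$ the unique dominant singularity of $B$.

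Substituting the local expansion $\sqrt{1-4xz}=\sqrt{1-x}+\mathcal{O}(1-4z)$ into the denominator and then expanding the reciprocal geometrically in the small quantity $y\sqrt{1-4z}+\mathcal{O}(1-4z)$, we obtain the singular expansion
$$B(x,y,z) = H(x,y,z) - \frac{2y}{A(x,y)^2}\sqrt{1-4z} + \mathcal{O}\bigl((1-4z)^{3/2}\bigr),$$
where $H(x,y,z)$ is analytic at $z=1/4$. Since $[z^n]\sqrt{1-4z}=-\tfrac{1}{2n-1}\binom{2n}{n}\sim -\tfrac12 c_n$, the transfer theorem of singularity analysis yields
$$\frac{[z^n]B(x,y,z)}{c_n}\longrightarrow \frac{y}{A(x,y)^2}=\frac{y}{(2-2y+y\sqrt{1-x})^2}$$
as $n\to\infty$, uniformly for $(x,y)$ in a small closed polydisk around $(0,0)$.

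Finally, uniform convergence on such a polydisk allows coefficient extraction in $x$ and $y$ via Cauchy's formula: for each fixed $r,d\ge 0$,
$$p_{r,d}=\lim_{n\to\infty}\frac{[x^r y^d z^n]B(x,y,z)}{c_n}=[x^r y^d]\,\frac{y}{(2-2y+y\sqrt{1-x})^2},$$
which is equivalent to the claimed identity of bivariate generating functions. The main technical obstacle is the uniformity in the parameters: one must check that $B(x,y,z)$ admits a common $\Delta$-domain of analyticity at $z=1/4$ for $(x,y)$ ranging over a polydisk around $(0,0)$, and that the error term in the singular expansion is uniformly $\mathcal{O}((1-4z)^{3/2})$ on that polydisk, so that the parametric version of the transfer theorem applies and the resulting asymptotic is strong enough to justify the exchange of limits in the Cauchy integrals.
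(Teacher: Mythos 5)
Your argument is correct and follows essentially the same route as the paper: a singular expansion of $B(x,y,z)$ in $z$ at the dominant singularity $z=1/4$ with $(x,y)$ as auxiliary parameters, transfer, and normalization by $c_n$. The only cosmetic difference is that the paper first substitutes $x=t/z$ so that the second radical becomes $\sqrt{1-4t}$, independent of $z$, whereas you keep $x$ in a small polydisk and observe that $\sqrt{1-4xz}$ is analytic at $z=1/4$ with value $\sqrt{1-x}$; both versions yield the same singular coefficient $-2y/(2-2y+y\sqrt{1-x})^2$ and both rest on the uniformity of the transfer theorem in the parameters, which you correctly identify as the point needing care.
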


\begin{proof}
Making the change of variable $x=t/z$ in the expression for $B(x,y,z)$ given in Theorem~\ref{thm:B}, and taking its singular expansion, as a function of $z$, at $z=1/4$, we have
\begin{align*}B(t/z,y,z)&=\frac{2}{2-2y+y\sqrt{1-4z}+y\sqrt{1-4t}}\\
&=\frac{2}{2-2y+y\sqrt{1-4t}}-\frac{2y}{\left(2-2y+y\sqrt{1-4t}\right)^2}\sqrt{1-4z}+o(\sqrt{1-4z}).\end{align*}
Using tools from singularity analysis in one variable~\cite{flajolet_analytic_2009}, it follows that, when $r$ is fixed and $n\to\infty$,
$$[x^rz^n]B(x,y,z)=[t^rz^{n-r}]B(t/z,y,z)\sim [t^r]\frac{y}{\left(2-2y+y\sqrt{1-4t}\right)^2}\frac{4^{n-r}}{\sqrt{\pi n^3}}.$$
Dividing by $[x^rz^n]B(x,1,z)=c_n\sim 4^n/\sqrt{\pi n^3}$ (by equation~\eqref{eq:Ckapprox}), we obtain the probability generating function of the limiting distribution of the depth of the $r$th leaf:
$$\sum_{d\ge1}p_{r,d}\,y^d=\frac{[x^rz^n]B(x,y,z)}{c_n}\sim \frac{1}{4^r}[t^r]\frac{y}{\left(2-2y+y\sqrt{1-4t}\right)^2}=[x^r]\frac{y}{\left(2-2y+y\sqrt{1-x}\right)^2}.\qedhere $$
\end{proof}

In the special case of the leftmost leaf ($r=0$) in binary trees, considered in equation~\eqref{eq:x0},
the limiting distribution of its depth as $n\to\infty$ is given by the probability generating function 
$$[x^0]\frac{y}{\left(2-2y+y\sqrt{1-x}\right)^2}=\frac{y}{(2-y)^2},$$
which describes a (shifted) negative binomial $\NB(2,1/2)$.

For the next leaf ($r=1$), the limiting distribution of its depth is given by the probability generating function 
$$[x^1]\frac{y}{\left(2-2y+y\sqrt{1-x}\right)^2}=\frac{y^2}{(2-y)^3},$$
which describes a (shifted) negative binomial $\NB(3,1/2)$. And for $r=2$, it is given by
$$[x^2]\frac{y}{\left(2-2y+y\sqrt{1-x}\right)^2}=\frac{y^2(1+y)}{(2- y)^4}.$$ The limiting distribution of the depth of $r$th leaf for small values of~$r$ is plotted on the left of Figure~\ref{fig:dist_binary_smallr}.

Differentiating the generating function in Theorem~\ref{thm:distribution_binary} with respect to $y$ and evaluating at $y=1$, we 
obtain 
$$
\frac{4}{(1-x)^{3/2}}-\frac{1}{1-x}.
$$
Extracting the coefficient of $x^r$ we recover the expression in equation~\eqref{eq:avg_depth_r_fixed} for the average depth of the $r$th leaf in the limit as $n\to\infty$, for fixed~$r$.

\subsection{Leaf abscissas}\label{sec:abscissas}

Trees are often drawn on the plane so that nodes at depth $d$ have the ordinate equal to $d$ (or to $-d$, when using our convention whereby the root is at the top).
There are several possible ways to specify the abscissas of the nodes. One possibility, called the {\em natural embedding} by Bousquet-M\'elou~\cite{bousquet-melou_limit_2006}, is to place the left (resp.\ right) child of each node one unit to the 
left (resp.\ right) of the parent, with the root being at the origin; see Figure~\ref{fig:abscissas}.

\begin{figure}[htb]
\centering
\begin{tikzpicture}[scale=.7]
\foreach \i in {-3,...,1} {
\draw (\i,.5) node[above,scale=.8] {$\i$};
\draw[thin,dotted] (\i,.5)--(\i,-4.5);
}
\draw (0,0)--(0,.5);
\fill (0,0) circle (.1)  coordinate (v0);
\fill (-1,-1) circle (.1) coordinate (v1);
\fill (1,-1) circle (.1) coordinate (v2);
\fill (-2,-2) circle (.1) coordinate (v3);
\fill (0,-2) circle (.1) coordinate (v4);
\fill (-3,-3) circle (.1) coordinate (v5);
\fill (-1,-2.85) circle (.1) coordinate (v6);
\fill (-1,-3.15) circle (.1) coordinate (v7);
\fill (1,-3) circle (.1) coordinate (v8);
\fill (-2,-4) circle (.1) coordinate (v9);
\fill (0,-4) circle (.1) coordinate (v10);
\draw (v2)--(v0)--(v1)--(v3)--(v5);
\draw (v3)--(v6);
\draw (v1)--(v4)--(v7)--(v9);
\draw (v4)--(v8);
\draw (v7)--(v10);
\end{tikzpicture}
\caption{The natural embedding a binary tree. The abscissas of the six leaves, in our usual indexing order, are $-3,-1,-2,0,1,1$.}
\label{fig:abscissas}
\end{figure}
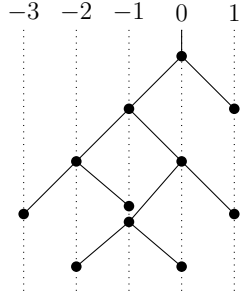

The ordinate of each node, and in particular each leaf, is given by its depth, which is the parameter that we studied in Section~\ref{sec:B}. 
Let us now refine Theorem~\ref{thm:B} by considering the statistic that assigns to each leaf its abscissa in the natural embedding. 
We introduce a new variable $u$ so that the coefficient of $u^ax^ry^dz^n$ in
$B(u,x,y,z)$ is the number of binary trees of size $n$ where the $r$th (as defined in Section~\ref{sec:B}) has depth $d$ and abscissa $a$. Note that leaf abscissas in binary trees of size $n$ can range between $-n$ and $n$. One can easily refine Theorem~\ref{thm:B} as follows.

\begin{theorem}\label{thm:Bv}
$$B(u,x,y,z)=\frac{1}{1-\dfrac{yz}{u} C(z)-uxyzC(xz)}.$$
\end{theorem}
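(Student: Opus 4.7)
The plan is to mimic the decomposition from the proof of Theorem~\ref{thm:B} but enhance it to keep track of the abscissa of the distinguished leaf. I interpret $B(u,x,y,z)$ as enumerating binary trees with a distinguished leaf, with $u$, $x$, $y$, $z$ marking the abscissa, index, depth, and size, respectively. The one-node tree contributes $1$, since its only leaf sits at the origin.

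For trees with two or more nodes, I decompose at the root into left and right subtrees $T_L, T_R$. The key geometric observation is that in the natural embedding, if one views $T_L$ and $T_R$ as standalone trees rooted at the origin, then attaching $T_L$ as the left subtree of the new root translates its vertices by $(-1,-1)$, while attaching $T_R$ on the right translates its vertices by $(1,-1)$. So, relative to the standalone subtree, the distinguished leaf's abscissa decreases by $1$ if it lies in $T_L$, and increases by $1$ if it lies in $T_R$.

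Combining this with the leaf-indexing and depth bookkeeping already used for Theorem~\ref{thm:B}, I expect to obtain the functional equation
$$B(u,x,y,z)=1+\frac{yz}{u}\,C(z)\,B(u,x,y,z)+uxyz\,C(xz)\,B(u,x,y,z),$$
where the factor $1/u$ (respectively $u$) encodes the abscissa shift when descending into the left (respectively right) subtree, and the factor $x C(xz)$ in the second case accounts, as before, for the shift in the distinguished leaf's index by the number of leaves in $T_L$. Solving this linear equation for $B(u,x,y,z)$ yields the stated closed form, and setting $u=1$ recovers Theorem~\ref{thm:B} as a sanity check.

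The only real obstacle is careful bookkeeping: correctly orienting the signs of the abscissa shifts (so that $u^{-1}$ attaches to the left-branch term and $u$ to the right-branch term) and ensuring that the $x$-factor arising from the leaf-index shift appears only in the right-subtree case. Once these are in place, the argument is a routine extension of the previous one.
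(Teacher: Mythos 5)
Your proof is correct and follows exactly the paper's approach: the same root decomposition, the same functional equation with the factor $yz/u$ for the left subtree and $uxyz\,C(xz)$ for the right, and the same abscissa-shift observation. Nothing further is needed.
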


\begin{proof}
As in the proof of Theorem~\ref{thm:B}, thinking of $B(u,x,y,z)$ as the generating function for binary trees with a distinguished leaf, we obtain the equation
\begin{equation}\label{eq:Bv} B(u,x,y,z)=1+\frac{yz}{u} B(u,x,y,z)C(z)+uxyzC(xz)B(u,x,y,z).\end{equation}
Indeed, if the distinguished leaf is in the left (resp.\ right) subtree of the root, then its abscissa in the main tree equals its abscissa in the subtree --- i.e., when setting the root of this subtree to have abscissa $0$ --- minus (resp.\ plus) one.
Solving~\eqref{eq:Bv} for $B(u,x,y,z)$ gives the stated expression.
\end{proof}

To find the average abscissa of each leaf in binary trees of size $n$, we set $y=1$ and compute
$$\left.\frac{\partial B(u,x,1,z)}{\partial u}\right|_{u=1}=\frac{x(1-3z-xz)C(xz)-(1-z-3xz)C(z)}{z(1-x)^2}+\frac{1}{z(1-x)},$$
after some simplifications. Extracting the coefficient of $x^rz^n$ and simplifying again, we get
$$[x^rz^n]\left.\frac{\partial B(u,x,1,z)}{\partial u}\right|_{u=1}=\frac{3c_{n}}{n+2}(2r-n).$$
Dividing by $c_{n}$, we deduce that the average abscissa of the $r$th leaf equals
$$\frac{6r-3n}{n+2}.$$
For each $n$, this is a linear function of $r$ whose range is inside the $(-3,3)$ interval.

Combining this with equation~\eqref{eq:alpha}, it follows that if $r=r(n)=\alpha n$ for some constant $\alpha$, then, as $n\to\infty$, the
expected coordinates of the $r$th leaf of a random binary tree in the natural embedding are asymptotically given by
$$\left(6\alpha-3,\frac{8}{\sqrt{\pi}}\sqrt{\alpha(1-\alpha)n}\right).$$

\section{Dyck paths}
\label{sec:Dyck}

A {\em Dyck path} of semilength $n$ is a lattice path in $\mathbb{Z}^2$ with steps $\uu=(1,1)$ and $\dd=(1,-1)$ that starts at $(0,0)$, ends at $(2n,0)$, and never goes below the $x$-axis. Let $\D_n$ denote the set of Dyck paths of semilength $n$. It is well known that $|\D_n|=c_n$.

\subsection{Enumeration with respect to vertex heights}
\label{sec:Dyck_vertices}

In this section we enumerate Dyck paths while keeping track of the coordinates of their vertices, by using an argument similar to the one that we used to enumerate binary trees with respect to the depths of their leaves. For a path $D\in\D_n$ denote by $V(D)$ its set of vertices, given in Cartesian coordinates. In particular, $|V(D)|=2n+1$.
We are interested in the generating function
$$D(x,y,z)=\sum_{n\ge0}\sum_{D\in\D_n}\sum_{(a,b)\in V(D)} x^ay^b z^n.$$
For example, the Dyck path in Figure~\ref{fig:Dyck} contributes $(1+xy+x^2+x^3y+x^4y^2+x^5y+x^6y^2+x^7y+x^8)z^4$ to the generating function.

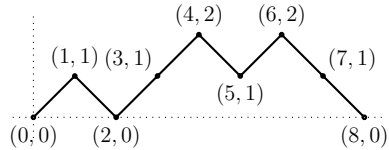
\begin{figure}[h]
\centering
\begin{tikzpicture}[scale=.55]
\draw[dotted] (-.5,0)--(8.5,0);
\draw[dotted] (0,-.5)--(0,2.5);
\draw[thick](0,0) circle(1.2pt) \up\dn\up\up\dn\up\dn\dn;
\foreach \i in {(0,0),(2,0),(5,1),(8,0)}{
\draw \i node[below,scale=.7] {$\i$};
}
\foreach \i in {(1,1),(4,2),(6,2)}{
\draw \i node[above,scale=.7] {$\i$};
}
\foreach \i in {(3,1)}{
\draw \i node[above left=-1pt,scale=.7] {$\i$};
}
\foreach \i in {(7,1)}{
\draw \i node[above right=-1pt,scale=.7] {$\i$};
}
\end{tikzpicture}
\caption{A Dyck path and the coordinates of its vertices.}
\label{fig:Dyck}
\end{figure}

\begin{theorem}\label{thm:D}
$$D(x,y,z)=\frac{C(z)}{1-xyzC(z)-x^2zC(x^2z)}=\frac{1-\sqrt{1-4z}}{z\left(1-xy+xy\sqrt{1-4z}+\sqrt{1-4x^2z}\right)}.$$
\end{theorem}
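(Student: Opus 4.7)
Following the approach used for Theorem~\ref{thm:B}, I would interpret $D(x,y,z)$ as the generating function for pairs $(P,v)$ where $P\in\D_n$ is a Dyck path and $v\in V(P)$ is a distinguished vertex, with $x,y$ marking the coordinates of $v$ and $z$ marking the semilength. The goal is to derive and solve a functional equation via the first-return decomposition, which writes every non-empty $P$ uniquely as $P=\uu P_1 \dd P_2$ with $P_1\in\D_{k_1}$, $P_2\in\D_{k_2}$, and $n=k_1+k_2+1$.

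The key combinatorial step is to partition the $2n+1$ vertices of a non-empty $P$ into three disjoint groups: the vertex $(0,0)$; the $2k_1+1$ vertices $(a+1,b+1)$ for $(a,b)\in V(P_1)$, which include $(1,1)$ and the top $(2k_1+1,1)$ of the first return; and the $2k_2+1$ vertices $(a+2k_1+2,b)$ for $(a,b)\in V(P_2)$, which include the bottom $(2k_1+2,0)$ of the first return. Case analysis on which group contains $v$ then yields three contributions to $D(x,y,z)$. When $v=(0,0)$, $P$ is an arbitrary Dyck path (possibly empty), contributing $C(z)$. When $v$ lies in the $P_1$-group, the shift $(a,b)\mapsto(a+1,b+1)$ gives a factor $xy$, the $\uu\dd$ pair gives a factor $z$, the pair $(P_1,(a,b))$ gives $D(x,y,z)$, and $P_2$ contributes $C(z)$, for a total of $xyz\,C(z)\,D(x,y,z)$. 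When $v$ lies in the $P_2$-group, summing over $P_1\in\D_{k_1}$ gives $\sum_{k_1}c_{k_1}\,x^{2k_1+2}\,z^{k_1+1}=x^2z\,C(x^2z)$, and $(P_2,(a,b))$ contributes $D(x,y,z)$, for a total of $x^2z\,C(x^2z)\,D(x,y,z)$. Combining the three contributions yields
\[ D(x,y,z)=C(z)+xyz\,C(z)\,D(x,y,z)+x^2z\,C(x^2z)\,D(x,y,z), \]
which solves to give the first claimed formula.

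The second formula then follows by substituting $C(z)=\tfrac{1-\sqrt{1-4z}}{2z}$ and $C(x^2z)=\tfrac{1-\sqrt{1-4x^2z}}{2x^2z}$ into the denominator, clearing a common factor of $2z$, and simplifying. The only real obstacle is the bookkeeping in the partition of $V(P)$: one must verify that every vertex is in exactly one group, which hinges on correctly assigning the two vertices $(2k_1+1,1)$ and $(2k_1+2,0)$ on the boundary of the first return (the former to the $P_1$-group, the latter to the $P_2$-group). Once this partition is established, the functional equation and its solution follow mechanically.
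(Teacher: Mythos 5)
Your proposal is correct and follows essentially the same argument as the paper: the first-return decomposition $P=\uu P_1\dd P_2$, a three-way case analysis on the location of the distinguished vertex, and the resulting functional equation $D=C(z)+xyz\,C(z)\,D+x^2z\,C(x^2z)\,D$. Your explicit bookkeeping of the vertex partition (assigning $(2k_1+1,1)$ to the $P_1$-group and $(2k_1+2,0)$ to the $P_2$-group) is a correct and slightly more careful version of what the paper leaves implicit.
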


\begin{proof}
One can think of $D(x,y,z)$ as the generating function for Dyck paths with a distinguished vertex, where variables $x$ and $y$ mark the coordinates of such vertex. We claim that $D(x,y,z)$ satisfies the equation
\begin{equation}\label{eq:D} D(x,y,z)=C(z)+xyzD(x,y,z)C(z)+x^2zC(x^2z)D(x,y,z).\end{equation}
Indeed, Dyck paths whose distinguished vertex is $(0,0)$ contribute $C(z)$ to the generating function.
Any other Dyck path $K$ can be decomposed uniquely as $K=\uu K_1\dd K_2$, where $K_1$ and $K_2$ are Dyck paths, and the $\dd$ between them is the first step of $K$ that returns to the $x$-axis. Paths $K$ whose distinguished vertex is a vertex of $K_1$ contribute
$xyzD(x,y,z)C(z)$, since both coordinates of the distinguished vertex increase by one when viewed as a vertex of $K$ rather than as vertex of $K_1$.
On the other hand, paths $K$ whose distinguished vertex is a vertex of $K_2$ contribute
$x^2zC(x^2z)D(x,y,z)$, since the $x$-coordinate of the distinguished vertex increases by the length of $K_1$ plus two when viewed as a vertex of $K$ rather than as a vertex of $K_2$.

Solving~\eqref{eq:D} for $D(x,y,z)$ gives the stated expression.
\end{proof}

For example, the coefficient of $z^3$ in $D(x,y,z)$ is the polynomial
$$5+5yx+(2+3y^2)x^2+(4y+y^3)x^3+(2+3y^2)x^4+5yx^5+5x^6,$$
where the coefficient of $x^r$, for $0\le r\le 6$, describes the distribution of the heights of the $r$th vertex (starting from $0$) in the five paths in Figure~\ref{fig:Dyck_3}.

\begin{figure}[h]
\centering
\begin{tikzpicture}[scale=.44]
\draw[dotted] (-.5,0)--(6.5,0);
\draw[dotted] (0,-.5)--(0,2.5);
\draw[thick](0,0) circle(1.2pt) \up\dn\up\dn\up\dn;
\begin{scope}[shift={(7.5,0)}]
\draw[dotted] (-.5,0)--(6.5,0);
\draw[dotted] (0,-.5)--(0,2.5);
\draw[thick](0,0) circle(1.2pt) \up\dn\up\up\dn\dn;
\end{scope}
\begin{scope}[shift={(15,0)}]
\draw[dotted] (-.5,0)--(6.5,0);
\draw[dotted] (0,-.5)--(0,2.5);
\draw[thick](0,0) circle(1.2pt) \up\up\dn\dn\up\dn;
\end{scope}
\begin{scope}[shift={(22.5,0)}]
\draw[dotted] (-.5,0)--(6.5,0);
\draw[dotted] (0,-.5)--(0,2.5);
\draw[thick](0,0) circle(1.2pt) \up\up\dn\up\dn\dn;
\end{scope}
\begin{scope}[shift={(30,0)}]
\draw[dotted] (-.5,0)--(6.5,0);
\draw[dotted] (0,-.5)--(0,2.5);
\draw[thick](0,0) circle(1.2pt) \up\up\up\dn\dn\dn;
\end{scope}
\end{tikzpicture}
\caption{The five paths in $\D_3$.}
\label{fig:Dyck_3}
\end{figure}
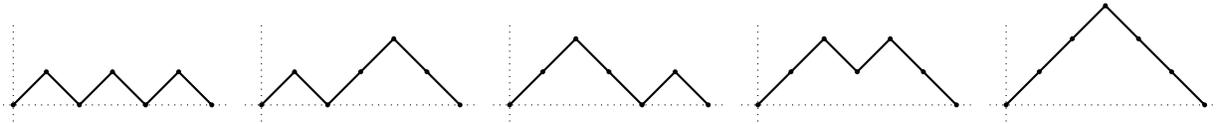

The symmetry resulting from reflecting the paths implies that $D(x,y,z)=D(1/x,t,x^2z)$. An an alternative expression to the one given in Theorem~\ref{thm:D}, from which this property is is more apparent, is
\begin{equation}\label{eq:Dsym}
D(x,y,z)=\frac{C(z)C(x^2z)}{1-xyzC(z)C(x^2z)}=\frac{4}{(1+\sqrt{1-4z})(1+\sqrt{1-4x^2z})-4xyz}.
\end{equation}

\subsection{The height of the $r$th vertex}
\label{sec:Dyck_vertices_average}

An argument similar to the one used in Section~\ref{sec:average_depth_binary} allows us to easily recover the average height of the $r$th vertex in Dyck paths of semilength $n$.
Letting
\begin{equation}\label{eq:dDxz}
\dD(x,z)\coloneqq\left.\frac{\partial D(x,y,z)}{\partial y}\right|_{y=1}=\frac{xzC(z)^2}{\left(1-xzC(z)-x^2zC(x^2z)\right)^2}=xzD(x,1,z)^2,
\end{equation}
the coefficient of $x^rz^n$ in $\dD(x,z)$, for $0\le r\le 2n$, is the sum, over all paths in $\D_n$, of the heights of the $r$th vertex.

A direct combinatorial explanation of equation~\eqref{eq:dDxz} is obtained by interpreting $\dD(x,z)$ as counting Dyck paths $K$ with a distinguished vertex $v_1$ together with an integer $j$ between $1$ and the height of the distinguished vertex.
By considering the maximal subpath $K_1$ containing the distinguished vertex and not going below height $y=j$, we can decompose $K$ into two Dyck paths, each with a distinguished vertex: the subpath $K_1$ with $v_1$ as its distinguished vertex, and the subpath obtained by removing $\uu K_1\dd$ from $K$ and identifying the endpoints of the removed piece into one distinguished vertex $v_2$.
The position of the distinguished vertex in $K$ is the sum of the positions of the distinguished vertices in $K_1$ and $K_2$, plus one because of the additional $\uu$ step. 
Finally, note that $D(x,1,z)$ is the generating function for Dyck paths with a distinguished vertex, whose first coordinate is marked by $x$.

In particular, we can write
$$D(x,1,z)=\frac{C(z)-xC(x^2z)}{1-x}=\sum_{n\ge0}c_{n}[2n+1]_xz^n.$$
Thus, extracting the coefficient of $z^n$ in equation~\eqref{eq:dDxz}, we get
$$[z^n]\dD(x,z)
=x\sum_{i=0}^{n-1}c_ic_{n-i-1}[2i+1]_x[2n-2i-1]_x,$$
from where
\begin{align}\nonumber
[x^rz^n]\dD(x,z)&=\sum_{i=0}^{n-1}c_ic_{n-i-1}\min(r,2n-r,2i+1,2n-2i-1)\\
\label{eq:interpretation}
&=2\sum_{i=0}^{\lfloor r/2 \rfloor-1}(2i+1)c_ic_{n-i-1}+r\sum_{i=\lfloor r/2 \rfloor}^{n-\lfloor r/2 \rfloor-1} c_ic_{n-i-1}\\
\nonumber 
&=rc_n-2\sum_{i=0}^{\lfloor r/2 \rfloor-1}(r-2i-1)c_ic_{n-i-1}\\
&=-c_n+\frac{\delta_r+r(2n-r)}{n(n+1)}\binom{r}{\lfloor r/2 \rfloor}\binom{2n-r}{n-\lfloor r/2\rfloor}, \label{eq:coefdD}
\end{align}
where 
\begin{equation}\label{eq:delta}
\delta_r=\begin{cases} n & \text{if $r$ is even,}\\
2n+1 & \text{if $r$ is odd.} \end{cases}
\end{equation}
In equation~\eqref{eq:interpretation} one can assume, by symmetry, that $0\le r\le n$. 
The closed form in equation~\eqref{eq:coefdD} was obtained using Maple, and it can be verified by induction on~$r$.

Dividing equation~\eqref{eq:coefdD} by $c_{n}$, we obtain the following formula for the average height of the $r$th vertex over paths in $\D_n$, as well as its asymptotic behavior. 
Note that, unlike in the case of the leaf depths in trees, here the height of the $r$th vertex of the path is bounded even as $n\to\infty$, and thus it is obvious that the average height is bounded as well.

\begin{theorem}\label{thm:average_heights}
For $0\le r\le 2n$, the average height of the $r$th vertex over paths in $\D_n$ equals
\begin{equation}\label{eq:average_heights}
\frac{\delta_r+r(2n-r)}{n}\cdot\frac{\binom{r}{\lfloor r/2 \rfloor}\binom{2n-r}{n-\lfloor r/2\rfloor}}{\binom{2n}{n}}-1,
\end{equation}
with $\delta_r$ given by equation~\eqref{eq:delta}. As $n\to\infty$, this average is asymptotically equal to
$$\begin{cases}
\displaystyle\frac{2r+1}{2^r}\binom{r}{r/2}-1 & \text{if $r$ is fixed and even},\smallskip\\
\displaystyle\frac{2r+2}{2^r}\binom{r}{(r-1)/2}-1 & \text{if $r$ is fixed and odd},\smallskip\\
\displaystyle\frac{2}{\sqrt{\pi}}\sqrt{r\left(2-\frac{r}{n}\right)} & \text{if $r=r(n)$ is such that $r\to\infty$ and $2n-r\to\infty$}.
\end{cases}$$
\end{theorem}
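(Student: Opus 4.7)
The plan is to derive the exact formula directly from equation~\eqref{eq:coefdD} and then treat the three asymptotic regimes separately. First I would divide equation~\eqref{eq:coefdD} by $c_n=\frac{1}{n+1}\binom{2n}{n}$ to get the average height as
$$\frac{[x^rz^n]\dD(x,z)}{c_n}=-1+\frac{\delta_r+r(2n-r)}{n}\cdot\frac{\binom{r}{\lfloor r/2\rfloor}\binom{2n-r}{n-\lfloor r/2\rfloor}}{\binom{2n}{n}},$$
which is exactly~\eqref{eq:average_heights}. This step is immediate by~\eqref{eq:average}.

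For the fixed-$r$ asymptotics, I would analyze the ratio of binomial coefficients by writing it as a finite product. Setting $a=\lfloor r/2\rfloor$,
$$\frac{\binom{2n-r}{n-a}}{\binom{2n}{n}}=\frac{n(n-1)\cdots(n-a+1)\cdot n(n-1)\cdots(n-r+a+1)}{(2n)(2n-1)\cdots(2n-r+1)},$$
a ratio of $r$ factors in numerator and denominator, each of which tends to $1/2$ as $n\to\infty$. Hence this ratio tends to $2^{-r}$. Simultaneously, $(\delta_r+r(2n-r))/n\to 2r+1$ when $r$ is even (since then $\delta_r=n$) and to $2r+2$ when $r$ is odd (since then $\delta_r=2n+1$). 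Multiplying and subtracting $1$ yields the two fixed-$r$ cases.

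For the regime $r,2n-r\to\infty$, I would invoke the standard asymptotic for the central binomial coefficient $\binom{m}{\lfloor m/2\rfloor}\sim 2^m\sqrt{2/(\pi m)}$ (which holds in both parities of $m$, as seen by combining Stirling with the identity $\binom{2k+1}{k}=\frac{2k+1}{k+1}\binom{2k}{k}$). Applying this to $m=r$ and to $m=2n-r$ (noting that $n-\lfloor r/2\rfloor$ is within $1/2$ of the center of $\binom{2n-r}{\cdot}$), and using $\binom{2n}{n}\sim 4^n/\sqrt{\pi n}$, gives
$$\frac{\binom{r}{\lfloor r/2\rfloor}\binom{2n-r}{n-\lfloor r/2\rfloor}}{\binom{2n}{n}}\sim\frac{2}{\sqrt{\pi}}\sqrt{\frac{n}{r(2n-r)}}.$$
Since $r(2n-r)/n\to\infty$ in this regime, the factor $\delta_r/n=O(1/n)$ is negligible compared to $r(2n-r)/n$, so $(\delta_r+r(2n-r))/n\sim r(2n-r)/n$. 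Multiplying produces $\frac{2}{\sqrt{\pi}}\sqrt{r(2-r/n)}$, and the subtracted $1$ is absorbed into the error.

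The main obstacle is the uniform handling of the two parities of $r$, in particular verifying that $\binom{2n-r}{n-\lfloor r/2\rfloor}$ is asymptotically the central binomial coefficient regardless of parity, and that $\delta_r$ assembles correctly with $r(2n-r)$ to produce $2r+1$ or $2r+2$ in the finite-$r$ limits. Given the closed form already in hand, the calculations are routine after this is checked; an alternative proof for the last case, bypassing the closed form and instead approximating the sum in equation~\eqref{eq:interpretation} by an integral in the style of~\eqref{eq:integral}, is also available and would generalize more readily to settings where no closed form exists.
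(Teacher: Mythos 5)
Your proposal is correct and follows essentially the same route as the paper: the exact formula comes from dividing equation~\eqref{eq:coefdD} by $c_n$ via equation~\eqref{eq:average}, and the asymptotics come from the central binomial approximation~\eqref{eq:binomapprox}; you are simply carrying out explicitly the routine estimates the paper leaves implicit. One cosmetic slip: $\delta_r/n$ is $O(1)$, not $O(1/n)$, but since $r(2n-r)/n\ge\min(r,2n-r)\to\infty$ in the last regime it is negligible either way, so the conclusion stands.
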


\begin{proof}
By equation~\eqref{eq:average}, the exact formula for the average is obtained by dividing equation~\eqref{eq:coefdD} by $c_n$. For the asymptotic expressions, we use the approximation
\begin{equation}\label{eq:binomapprox}
\binom{2k}{k}\sim \frac{4^k}{\sqrt{\pi k}}
\end{equation}
for $k\to\infty$.
\end{proof}

The first few values of the sequence of asymptotic average heights given by Theorem~\ref{thm:average_heights} for fixed $r$ appear in Table~\ref{tab:small_r}, and are plotted on Figure~\ref{fig:dist_Dyck_smallr}.

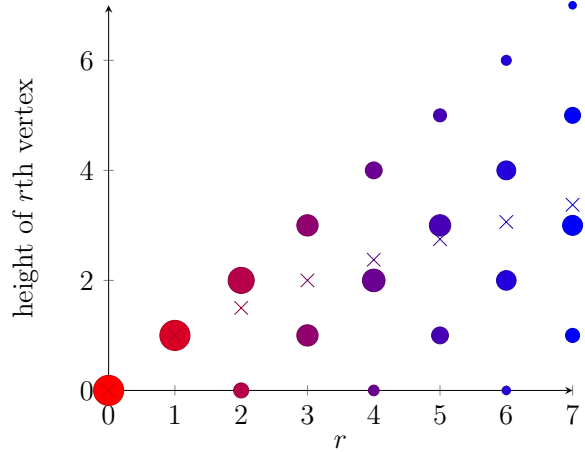
\begin{figure}[h]
\centering
\begin{tikzpicture}[scale=.9]
\begin{axis}[
    axis lines = left,
    xlabel = \(r\),
    ylabel = {height of $r$th vertex},
    ymin=0,
    colormap={my colormap}{
                color=(red)
                color=(blue)
            },
]

\addplot[scatter,scatter src=x,mark=*,only marks,mark options={scale=sqrt(10)}] coordinates {(0,0) (1,1)};
\addplot[scatter,scatter src=x,mark=*,only marks,mark options={scale=sqrt(30/4)}] coordinates {(2,2)};
\addplot[scatter,scatter src=x,mark=*,only marks,mark options={scale=sqrt(10/4)}] coordinates {(2,0)};
\addplot[scatter,scatter src=x,mark=*,only marks,mark options={scale=sqrt(10/2)}] coordinates {(3,1) (3,3) (5,3)};
\addplot[scatter,scatter src=x,mark=*,only marks,mark options={scale=sqrt(10/8)}] coordinates {(4,0)};
\addplot[scatter,scatter src=x,mark=*,only marks,mark options={scale=sqrt(90/16)}] coordinates {(4,2)};
\addplot[scatter,scatter src=x,mark=*,only marks,mark options={scale=sqrt(50/16)}] coordinates {(4,4) (5,1)};
\addplot[scatter,scatter src=x,mark=*,only marks,mark options={scale=sqrt(30/16)}] coordinates {(5,5)};
\addplot[scatter,scatter src=x,mark=*,only marks,mark options={scale=sqrt(50/64)}] coordinates {(6,0)};
\addplot[scatter,scatter src=x,mark=*,only marks,mark options={scale=sqrt(270/64)}] coordinates {(6,2)};
\addplot[scatter,scatter src=x,mark=*,only marks,mark options={scale=sqrt(250/64)}] coordinates {(6,4)};
\addplot[scatter,scatter src=x,mark=*,only marks,mark options={scale=sqrt(70/64)}] coordinates {(6,6)};
\addplot[scatter,scatter src=x,mark=*,only marks,mark options={scale=sqrt(70/32)}] coordinates {(7,1)};
\addplot[scatter,scatter src=x,mark=*,only marks,mark options={scale=sqrt(70/16)}] coordinates {(7,3)};
\addplot[scatter,scatter src=x,mark=*,only marks,mark options={scale=sqrt(90/32)}] coordinates {(7,5)};
\addplot[scatter,scatter src=x,mark=*,only marks,mark options={scale=sqrt(10/16)}] coordinates {(7,7)};

\addplot[
scatter,
scatter src=x,
    mark=x,
    only marks,
    mark options={scale=2},
    ]
    coordinates {
    (0, 0) (1, 1) (2, 3/2) (3, 2) (4, 19/8) (5, 11/4) (6, 49/16) (7, 27/8)
    };
    
 \end{axis}
\end{tikzpicture}
\caption{The limiting distribution of the height of the $r$th vertex in Dyck paths of semilength $n\to\infty$, for $0\le r\le 7$, represented by the relative area of the circles with $x$-coordinate equal to $r$. The crosses indicate the average height of the $r$th vertex.}
\label{fig:dist_Dyck_smallr}
\end{figure}

Letting $r=r(n)=2\alpha n$ for some constant $\alpha\in(0,1)$, Theorem~\ref{thm:average_heights} implies that the average height of the path at $x=2\alpha n$ is asymptotically given by
\begin{equation}\label{eq:height_alpha}
\frac{4}{\sqrt{\pi}}\sqrt{\alpha (1-\alpha)n}.
\end{equation}

We can also use equation~\eqref{eq:dDxz} to compute the average area under Dyck paths.
The coefficients of the generating function 
$$\dD(1,z)
=\frac{1}{1-4z}+\frac{1}{2z}\left(1-\frac{1}{\sqrt{1-4z}}\right)$$ 
give the total sum of vertex heights over all Dyck paths in $\D_n$.
Since the area under a Dyck path is equal to the sum of the heights of its vertices, we recover the well-known fact \cite[Ex.\ VII.26]{flajolet_analytic_2009} that the average area under a path in $\D_n$ is
$$\frac{[z^n]\dD(1,z)}{c_n}=\frac{4^n-\frac{1}{2}\binom{2n+2}{n+1}}{c_n}\sim\sqrt{\pi n^3}.$$

In analogy to Section~\ref{sec:distribution_binary} for leaf depths, we can describe, for fixed $r$, the limiting distribution of the height of the $r$th vertex in Dyck paths of semilength tending to infinity.

\begin{theorem}\label{thm:distribution_Dyck}
Let $r,d\ge0$. The limit as $n\to\infty$ of the probability that the $r$th vertex in a random path in $\D_n$ has height $d$ exists. Denoting this limit by $p_{r,d}$, we have
$$\sum_{r\ge0}\sum_{d\ge0} p_{r,d}\, x^r y^d = \frac{2}{1+y^2-2xy+(1-y^2)\sqrt{1-x^2}}.$$
\end{theorem}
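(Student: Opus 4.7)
The strategy mirrors the proof of Theorem~\ref{thm:distribution_binary}: apply singularity analysis in $z$ to the generating function $D(x,y,z)$ from Theorem~\ref{thm:D} at its dominant singularity $z=1/4$, then divide by $c_n\sim 4^n/\sqrt{\pi n^3}$ and identify the resulting bivariate generating function. The difference with the binary tree case is that no substitution of the form $x=t/z$ is needed: while $D(x,y,z)$ contains two square-root factors, $\sqrt{1-4z}$ and $\sqrt{1-4x^2z}$, only the first is singular at $z=1/4$ for $|x|<1$; the second factor is analytic there and evaluates to $\sqrt{1-x^2}$. Equivalently, using the symmetric form~\eqref{eq:Dsym}, one can write $D(x,y,z)=C(z)C(x^2z)/(1-xyzC(z)C(x^2z))$ and note that $C(x^2z)$ is analytic at $z=1/4$ with $C(x^2/4)=2/(1+\sqrt{1-x^2})$.

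The first step is to compute the singular expansion of $D(x,y,z)$ as $z\to 1/4$. Writing $\epsilon=\sqrt{1-4z}$, so that $z=(1-\epsilon^2)/4$, one gets after straightforward simplification
$$D(x,y,z)=\frac{4}{(1+\epsilon)\bigl(1-xy+xy\epsilon+\sqrt{1-x^2+x^2\epsilon^2}\bigr)}.$$
Setting $\epsilon=0$ gives the constant term $4/(1-xy+\sqrt{1-x^2})$, and differentiating at $\epsilon=0$ (using that $\partial_\epsilon\sqrt{1-x^2+x^2\epsilon^2}$ vanishes at $\epsilon=0$) yields the coefficient of $\epsilon$, which after a short calculation collapses to $-4(1+\sqrt{1-x^2})/(1-xy+\sqrt{1-x^2})^2$. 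Thus
$$D(x,y,z)=\frac{4}{1-xy+\sqrt{1-x^2}}-\frac{4(1+\sqrt{1-x^2})}{(1-xy+\sqrt{1-x^2})^2}\sqrt{1-4z}+O(1-4z).$$

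Next I would apply the standard transfer theorem $[z^n]\sqrt{1-4z}\sim -4^n/(2\sqrt{\pi n^3})$, so that
$$[z^n]D(x,y,z)\sim \frac{2(1+\sqrt{1-x^2})}{(1-xy+\sqrt{1-x^2})^2}\cdot\frac{4^n}{\sqrt{\pi n^3}}.$$
Dividing by $c_n\sim 4^n/\sqrt{\pi n^3}$ and using that the dominant-singularity estimates hold uniformly on a polydisk $|x|\le\rho$, $|y|\le\rho$ with $\rho<1$ (since within such a disk $\sqrt{1-4x^2z}$ stays analytic at $z=1/4$ and the singular analysis is uniform), the exchange of the $n\to\infty$ limit with the extraction of $[x^ry^d]$ is justified. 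This gives
$$\sum_{r\ge0}\sum_{d\ge0}p_{r,d}\,x^ry^d=\frac{2(1+\sqrt{1-x^2})}{(1-xy+\sqrt{1-x^2})^2}.$$

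The final step is the algebraic identity reconciling this closed form with the one stated in the theorem: letting $s=\sqrt{1-x^2}$, one checks that
$$(1+s)\bigl(1+y^2-2xy+(1-y^2)s\bigr)=(1-xy+s)^2,$$
which follows by expansion using $s^2=1-x^2$. The right-hand side of the desired formula is then obtained by dividing the numerator and denominator of $2(1+s)/(1-xy+s)^2$ by $(1+s)$. The main obstacle is the bookkeeping in the two $\epsilon$-expansions, but the computation simplifies strikingly once one observes that the cross terms in the numerator cancel, leaving a single factor of $(1+\sqrt{1-x^2})$; verifying the algebraic identity with the claimed form is then routine.
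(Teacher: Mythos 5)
Your proposal is correct, and it reaches the stated generating function by a genuinely different (and arguably cleaner) route than the paper. The paper substitutes $x=t/q$, $z=q^2$ and writes $qD(t/q,y,q^2)=g(qC(q^2))$; since $qC(q^2)$ has \emph{two} conjugate dominant singularities at $q=\pm1/2$, the paper must add the contributions of both, and the parity constraint $p_{r,d}=0$ for $r\not\equiv d\pmod 2$ emerges from the cancellation $[t^r]g'(1)+(-1)^r[t^r]g'(-1)$. You instead keep $z$ as the analytic variable, observe that for $|x|<1$ the factor $\sqrt{1-4x^2z}$ is regular at $z=1/4$, and perform a single-singularity expansion in $\epsilon=\sqrt{1-4z}$; your computations of the constant and linear terms, the transfer step, and the algebraic identity $(1+s)\bigl(1+y^2-2xy+(1-y^2)s\bigr)=(1-xy+s)^2$ with $s=\sqrt{1-x^2}$ all check out. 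In your version the parity constraint is encoded automatically by the invariance of $2(1+s)/(1-xy+s)^2$ under $(x,y)\mapsto(-x,-y)$ rather than by cancellation between two singularities. What your approach buys is a shorter computation with only one singular point to track; what the paper's buys is uniformity of method (the same $x=t/z$ template is used in Theorems~\ref{thm:distribution_binary}, \ref{thm:distribution_Schroeder}, \ref{thm:distribution_noncrossing} and~\ref{thm:distribution_upstep}). The one point you assert rather than prove --- uniformity of the singular estimate over a circle $|x|=\rho<1$ (and in $y$), which justifies exchanging $n\to\infty$ with the extraction of $[x^ry^d]$ --- is standard and is implicitly relied on in the paper's own proofs as well, so it does not constitute a gap relative to the paper's level of rigor; for fixed $r$ one can also note that $[x^rz^n]D(x,y,z)$ is a polynomial in $y$ of degree at most $r$, so convergence of these polynomials forces coefficientwise convergence.
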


\begin{proof}
Making the substitutions $x=t/q$ and $z=q^2$ in equation~\eqref{eq:Dsym}, we can write
$q D(t/q,y,q^2)=g(q C(q^2))$, where
$$g(w)\coloneqq g(t,y,w)=\frac{C(t^2)w}{1-yt C(t^2)w}.$$
The function $q C(q^2)$ has dominant singularities at $q=\pm1/2$, with singular expansions
\begin{align*} q C(q^2)&=1-\sqrt{2}\sqrt{1-2q}+o(\sqrt{1-2q}), \\ 
q C(q^2)&=-1+\sqrt{2}\sqrt{1+2q}+o(\sqrt{1+2q}).\end{align*}
Thus, arguing as in the proof of \cite[Prop.\ IX.1]{flajolet_analytic_2009}, the singular expansions of $q D(t/q,y,q^2)$ at $q=\pm1/2$ are
\begin{align*}
q D(t/q,y,q^2)&=g(1)-\sqrt{2}g'(1)\sqrt{1-2q}+o(\sqrt{1-2q}), \\
q D(t/q,y,q^2)&=g(-1)+\sqrt{2}g'(-1)\sqrt{1+2q}+o(\sqrt{1+2q}),
\end{align*}
respectively, where the derivatives of $g$ are with respect to $w$.

Using singularity analysis, fixing $r$ and letting $n\to\infty$, and adding the contributions of the two dominant singularities, we get
\begin{align*}[x^rz^n]D(x,y,z)=[t^rq^{2n-r+1}]qD(t/q,y,q^2)&\sim [t^r] \frac{  \sqrt{2}g'(1)2^{2n-r+1}-\sqrt{2}g'(-1)(-2)^{2n-r+1} }{2\sqrt{\pi(2n)^3}}\\
&=\frac{2^{2n-r-1}}{\sqrt{\pi n^3}}\left([t^r]g'(1)+(-1)^r[t^r]g'(-1)\right).
\end{align*}
Dividing by $[x^rz^n]D(x,1,z)=c_n$ and using the approximation~\eqref{eq:Ckapprox}, we find the probability generating function of the limiting distribution of the height of the $r$th vertex in Dyck paths:
$$\sum_{d\ge0}p_{r,d}\,y^d=\frac{[x^rz^n]D(x,y,z)}{c_n}\sim \frac{[t^r]g'(1)+(-1)^r[t^r]g'(-1)}{2^{r+1}}=[x^r]\frac{g'(1)|_{t=x/2}+g'(-1)|_{t=-x/2}}{2}.$$
The expression on the right simplifies to the stated generating function.
\end{proof}

One important difference between the limiting distributions in Theorems~\ref{thm:distribution_binary} and~\ref{thm:distribution_Dyck} is that, for fixed $r$, the depth of the $r$th leaf in a binary tree of size $n$ can be arbitrarily large as $n\to\infty$, whereas the height of the $r$th vertex in a Dyck path of semilength $n$ is always at most $r$. For this reason, the coefficient of $x^r$ in the generating function from
Theorem~\ref{thm:distribution_Dyck} is a polynomial in $y$ of degree $r$, whereas the coefficient of $x^r$ in the generating function in Theorem~\ref{thm:distribution_binary} was a power series of unbounded degree.

The limiting distribution of the height of the $r$th vertex for small values of $r$ is plotted in Figure~\ref{fig:dist_Dyck_smallr}.
Differentiating the generating function from Theorem~\ref{thm:distribution_Dyck} with respect to $y$ and evaluating at $y=1$, we obtain
\[
\frac{\sqrt{1-x^2}}{(1-x)^2}-\frac{1}{1-x}=\frac{(1+x)^2}{(1-x^2)^{3/2}}-\frac{1}{1-x}.
\]
Extracting the coefficient of $x^r$, we recover the expressions in Theorem~\ref{thm:average_heights} giving the average height of the $r$th vertex in the limit as $n\to\infty$, for fixed~$r$.

When $r=2 \alpha n$ for some constant $\alpha\in(0,1)$, the height distribution of the $r$th vertex in Dyck paths of semilength $n\to\infty$ is well understood. Indeed, it is a classical result from probability~\cite{kaigh_invariance_1976} that, after dividing by $\sqrt{2n}$, this distribution converges to a Brownian excursion process.

\subsection{Enumeration with respect to up-step heights}
\label{sec:Dyck_upsteps}

A small modification of the argument in Section~\ref{sec:Dyck_vertices} allows us to enumerate Dyck paths with respect to the heights of their up-steps. We define the height of a step to be the $y$-coordinate of its highest point.
We index the up-steps of a path in $\D_n$ from left to right from $1$ to $n$.
Let $U(x,y,z)$ be the generating function where the coefficient of $x^ry^dz^n$ is the number of Dyck paths of semilength $n$ whose $r$th up-step has height $d$.
In particular, the first step always has height $1$.

\begin{theorem}\label{thm:U} 
$$U(x,y,z)=\frac{xyzC(z)^2}{1-xz(yC(z)+C(xz))}=\frac{xyzC(z)^2C(xz)}{1-xyzC(z)C(xz)}.$$
\end{theorem}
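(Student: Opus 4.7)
The proof should parallel the first-return decomposition used in Theorem~\ref{thm:D}. I would interpret $U(x,y,z)$ as the generating function for pairs consisting of a Dyck path together with a distinguished up-step, where $x$ marks the index of that up-step (from left to right), $y$ marks its height, and $z$ marks the semilength. Any nonempty Dyck path $K$ decomposes uniquely as $K=\uu K_1\dd K_2$, where $K_1,K_2$ are Dyck paths and the initial $\uu$ is matched by the first step returning to the $x$-axis.

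I would split into three cases based on where the distinguished up-step lies:
(i) if it is the initial $\uu$, then its index is $1$ and its height is $1$, so the contribution is $xyz\cdot C(z)\cdot C(z)=xyzC(z)^2$;
(ii) if it lies in $K_1$, then wrapping $K_1$ with $\uu\cdots\dd$ increases the distinguished step's index by $1$ (because of the initial $\uu$) and its height by $1$ (because $K_1$ is lifted one unit), so the contribution is $xyz\cdot U(x,y,z)\cdot C(z)$;
(iii) if it lies in $K_2$, its height is preserved, but its index in $K$ exceeds its index in $K_2$ by $1$ plus the number of up-steps of $K_1$; since a Dyck path of semilength $n$ has exactly $n$ up-steps, weighting each up-step of $K_1$ by an additional $x$ amounts to replacing $z$ with $xz$ in the generating function of $K_1$, giving a contribution of $xz\cdot C(xz)\cdot U(x,y,z)$.

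Combining the three cases yields the linear functional equation
\begin{equation*}
U(x,y,z)=xyzC(z)^2+xyzC(z)\,U(x,y,z)+xzC(xz)\,U(x,y,z),
\end{equation*}
and solving for $U(x,y,z)$ immediately produces the first stated form. For the second form, I would apply the identity $1-xzC(xz)=1/C(xz)$, which follows from $C(w)=1+wC(w)^2$ upon substituting $w=xz$. Rewriting the denominator as
\begin{equation*}
1-xz(yC(z)+C(xz))=\frac{1-xyzC(z)C(xz)}{C(xz)}
\end{equation*}
and clearing denominators gives the product form. I do not anticipate any real obstacle; the only delicate point is carefully tracking the shifts in both the index and the height of the distinguished up-step through the wrapping and concatenation in the decomposition, which is the main place where a miscount could occur.
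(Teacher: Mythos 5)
Your proof is correct and follows essentially the same route as the paper: the same interpretation of $U(x,y,z)$ as counting Dyck paths with a distinguished up-step, the same first-return decomposition $K=\uu K_1\dd K_2$ with the same three cases and contributions, and the same resulting functional equation. The only addition is that you spell out the algebraic identity $1-xzC(xz)=1/C(xz)$ to pass between the two stated forms, which the paper leaves implicit; that step is also correct.
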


\begin{proof}
Viewing $U(x,y,z)$ as the generating function for Dyck paths with a distinguished up-step, where $x$ marks the index and $y$ marks the height of this step, the usual first-return decomposition of a Dyck path as $\uu K_1\dd K_2$ yields the equation
$$U(x,y,z)=xyzC(z)^2+xyzU(x,y,z)C(z)+xzC(xz)U(x,y,z).$$
Indeed, we consider three cases depending on the location of the distinguished up-step. 
If it is the first $\uu$, then each of the subpaths $K_1$ and $K_2$ contributes $C(z)$, giving a term $xyzC(z)^2$.
If the distinguished step is in $K_1$, then $\uu K_1\dd $ contributes $xyzU(x,y,z)$, and $K_2$ contributes $C(z)$.
Finally, if the distinguished step is in $K_2$,  then $K_2$ contributes $U(x,y,z)$ and $\uu K_1\dd $ contributes $xzC(xz)$, because it causes a shift in the index of the distinguished step.

Solving for $U(x,y,z)$ we get the stated expressions.
\end{proof}

\subsection{The height of the $r$th up-step}
\label{sec:Dyck_upsteps_average}

To obtain the average height of the $r$th up-step over paths in $\D_n$, we first compute
\[
\dU(x,z)\coloneqq\left.\frac{\partial U(x,y,z)}{\partial y}\right|_{y=1}=U(x,1,z)\left(1+\frac{U(x,1,z)}{C(z)}\right).
\]
It is possible to give a combinatorial proof of this equation by interpreting $U(x,1,z)$ as the generating function for Dyck paths with a distinguished up-step, and $U(x,1,z)/C(z)$ as the generating function for Dyck paths with a distinguished up-step that belongs to a peak, where $x$ marks the index of the distinguished up-step in both cases. 

However, it will be more convenient to instead use the following expression in terms of the generating function $\dB(x,z)$ given by equation~\eqref{eq:dBxz}, obtained after some algebraic manipulations:
\[
\dU(x,z)=\frac{\dB(x,z)}{2}+\frac{(1-z-3xz)\sqrt{1-4z}-(1-3z-xz)\sqrt{1-4xz}}{(1-x)^2z^2}.
\]
Extracting coefficients as in Section~\ref{sec:average_depth_binary}, using equation~\eqref{eq:coefdB} and simplifying, we obtain
\begin{align}\nonumber
[x^rz^n]\dU(x,z)&=2rc_n-\sum_{i=0}^{r-1}(r-i)c_ic_{n-i}\\
\label{eq:coefdU}
&=2rc_n-\frac{r+1}{2}c_{n+1}+\frac{(2r+1)(2(n-r)+1)}{(n+1)(n+2)}\binom{2r}{r}\binom{2(n-r)}{n-r}
\end{align}
for $0\le r\le n$. 

Incidentally, comparing equations~\eqref{eq:coefdU} and~\eqref{eq:coefdB}, we observe that the total depth of he $r$th leaf over binary trees and the total height of the $r$th up-step over Dyck paths are related by
$$2[x^rz^n]\dU(x,z)-[x^rz^n]\dB(x,z)=(4r+1)c_n-(r+1)c_{n+1}.$$

\begin{theorem}\label{thm:average_height_upstep}
For $1\le r\le n$, the average height of the $r$th up-step over paths in $\D_n$ equals
\begin{equation}\label{eq:average_height_upstep}
\frac{(2r+1)(2(n-r)+1)}{n+2}\cdot\frac{\binom{2r}{r}\binom{2(n-r)}{n-r}}{\binom{2n}{n}}+\frac{3(r+1)}{n+2}-2.
\end{equation}
As $n\to\infty$, this average is asymptotically equal to
\begin{numcases}{}
\displaystyle\frac{4r+2}{4^{r}}\binom{2r}{r}-2
 & \text{if $r$ is fixed},\smallskip \label{eq:asymtotic_average_r}\\
\displaystyle\frac{4s+2}{4^{s}}\binom{2s}{s}+1
 & \text{if $s=n-r$ is fixed},\smallskip \label{eq:asymtotic_average_s}\\
\displaystyle\frac{4}{\sqrt{\pi}}\sqrt{r\left(1-\frac{r}{n}\right)} & \text{if $r=r(n)$ is such that $r\to\infty$ and $n-r\to\infty$}. \label{eq:asymtotic_average_notfixed}
\end{numcases}
\end{theorem}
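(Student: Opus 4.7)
The plan is to start from the closed form of $[x^rz^n]\dU(x,z)$ in equation~\eqref{eq:coefdU}, divide through by $c_n$ to get the average, and then specialize to each of the three asymptotic regimes using the approximation $\binom{2k}{k}\sim 4^k/\sqrt{\pi k}$. There is no need to return to the generating function itself; the whole theorem is bookkeeping on top of~\eqref{eq:coefdU}.

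First I would use the Catalan ratio $c_{n+1}/c_n=2(2n+1)/(n+2)$ together with $\binom{2n}{n}=(n+1)c_n$ to rewrite the three terms of~\eqref{eq:coefdU} as
\[
\frac{[x^rz^n]\dU(x,z)}{c_n}=2r-\frac{(r+1)(2n+1)}{n+2}+\frac{(2r+1)(2(n-r)+1)}{n+2}\cdot\frac{\binom{2r}{r}\binom{2(n-r)}{n-r}}{\binom{2n}{n}}.
\]
The elementary identity $2r-\frac{(r+1)(2n+1)}{n+2}=\frac{3(r+1)}{n+2}-2$ then produces~\eqref{eq:average_height_upstep} exactly.

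For the asymptotic estimates I would treat the three regimes separately. When $r$ is fixed, I have $\binom{2(n-r)}{n-r}/\binom{2n}{n}\sim 4^{-r}\sqrt{n/(n-r)}\to 4^{-r}$ and $(2r+1)(2(n-r)+1)/(n+2)\sim 2(2r+1)$, while $3(r+1)/(n+2)\to 0$, so the sum collapses to $\frac{4r+2}{4^r}\binom{2r}{r}-2$, giving~\eqref{eq:asymtotic_average_r}. When $s=n-r$ is fixed, symmetrically $\binom{2r}{r}/\binom{2n}{n}\sim 4^{-s}$ and $(2r+1)(2s+1)/(n+2)\sim 2(2s+1)$, but now $3(r+1)/(n+2)\to 3$, so the additive tail contributes $3-2=+1$ rather than $-2$, producing~\eqref{eq:asymtotic_average_s}. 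Finally, when both $r\to\infty$ and $n-r\to\infty$, the triple binomial ratio simplifies via Stirling to $\sqrt{n/(\pi r(n-r))}$; multiplying by $(2r+1)(2(n-r)+1)/(n+2)\sim 4r(n-r)/n$ gives the leading $\frac{4}{\sqrt{\pi}}\sqrt{r(1-r/n)}$, which dominates the $O(1)$ additive correction and yields~\eqref{eq:asymtotic_average_notfixed}.

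The main subtlety will be keeping track of the additive constants: the $-2$ combines differently with the boundary behavior of $3(r+1)/(n+2)$ in the two fixed-endpoint regimes (contributing $-2$ when $r$ is fixed and $+1$ when $n-r$ is fixed), whereas in the bulk regime both constants are swamped by the $\sqrt{n}$-order main term. Otherwise the argument is purely a matter of substituting the Stirling asymptotic for the central binomial coefficient and simplifying, and I do not anticipate any analytic difficulty beyond that bookkeeping.
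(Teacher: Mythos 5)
Your proposal is correct and is essentially the paper's own proof: the paper likewise obtains \eqref{eq:average_height_upstep} by dividing equation~\eqref{eq:coefdU} by $c_n$ and derives the three asymptotic regimes from the approximation~\eqref{eq:binomapprox}. Your algebraic identity $2r-\frac{(r+1)(2n+1)}{n+2}=\frac{3(r+1)}{n+2}-2$ and the bookkeeping of the additive constants in the two fixed-endpoint regimes check out.
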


\begin{proof}
To obtain the average, we divide equation~\eqref{eq:coefdU} by $c_n$. For the asymptotic expressions, we use equation~\eqref{eq:binomapprox}.
\end{proof}

By reflecting a path in $\D_n$ vertically, the $r$th up-step becomes the $(s+1)$st down-step (from the left), where $s=n-r$, and so equation~\eqref{eq:asymtotic_average_s} describes the asymptotic average of the height of the $(s+1)$st down-step.
The first few values of the sequence of asymptotic average heights given by equations~\eqref{eq:asymtotic_average_r} and~\eqref{eq:asymtotic_average_s} appear in Table~\ref{tab:small_r}, and are plotted on the right of Figure~\ref{fig:dist_upsteps_smallr}.

Note that the height of the first down-step in a Dyck path equals the length of the initial run of up-steps, which, via the bijection $f_L$ that we will describe in Section~\ref{sec:bijections}, is equidistributed with the depth of the leftmost leaf in binary trees. In particular,  equation~\eqref{eq:avg_depth} when $r=0$ and equation~\eqref{eq:average_height_upstep} when $r=n$ give the same expression to $3n/(n+2)$.

\begin{figure}[h]
\centering
\begin{tikzpicture}[scale=.9]
\begin{axis}[
    axis lines = left,
    xlabel = {height},
    ymin=0,
]
    
\addplot[color=blue!14.3!red,mark=*]
    coordinates { (1, 1) (2, 0) (3, 0) (4, 0) (5, 0) (6, 0) (7, 0) (8, 0)    };
    \addlegendentry{$r=1$}
    
\addplot[color=blue!28.6!red,mark=*]
    coordinates { (1, 1/4) (2, 3/4) (3, 0) (4, 0) (5, 0) (6, 0) (7, 0) (8, 0) };
    \addlegendentry{$r=2$}
    
\addplot[color=blue!42.9!red,mark=*]
    coordinates { (1, 1/8) (2, 3/8) (3, 1/2) (4, 0) (5, 0) (6, 0) (7, 0) (8, 0)    };
   \addlegendentry{$r=3$}
    
\addplot[color=blue!57.1!red,mark=*]
    coordinates { (1, 5/64) (2, 15/64) (3, 3/8) (4, 5/16) (5, 0) (6, 0) (7, 0) (8, 0)    };
    \addlegendentry{$r=4$}

\addplot[color=blue!71.4!red,mark=*]
    coordinates { (1, 7/128) (2, 21/128) (3, 9/32) (4, 5/16) (5, 3/16) (6, 0) (7, 0) (8, 0)    };
    \addlegendentry{$r=5$}

\addplot[color=blue!85.7!red,mark=*]
    coordinates {(1, 21/512) (2, 63/512) (3, 7/32) (4, 35/128) (5, 15/64) (6, 7/64) (7, 0) (8, 0)    };
    \addlegendentry{$r=6$}
    
\addplot[color=blue!100!red,mark=*]
    coordinates { (1, 33/1024) (2, 99/1024) (3, 45/256) (4, 15/64) (5, 15/64) (6, 21/128) (7, 1/16) (8, 0)    };
    \addlegendentry{$r=7$}

    \addplot[color=blue!14.3!red,mark=x,mark options={scale=2}]   coordinates {(1,0)};
    \addplot[color=blue!28.6!red,mark=x,mark options={scale=2}]   coordinates {(7/4,0)};
    \addplot[color=blue!42.9!red,mark=x,mark options={scale=2}]   coordinates {(19/8,0)};
    \addplot[color=blue!57.1!red,mark=x,mark options={scale=2}]   coordinates {(187/64,0)};
    \addplot[color=blue!71.4!red,mark=x,mark options={scale=2}]   coordinates {(437/128,0)};
    \addplot[color=blue!85.7!red,mark=x,mark options={scale=2}]   coordinates {(1979/512,0)};
    \addplot[color=blue!100!red,mark=x,mark options={scale=2}]   coordinates {(4387/1024,0)};

\end{axis}
\end{tikzpicture}
\quad 
\begin{tikzpicture}[scale=.9]
\begin{axis}[
    axis lines = left,
    xlabel = \(r\),
    ylabel = {average height of $r$th up-step},
    ymin=0,
    colormap={my colormap}{
                color=(red)
                color=(blue)
            },
]
\addplot[
scatter,
scatter src=x,
    mark=x,
    mark options={scale=2},
    only marks,
    ]
    coordinates {
     (1, 1) (2, 7/4) (3, 19/8) (4, 187/64) (5, 437/128) (6, 1979/512) (7, 4387/1024)
    };
\end{axis}
\end{tikzpicture}

\begin{tikzpicture}[scale=.9]
\begin{axis}[
    axis lines = left,
    xlabel = {height},
    ymin=0,
    ytick={0,0.05,0.1,0.15,0.2,0.25},
    yticklabels={0,0.05,0.1,0.15,0.2,0.25},    
]
    
\addplot[color=blue!14.3!red,mark=*]
    coordinates { (1, 1/4) (2, 1/4) (3, 3/16) (4, 1/8) (5, 5/64) (6, 3/64) (7, 7/256) (8, 1/64) (9, 9/1024) (10, 5/1024) (11, 11/4096) (12, 3/2048) (13, 13/16384) (14, 7/16384) (15, 15/65536) (16, 1/8192)    };
    \addlegendentry{$r=1$})    
\addplot[color=blue!28.6!red,mark=*]
    coordinates { (1, 1/8) (2, 3/16) (3, 3/16) (4, 5/32) (5, 15/128) (6, 21/256) (7, 7/128) (8, 9/256) (9, 45/2048) (10, 55/4096) (11, 33/4096) (12, 39/8192) (13, 91/32768) (14, 105/65536) (15, 15/16384) (16, 17/32768) };
    \addlegendentry{$r=2$}
    
\addplot[color=blue!42.9!red,mark=*]
    coordinates { (1, 5/64) (2, 9/64) (3, 21/128) (4, 5/32) (5, 135/1024) (6, 105/1024) (7, 77/1024) (8, 27/512) (9, 585/16384) (10, 385/16384) (11, 495/32768) (12, 39/4096) (13, 1547/262144) (14, 945/262144) (15, 285/131072) (16, 85/65536)   };
   \addlegendentry{$r=3$}
    
\addplot[color=blue!57.1!red,mark=*]
    coordinates { (1, 7/128) (2, 7/64) (3, 9/64) (4, 75/512) (5, 275/2048) (6, 231/2048) (7, 91/1024) (8, 273/4096) (9, 1575/32768) (10, 275/8192) (11, 187/8192) (12, 1989/131072) (13, 5187/524288) (14, 3325/524288) (15, 525/131072) (16, 1309/524288)   };
    \addlegendentry{$r=4$}

\addplot[color=blue!71.4!red,mark=*]
    coordinates { (1, 21/512) (2, 45/512) (3, 495/4096) (4, 275/2048) (5, 2145/16384) (6, 1911/16384) (7, 3185/32768) (8, 315/4096) (9, 3825/65536) (10, 2805/65536) (11, 31977/1048576) (12, 11115/524288) (13, 60515/4194304) (14, 40425/4194304) (15, 26565/4194304) (16, 4301/1048576)   };
    \addlegendentry{$r=5$}

\addplot[color=blue!85.7!red,mark=*]
    coordinates { (1, 33/1024) (2, 297/4096) (3, 429/4096) (4, 1001/8192) (5, 4095/32768) (6, 1911/16384) (7, 833/8192) (8, 1377/16384) (9, 8721/131072) (10, 53295/1048576) (11, 39501/1048576) (12, 57057/2097152) (13, 161161/8388608) (14, 111573/8388608) (15, 18975/2097152) (16, 25415/4194304)   };
    \addlegendentry{$r=6$}
    
\addplot[color=blue!100!red,mark=*]
    coordinates { (1, 429/16384) (2, 1001/16384) (3, 3003/32768) (4, 455/4096) (5, 7735/65536) (6, 7497/65536) (7, 6783/65536) (8, 2907/32768) (9, 305235/4194304) (10, 241395/4194304) (11, 370139/8388608) (12, 69069/2097152) (13, 805805/33554432) (14, 575575/33554432) (15, 201825/16777216) (16, 69615/8388608)   };
    \addlegendentry{$r=7$}

    \addplot[color=blue!14.3!red,mark=x,mark options={scale=2}]   coordinates {(3,0)};
    \addplot[color=blue!28.6!red,mark=x,mark options={scale=2}]   coordinates {(4,0)};
    \addplot[color=blue!42.9!red,mark=x,mark options={scale=2}]   coordinates {(19/4,0)};
    \addplot[color=blue!57.1!red,mark=x,mark options={scale=2}]   coordinates {(43/8,0)};
    \addplot[color=blue!71.4!red,mark=x,mark options={scale=2}]   coordinates {(379/64,0)};
    \addplot[color=blue!85.7!red,mark=x,mark options={scale=2}]   coordinates {(821/128,0)};
    \addplot[color=blue!100!red,mark=x,mark options={scale=2}]   coordinates {(3515/512,0)};

\end{axis}
\end{tikzpicture}
\quad 
\begin{tikzpicture}[scale=.9]
\begin{axis}[
    axis lines = left,
    xlabel = \(r\),
    ylabel = {average height of $r$th down-step},
    ymin=0,
    colormap={my colormap}{
                color=(red)
                color=(blue)
            },
]
\addplot[
scatter,
scatter src=x,
    mark=x,
    only marks,
    mark options={scale=2}
    ]
    coordinates {
    (1, 3) (2, 4) (3, 19/4) (4, 43/8) (5, 379/64) (6, 821/128) (7, 3515/512)
    };
\end{axis}
\end{tikzpicture}

\caption{The limiting distribution (left) and average (right) of the height of $r$th up-step (above) and $r$th down-step (below) in Dyck paths of semilength $n\to\infty$, for $1\le r\le 7$.}
\label{fig:dist_upsteps_smallr}
\end{figure}
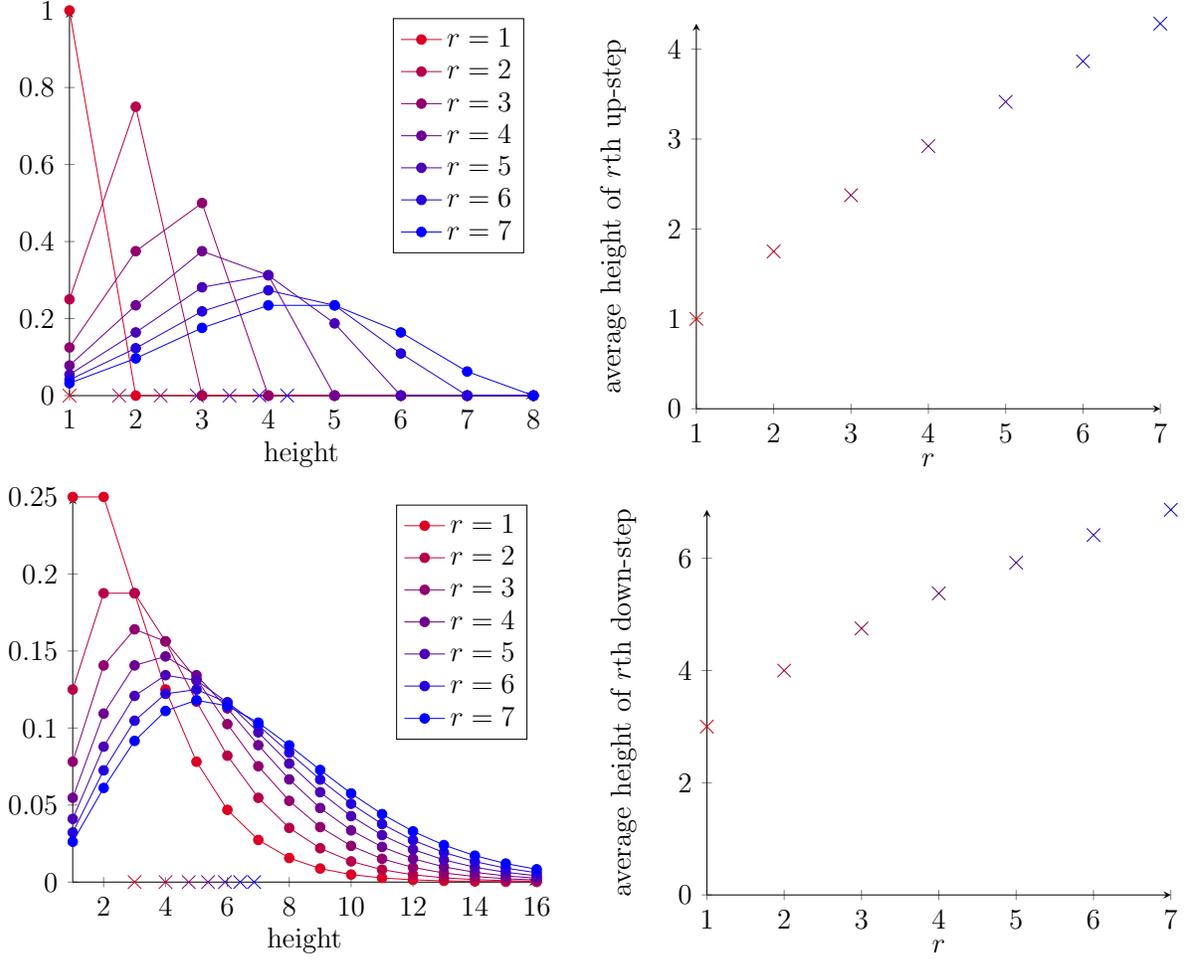

The average height of each vertex and each up-step in Dyck paths of semilength $n=20$, computed using 
equations~\eqref{eq:average_heights} and~\eqref{eq:average_height_upstep}, respectively, is shown in Figure~\ref{fig:average_height_upstep}. In both cases, the limit shape as $n\to\infty$ is the same as in the case of leaf depths of binary trees, plotted on the right of Figure~\ref{fig:average_depths_large_r}, where now the normalizing factor for the $y$-axis is $\sqrt{\pi n}/2$. Indeed, since $$\dU(1,z)=\frac{1}{2}\left(\frac{1}{1-4z}-\frac{1}{\sqrt{1-4z}}\right),$$
the average height of a random up-step in Dyck paths of semilegnth $n$ is
$$\frac{[z^n]\dU(1,z)}{nc_n}=\frac{4^n-\binom{2n}{n}}{2nc_n}\sim\frac{\sqrt{\pi n}}{2}.$$

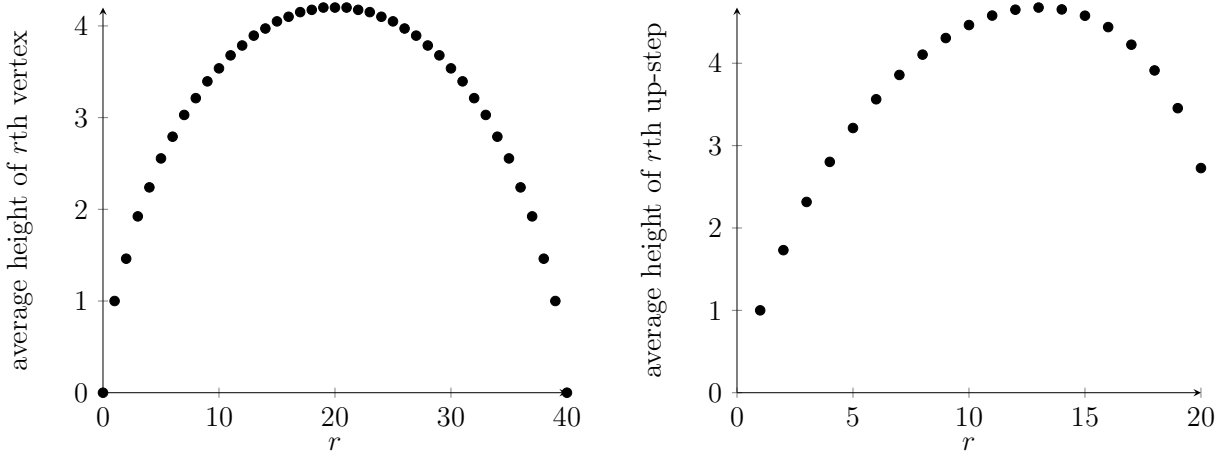
\begin{figure}[h]
\centering
\begin{tikzpicture}[scale=.9]
\begin{axis}[
    axis lines = left,
    xlabel = \(r\),
    ylabel = {average height of $r$th vertex},
    ymin=0, xmax=40, xmin=0,
]
\addplot[
    mark=*,
    only marks,
    ]
    coordinates {
(0, 0) (1, 1) (2, 19/13) (3, 25/13) (4, 1077/481) (5, 1229/481) (6, 1343/481) (7, 1457/481) (8, 16996/5291) (9, 17965/5291) (10, 580171/164021) (11, 54857/14911) (12, 1637365/432419) (13, 129529/33263) (14, 132113/33263) (15, 134697/33263) (16, 681883/166315) (17, 690281/166315) (18, 47914921/11475735) (19, 48200453/11475735) (20, 48200453/11475735) (21, 48200453/11475735) (22, 47914921/11475735) (23, 690281/166315) (24, 681883/166315) (25, 134697/33263) (26, 132113/33263) (27, 129529/33263) (28, 1637365/432419) (29, 54857/14911) (30, 580171/164021) (31, 17965/5291) (32, 16996/5291) (33, 1457/481) (34, 1343/481) (35, 1229/481) (36, 1077/481) (37, 25/13) (38, 19/13) (39, 1) (40, 0)
    };
\end{axis}
\end{tikzpicture}
\quad
\begin{tikzpicture}[scale=.9]
\begin{axis}[
    axis lines = left,
    xlabel = \(r\),
    ylabel = {average height of $r$th up-step},
    ymin=0, xmax=20, xmin=0,
]
\addplot[
    mark=*,
    only marks,
    ]
    coordinates {
(1, 1) (2, 45/26) (3, 1114/481) (4, 1348/481) (5, 17003/5291) (6, 89899/25234) (7, 1411570/365893) (8, 3003679/731786) (9, 4726585/1097679) (10, 34150511/7650490) (11, 5025952/1097679) (12, 3402835/731786) (13, 1710937/365893) (14, 117427/25234) (15, 24218/5291) (16, 23486/5291) (17, 22355/5291) (18, 1119/286) (19, 38/11) (20, 30/11)  
    };
\end{axis}
\end{tikzpicture}
\caption{The average height of each vertex (left) and each up-step (right) in Dyck paths of semilength $n=20$.}
\label{fig:average_height_upstep}
\end{figure}

We remark that, unlike in equation~\eqref{eq:avg_depth}, where the leaves in positions $r$ and $n-r$ had the same average depth, and in equation~\eqref{eq:average_heights}, where the vertices in positions $r$ and $2n-r$ had the same average height, equation~\eqref{eq:average_height_upstep} (and hence the right of Figure~\ref{fig:average_height_upstep}) does not exhibit such symmetry.  
However, this small difference between the averages of the heights of up-steps $r$ and $n-r$ becomes asymptotically negligible in equation~\eqref{eq:asymtotic_average_notfixed}.

The following curious fact is an immediate consequence of Theorem~\ref{thm:average_height_upstep}.

\begin{corollary}\label{cor:upstep-downstep}
For any fixed $r\ge1$, the limit as $n\to\infty$ of the average height of the $(r+1)$st down-step minus the average height of the $r$th up-step over paths in $\D_n$ equals $3$.
\end{corollary}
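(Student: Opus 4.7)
The plan is to derive the corollary as an essentially immediate consequence of Theorem~\ref{thm:average_height_upstep}, combined with the vertical reflection symmetry of Dyck paths. Let me outline the three short steps.

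First, I would record the symmetry already noted in the paragraph preceding Theorem~\ref{thm:average_height_upstep}: reversing a Dyck path $D \in \D_n$ and swapping $\uu$ with $\dd$ produces another path $D' \in \D_n$, and this operation sends the $j$th down-step of $D$ (from the left) to the $(n-j+1)$st up-step of $D'$, preserving its height (the $y$-coordinate of its highest point). Taking $j = r+1$, the $(r+1)$st down-step of $D$ corresponds to the $(n-r)$th up-step of $D'$. Since reflection is a bijection on $\D_n$, it follows that the average height of the $(r+1)$st down-step equals the average height of the $(n-r)$th up-step over paths in $\D_n$.

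Second, I would apply the asymptotic formula~\eqref{eq:asymtotic_average_s} of Theorem~\ref{thm:average_height_upstep} to this latter average. With the index $n-r$ in place of $r$, the quantity $s \coloneqq n - (n-r) = r$ is fixed, so the asymptotic average height of the $(n-r)$th up-step tends to $\frac{4r+2}{4^{r}}\binom{2r}{r}+1$ as $n\to\infty$.

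Third, I would apply formula~\eqref{eq:asymtotic_average_r} of the same theorem to the $r$th up-step, whose average height tends to $\frac{4r+2}{4^{r}}\binom{2r}{r}-2$. Subtracting the two limits, the binomial terms cancel exactly and the difference equals $1 - (-2) = 3$, yielding the claim.

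There is no real obstacle; the only point requiring a bit of care is the indexing in the reflection bijection, specifically verifying that the $(r+1)$st down-step is paired with the $(n-r)$th (not $(n-r+1)$st) up-step, so that the ``fixed $s$'' regime of~\eqref{eq:asymtotic_average_s} applies with $s=r$ rather than $s=r-1$. Once that is pinned down, the cancellation of the $\binom{2r}{r}$ contributions is automatic and exposes the constant gap of $3$ between the two averages.
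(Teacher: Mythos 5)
Your proposal is correct and is exactly the argument the paper intends: the reflection bijection identifies the $(r+1)$st down-step with the $(n-r)$th up-step (so equation~\eqref{eq:asymtotic_average_s} applies with $s=r$, giving limit $\frac{4r+2}{4^{r}}\binom{2r}{r}+1$), and subtracting the limit $\frac{4r+2}{4^{r}}\binom{2r}{r}-2$ from equation~\eqref{eq:asymtotic_average_r} yields $3$. The indexing check you flag is the only delicate point, and you resolve it correctly.
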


The height of the $r$th up-step in Dyck paths has been recently studied by Disanto and Munarini~\cite{disanto_local_2019} using a different approach. By means of recurrences and the WZ-method, the authors obtain a formula for the average height \cite[Eq.~(7)]{disanto_local_2019} that is equivalent to our equation~\eqref{eq:average_height_upstep}.

For fixed $r$, as we did with previous statistics, we can more generally describe the limiting distribution of the height of the $r$th up-step and the $r$th down-step in Dyck paths of semilength tending to infinity. The next theorem is proved similarly to Theorems~\ref{thm:distribution_binary} and~\ref{thm:distribution_Dyck}.

\begin{theorem}\label{thm:distribution_upstep}
Let $r,d\ge0$. The limit as $n\to\infty$ of the probability that the $r$th up-step (respectively, down-step)
in a random Dyck path of semilength $n$ leaves has height $d$ exists. Denoting this limit by $p_{r,d}$ (respectively, $p'_{r,d}$), we have
\begin{align}\label{eq:L_up-step}
\sum_{r\ge1}\sum_{d\ge0} p_{r,d}\, x^r y^d 
&=\frac{xy(4-4y+xy^2)}{2(1-y)(1+\sqrt{1 - x})- 3 x y + 4xy^2 - x^2 y^3},\\
\label{eq:L_down-step}
\sum_{r\ge1}\sum_{d\ge0} p_{r,d}'\, x^r y^d
&=\frac{xy}{2(1-y)(1+\sqrt{1 - x})-x+y^2}.
\end{align}
\end{theorem}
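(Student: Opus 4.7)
The plan is to adapt the singularity-analysis approach of Theorems~\ref{thm:distribution_binary} and~\ref{thm:distribution_Dyck}, applied to the generating function $U(x,y,z)$ of Theorem~\ref{thm:U} for the up-step distribution, and to an analogous generating function $U'(x,y,z)$ for down-steps which must first be set up.

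For down-steps, I would apply the first-return decomposition $K=\uu K_1\dd K_2$ and track which of three sites holds the distinguished down-step: the middle $\dd$, inside $K_1$, or inside $K_2$. Unlike the up-step case, down-steps inside $K_1$ are not shifted in index by the initial $\uu$, while down-steps inside $K_2$ are shifted by the number of down-steps in $K_1$ plus one (the middle $\dd$). These contributions yield the functional equation
\[
U'(x,y,z) = xyzC(xz)C(z) + yzC(z)\,U'(x,y,z) + xzC(xz)\,U'(x,y,z),
\]
whose solution is $U'(x,y,z) = xyzC(xz)C(z)/(1-yzC(z)-xzC(xz))$.

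For both generating functions, I would apply the substitution $x=t/q$, $z=q^2$ used in the proof of Theorem~\ref{thm:distribution_Dyck}, so that $[x^rz^n]U(x,y,z)=[t^rq^{2n-r}]U(t/q,y,q^2)$ and analogously for $U'$. Writing $w(q)=qC(q^2)$, which satisfies $C(q^2)=1+w^2$, each of $U(t/q,y,q^2)$ and $U'(t/q,y,q^2)$ becomes algebraic in $w$, $q$ and $C(tq)$, with dominant singularities in $q$ (for fixed small $t$ and $y$) at $q=\pm 1/2$ inherited from $w$. Since $C(tq)$ is analytic at $q=\pm 1/2$, a first-order expansion in $w\mp 1$ together with the singular expansions $w=1-\sqrt{2}\sqrt{1-2q}+o(\sqrt{1-2q})$ and $w=-1+\sqrt{2}\sqrt{1+2q}+o(\sqrt{1+2q})$ yields singular parts $A_\pm(t,y)\sqrt{1\mp 2q}$ with closed forms involving $C(\pm t/2)$, and analogously $A'_\pm(t,y)$ for $U'$.

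Applying the transfer theorem at each singularity and combining contributions (with a $(-1)^r$ factor from the $q=-1/2$ term handled via $(-1)^r[t^r]f(t)=[t^r]f(-t)$), then dividing by $c_n\sim 4^n/\sqrt{\pi n^3}$ and using $\sqrt{n^3/(2n-r)^3}\to 2^{-3/2}$, produces an asymptotic identity of the form $\sum_{d\ge0} p_{r,d}\,y^d\sim [t^r]F_U(t,y)$ and an analogous one for $p'_{r,d}$, for explicit closed forms $F_U, F_{U'}$. Multiplying by $x^r$ and summing over $r\ge 1$ collapses to $\sum_r(x/2)^r[t^r]F(t)=F(x/2)$; the crucial simplification $(x/2)C(x/4)=1-\sqrt{1-x}$ then yields explicit algebraic bivariate generating functions. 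The main obstacle I anticipate is the final algebraic step: recasting the natural singularity-analysis output as the specific polynomial-in-$\sqrt{1-x}$ denominators of \eqref{eq:L_up-step} and \eqref{eq:L_down-step}. For the up-step case this is accomplished by rationalizing via the identity $(2-y+y\sqrt{1-x})(2-y-y\sqrt{1-x})=4-4y+xy^2$, which converts the square in the denominator into the polynomial $2(1-y)(1+\sqrt{1-x})-3xy+4xy^2-x^2y^3$; the down-step simplification uses a similar but shorter factorization.
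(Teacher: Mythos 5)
Your proposal is correct in substance but routes both halves of the theorem differently from the paper. For down-steps, you introduce a new generating function $U'(x,y,z)$ via the first-return decomposition; your functional equation and its solution $U'=xyzC(xz)C(z)/(1-yzC(z)-xzC(xz))$ are right (indeed, using $C(w)-wC(w)^2=1$ one checks it equals $xU(1/x,y,xz)$, as reflection demands). The paper avoids setting up $U'$ altogether: it uses the reflection observation that the $(n-s)$th up-step becomes the $(s+1)$st down-step, writes $U(t/z,y,z)=g_2(tC(t))$, and extracts $[x^{n-s}z^n]U$ by a singular expansion in the variable $t$ at $t=1/4$ with $s$ fixed. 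Your approach costs one extra functional equation but is arguably more direct, since it addresses $p'_{r,d}$ for fixed $r$ without the reindexing. For the singularity analysis itself, you import the substitution $x=t/q$, $z=q^2$ from Theorem~\ref{thm:distribution_Dyck}, which forces you to combine contributions from the two singularities $q=\pm 1/2$; this works, but it is heavier than necessary here, because $U$ and $U'$ involve only $C(z)$ and $C(xz)$ (not $C(x^2z)$), so the paper's simpler substitution $x=t/z$ reduces everything to a single singular expansion at $z=1/4$ via $U(t/z,y,z)=g_1(C(z))$. One small caution on your final algebra: the conjugate identity you quote, $(2-y+y\sqrt{1-x})(2-y-y\sqrt{1-x})=4-4y+xy^2$, is the one relevant to Theorem~\ref{thm:distribution_binary}; for the up-step case the singularity analysis naturally produces $y(1-s)(2-y+ys)/(1-y+ys)^2$ with $s=\sqrt{1-x}$, and for down-steps $xy/(1-y+s)^2$, and one must expand these squares (e.g.\ $(1-y+s)^2=2(1-y)(1+s)-x+y^2$) to reach the stated denominators. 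This is routine and does not affect the validity of the argument.
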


\begin{proof}
Making the change of variable $x=t/z$ in the second expression in Theorem~\ref{thm:U}, we can write
$U(t/z,y,z)=g_1(C(z))=g_2(tC(t))$, where
$$
g_1(w)\coloneqq g_1(t,y,w)=\frac{tyw^2C(t)}{1-tywC(t)},\qquad
g_2(w)\coloneqq g_2(y,z,w)=\frac{ywC(z)^2}{1-ywC(z)}.
$$

Since the singular expansion of $C(z)$ at $z=1/4$ is $C(z)=2-2\sqrt{1-4z}+o(\sqrt{1-4z})$, the singular expansion of $U(t/z,y,z)$, as a function of $z$, at the same point is
$$U(t/z,y,z)=g_1(2)-2g_1'(2)\sqrt{1-4z}+o(\sqrt{1-4z}).$$
Using singularity analysis, when $r$ is fixed and $n\to\infty$, we get
$$ [x^rz^n]U(x,y,z)=[t^rz^{n-r}]U(t/z,y,z)\sim [t^r] g'(2) \frac{ 4^{n-r}}{\sqrt{\pi n^3}}.$$
Dividing by $[x^rz^n]U(x,1,z)=c_n$ and using the approximation~\eqref{eq:Ckapprox}, we obtain
$$\sum_{d\ge1}p_{r,d}\,y^d=\frac{[x^rz^n]U(x,y,z)}{c_n}\sim \frac{1}{4^r}[t^r]g'(2)=[x^r]g'(2)|_{t=x/4},$$
which simplifies to equation~\eqref{eq:L_up-step}.

Similarly, the singular expansion of $U(t/z,y,z)$, as a function of $t$, at $t=1/4$ is
$$U(t/z,y,z)=g_2(1/2)-\frac{1}{2}g_2'(1/2)\sqrt{1-4t}+o(\sqrt{1-4t}).$$
When $s$ is fixed and $n\to\infty$, we get
$$ [x^{n-s}z^n]U(x,y,z)=[t^{n-s}z^s]U(t/z,y,z)\sim [z^s] g_2'(1/2) \frac{ 4^{n-s-1}}{\sqrt{\pi n^3}}.$$
Dividing by $c_n$, and noting that reflection of the path turns the $(n-s)$th up-step into the $(s+1)$st down step, we obtain
$$\sum_{d\ge1}p'_{s+1,d}\,y^d=\frac{[x^{n-s}z^n]U(x,y,z)}{c_n}\sim \frac{[z^s]g_2'(1/2)}{4^{s+1}}=[x^s]\frac{g_2'(1/2)|_{z=x/4}}{4}.$$
Multiplying the generating function on the right by $x$ and simplifying, we obtain equation~\eqref{eq:L_down-step}.
\end{proof}

The limiting distributions of the height of the $r$th up-step and the $r$th down-step for small values of $r$ are plotted on the left of Figure~\ref{fig:dist_upsteps_smallr}. Note that, despite their averages being intimately related by Corollary~\ref{cor:upstep-downstep}, the height of the $r$th up-step can take only finitely many values (since it is bounded by $r$), whereas the height of the $r$th down-step can be arbitrarily large.

Differentiating equation~\eqref{eq:L_up-step} with respect to $y$ and evaluating at $y=1$, we obtain
$$\frac{2}{(1-x)^{3/2}}-\frac{2}{1-x}.$$
Extracting the coefficient of $x^r$, we recover the average given by equation~\eqref{eq:asymtotic_average_r}.
Applying the same operations to equation~\eqref{eq:L_down-step}, we recover
equation~\eqref{eq:asymtotic_average_s} with $s=r-1$.

\section{Plane trees}\label{sec:plane}

Plane trees, sometimes called ordered trees, are rooted trees where each node can have an arbitrary number of children, which are linearly ordered. The number of plane trees with $n+1$ nodes, equivalently $n$ edges, is equal to $c_{n}$. Define the size of a plane tree to be its number of edges.

Unlike for binary trees, where the number of leaves was determined by the number of nodes, the number of leaves of a plane tree of size $n\ge1$ can range between $1$ and $n$. The bivariate generating function $\Na(v,z)$ for plane trees where $v$ marks the number of leaves and $z$ marks the size satisfies 
\begin{equation}\label{eq:Na}
\Na(v,z)=\frac{1}{1-z\Na(v,z)}-1+v,
\end{equation}
from where we obtain the well-known expression
$$\Na(v,z)=\frac{1-(1-v)z-\sqrt{1-2(1+v)z+(1-v)^2z^2}}{2z}=v+\sum_{n\ge1}\sum_{k=1}^n \Na_{n,k} v^kz^n,$$
where the coefficients $\Na_{n,k}=\frac{1}{n}\binom{n}{k}\binom{n}{k-1}$ are the Narayana numbers.

As in the case of binary trees, there is a natural ordering of the leaves of a plane tree from left to right, where the relative order of two leaves is determined by comparing the first node where the paths from the root to these leaves disagree.
We index the leaves starting at $0$, so that the leftmost leaf is the $0$th leaf.

More generally, one can order all the nodes of a plane tree as given by a preorder traversal, defined recursively as follows: start at the root, followed by the nodes of each subtree from left to right, where each subtree is traversed in preorder. When indexing all the nodes, the $0$th node is the root. See the left of Figure~\ref{fig:plane-Dyck} for an example of this indexing.

As before, we define the {\em depth} of any node to be the number of edges in the path from the root to that node. 

\subsection{Enumeration with respect to leaf depths}\label{sec:P}

Let $P(v,x,y,z)$ be the generating function where the coefficient of $v^kx^ry^dz^n$
is the number of plane trees with $n$ edges and $k$ leaves whose $r$th leaf has depth $d$. 
We have the following simple expression.

\begin{theorem}\label{thm:P}
\[ P(v,x,y,z)=\frac{v}{1-yz(\Na(vx,z)-vx+1)(\Na(v,z)-v+1)}. \]
\end{theorem}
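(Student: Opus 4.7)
The plan is to interpret $P(v,x,y,z)$ as the generating function for plane trees carrying a single distinguished leaf, where $v$ marks the total number of leaves, $x$ marks the index of the distinguished leaf (the number of leaves preceding it in preorder), $y$ marks its depth, and $z$ marks the number of edges. I would then decompose each such pair along the \emph{spine}, i.e., the path from the root to the distinguished leaf, in a manner parallel to the proof of Theorem~\ref{thm:B} for binary trees.

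Suppose the distinguished leaf has depth $d$, so the spine consists of nodes $v_0,v_1,\ldots,v_d$ with $v_0$ the root and $v_d$ the distinguished leaf. For each $0\le i<d$, the children of $v_i$ consist of $v_{i+1}$ together with a (possibly empty) ordered sequence $L_i$ of sibling subtrees lying to the left of $v_{i+1}$ and a (possibly empty) ordered sequence $R_i$ of sibling subtrees lying to its right. The crucial observation for the bookkeeping is that, in preorder, every leaf of every $L_i$ precedes the distinguished leaf while every leaf of every $R_i$ follows it. Consequently, the leaves lying in $L_i$-subtrees should receive weight $vx$ and those in $R_i$-subtrees weight $v$.

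An ordered (possibly empty) sequence of plane trees attached as children at a fixed node, with each connecting edge accounted for, is enumerated by $\sum_{m\ge 0}\bigl(z\Na(w,z)\bigr)^m=\frac{1}{1-z\Na(w,z)}$, where $w$ is the appropriate leaf-marking variable. Hence each spine level $i$ contributes a factor $\frac{1}{(1-z\Na(vx,z))(1-z\Na(v,z))}$. Combining this with the weight $v y^d z^d$ coming from the distinguished leaf, its depth, and the $d$ spine edges, and summing over $d\ge 0$ (the $d=0$ term being the single-node tree, which contributes $v$), I get the geometric series
\[
P(v,x,y,z)=\sum_{d\ge 0}v\left(\frac{yz}{(1-z\Na(vx,z))(1-z\Na(v,z))}\right)^{\!d}=\frac{v}{1-\dfrac{yz}{(1-z\Na(vx,z))(1-z\Na(v,z))}}.
\]
To finish, I would rewrite the defining equation $\Na(v,z)=\frac{1}{1-z\Na(v,z)}-1+v$ as $\frac{1}{1-z\Na(v,z)}=\Na(v,z)-v+1$ (and similarly with $v$ replaced by $vx$) and substitute these identities to obtain the stated closed form.

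The step requiring the most care is the bookkeeping in the decomposition: ensuring that the $d$ spine edges, the sibling edges from each $v_i$ to the roots of the $L_i$- and $R_i$-subtrees, the distinguished leaf itself, and the index weighting by $x$ are each counted exactly once, and that the degenerate $d=0$ case is correctly absorbed as the empty-product term of the geometric series. The final algebraic translation via the $\Na$-equation is purely routine.
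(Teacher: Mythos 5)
Your proposal is correct and is essentially the paper's argument: the paper decomposes once at the root (the subtree containing the distinguished leaf contributes $yzP$, flanked by sequences of subtrees weighted with $vx$ on the left and $v$ on the right) and solves the resulting linear equation $P=v+\frac{yz\,P}{(1-z\Na(vx,z))(1-z\Na(v,z))}$, while you unroll that same recursion along the spine into the corresponding geometric series. The bookkeeping of the $L_i$/$R_i$ sibling sequences and the final substitution $\frac{1}{1-z\Na(v,z)}=\Na(v,z)-v+1$ are exactly as in the paper.
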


\begin{proof}
One can think of $P(v,x,y,z)$ as the generating function for plane trees with a distinguished leaf, where $x$ marks the index of this leaf, $y$ marks its depth, and $v$ marks the number of leaves. We will show that it satisfies the equation
\begin{equation}\label{eq:P} P(v,x,y,z)=v+\frac{yz P(v,x,y,z)}{(1-z\Na(vx,z))(1-z\Na(v,z))}.\end{equation}

The tree with one node contributes $v$ to the generating function. In all other trees, there is a subtree $\tau$ of the root that contains the distinguished leaf. The root together with $\tau$ contributes $yz P(v,x,y,z)$. To the left of $\tau$ there is a (possibly empty) sequence of plane trees with no distinguished leaf, but whose leaves affect the indexing of the distinguished leaf, therefore contributing $1/(1-z\Na(vx,z))$ to the generating function. To the right of $\tau$ there is another (possibly empty) sequence of plane trees, contributing $1/(1-z\Na(v,z))$ to the generating function. 
 
Solving~\eqref{eq:P} for $P(v,x,y,z)$ and using equation~\eqref{eq:Na} gives the stated expression.
\end{proof}

For example, the coefficient of $z^3$ in $P(v,x,y,z)$ equals
$$y^3v+\left((y+2y^2)+(y+2y^2)x\right)v^2+(y+yx+yx^2)v^3.$$
The term $(y+2y^2)x$ describes the distribution of the depths of the rightmost leaf in the three plane trees with $3$ edges and $2$ leaves.

To find the average depth of each leaf in plane trees with $n$ edges and $k$ leaves, we compute
\begin{equation} \label{eq:dPvxz}
\dP(v,x,z)=\left.\frac{\partial P(v,x,y,z)}{\partial y}\right|_{y=1}=\frac{1}{v}P(v,x,1,z)\left(P(v,x,1,z)-v\right).
\end{equation}
There is a combinatorial proof of this product formula analogous to that of equation~\eqref{eq:dBxz}: interpret $\dP(v,x,z)$ as counting plane trees with a distinguished leaf $a_1$ and a distinguished non-root node $a_2$ in the path from $a_1$ to the root, split at $a_2$ to decompose the tree into a pair of plane trees with a distinguished leaf, and use that $P(v,x,1,z)$ is the generating function for such trees.

This interpretation also allows us to write 
$$ P(v,x,1,z)=\frac{\Na(v,z)-\Na(vx,z)}{1-x}=v+\sum_{n\ge1}\sum_{k=1}^n \Na_{n,k}v^k[k]_x z^n.$$
Extracting coefficients in equation~\eqref{eq:dPvxz}, we obtain, for $0\le r< k\le n$,
$$[x^rv^kz^n]\dP(v,x,z)=\sum_{j=1}^{n-1}\sum_{i=1}^k \Na_{j,i}\Na_{n-j,k+1-i}\min(r+1,k-r,i,k+1-i)+\Na_{n,k}.$$
Dividing this expression by $\Na_{n,k}$ gives the average depth of the $r$th leaf over all plane trees with $n$ edges and $k$ leaves.

The distribution of the depth of the $0$th (leftmost) leaf in plane trees is given by 
\begin{equation}\label{eq:x0plane} P(1,0,y,z)=\frac{1}{1-yzC(z)}, \end{equation}
which coincides with equation~\eqref{eq:x0}. In Section~\ref{sec:bijections} we will explain this bijectively.

For $r>0$, not all plane trees of a given size have an $r$th leaf, and so studying the distribution of the depth of the $r$th leaf is less meaningful, unless one also restricts to trees with a given number of leaves. Instead, we will consider two variations of the problem of enumerating plane trees by leaf depths.

The first one is to study the depths of all the nodes of a plane tree indexed in preorder, rather than just leaves. We will see in Section~\ref{sec:bijections} that this is equivalent to studying the heights of up-steps in Dyck paths, which was done in Section~\ref{sec:Dyck_upsteps}.

The second variation is to restrict to trees where no node has exactly one child, which are known as {\em Schr\"oder trees}. The purpose of this restriction is to make the number of trees with a given number of leaves be finite. We will discuss Schr\"oder trees in Sections~\ref{sec:Schroeder} and~\ref{sec:distribution_Schroeder}.

\subsection{Bijections between plane trees, binary trees, and Dyck paths}\label{sec:bijections}

Equations~\eqref{eq:x0} and~\eqref{eq:x0plane} describe the distribution not only of the depth of the leftmost leaf on binary trees and on plane trees, but also of several other well-known statistics on Catalan objects.
On Dyck paths, two such statistics are the number of returns to the $x$-axis, studied in~\cite{deutsch_dyck_1999}, and the length of the initial run of up-steps, studied in~\cite{deutsch_involution_1999}. On plane trees, another such statistic is the number of children of the root, studied in~\cite{dershowitz_enumerations_1980}. Next we describe some known bijections that explain these correspondences.

Recall the standard bijection between plane trees with $n$ edges and Dyck paths of semilength $n$, resulting from traversing the edges of the tree in preorder: start at the root and, for each child from left to right, take the edge to that child, recursively traverse the corresponding subtree, and take the edge back to the root. To construct the Dyck path, draw an up-step every time that an edge is traversed from the parent to the child, and a down-step every time that an edge is traversed from the child to the parent. See Figure~\ref{fig:plane-Dyck} or \cite[Fig.\ 5-14]{stanley_enumerative_1999} for an example.

\begin{figure}[h]
\centering
\begin{tikzpicture}[scale=.9]
\draw (0,0)--(0,.5);
\fill (0,0) circle (.1) coordinate (v0);
\fill (-1.2,-1) circle (.1) coordinate (v1);
\fill (0,-1) circle (.1) coordinate (v2);
\fill (1.2,-1) circle (.1) coordinate (v4);
\fill (0,-2) circle (.1) coordinate (v3);
\fill (.9,-2) circle (.1) coordinate (v5);
\fill (1.5,-2) circle (.1) coordinate (v6);
\draw (v1)--(v0)--(v4)--(v5);
\draw (v4)--(v6);
\draw (v0)--(v2)--(v3);
\draw[->] (-.3,-.1)--(-.6,-.4);
\draw[->] (-.4,-.45) to [bend left=60] (-.1,-.5);
\draw[<-] (.4,-.45) to [bend right=60] (.1,-.5);
\draw[<-] (.3,-.1)--(.7,-.4);
\foreach \i in {0} {
\draw (v\i) node[above left,scale=.7] {$\i$};
}
\foreach \i in {1,2,3,5} {
\draw (v\i) node[left,scale=.7] {$\i$};
}
\foreach \i in {4,6} {
\draw (v\i) node[right,scale=.7] {$\i$};
}

\begin{scope}[shift={(4,-1.5)},scale=.7]
\draw[->] (-2.25,.5)--(-1.5,.5);
\draw[dotted] (-.5,0)--(12.5,0);
\draw[dotted] (0,-.5)--(0,2.5);
     \draw[thick](0,0) circle(1.2pt) \up\dn\up\up\dn\dn\up\up\dn\up\dn\dn;
     \draw (.5,.5) node[above left=-1pt,scale=.7] {$1$};
     \draw (2.5,.5) node[above left=-1pt,scale=.7] {$2$};
     \draw (3.5,1.5) node[above left=-1pt,scale=.7] {$3$};
     \draw (6.5,.5) node[above left=-1pt,scale=.7] {$4$};
     \draw (7.5,1.5) node[above left=-1pt,scale=.7] {$5$};
     \draw (9.5,1.5) node[above left=-1pt,scale=.7] {$6$};
\end{scope}
\end{tikzpicture}
\caption{The standard bijection between plane trees and Dyck paths. }
\label{fig:plane-Dyck}
\end{figure}
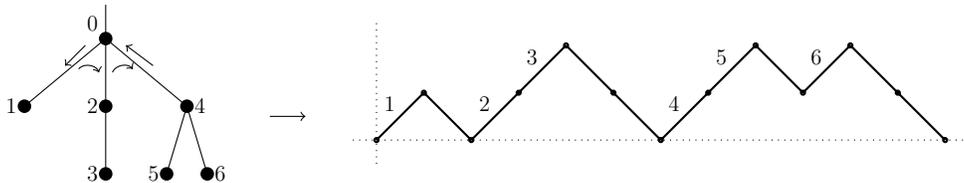

Under this bijection, the number of children of the root of the tree becomes the number of returns of the path, and the depth of the leftmost leaf of the tree becomes the length of the initial run of up-steps. Additionally, the depth of the $r$th node of the tree in preorder (for $1\le r\le n$, counting the root as the $0$th node) becomes the height of the $r$th up-step of the path. It follows that, for $1\le r\le n$, the coefficient of $x^ry^dz^n$ in the generating function $U(x,y,z)$ from Theorem~\ref{thm:U} is also the number of plane trees with $n$ edges whose $r$th node in preorder has depth $d$.

Next we describe two related bijections $f_R$ and $f_L$ from binary trees of size $n$ to Dyck paths of semilength $n$. The depth of the leftmost leaf is mapped to the number of returns to the $x$-axis by $f_R$, and to the the length of the initial run of up-steps by $f_L$.

The bijection $f_R$ (respectively, $f_L$) is defined as follows. Given a binary tree, we traverse its edges in preorder,
and construct a Dyck path by drawing an up-step every time that 
we traverse an edge from a parent to its right (resp., left) child, and a down-step every time that we traverse an edge from a right (resp., left) child to its parent.
See Figure~\ref{fig:binary-Dyck} for examples of both bijections.

\begin{figure}[h]
\centering
\begin{tikzpicture}[scale=.9]
\draw (0,0)--(0,.5);
\fill (0,0) circle (.1) coordinate (v0);
\fill (-1,-1) circle (.1) coordinate (v1);
\fill (1,-1) circle (.1) coordinate (v2);
\fill (-1.7,-2) circle (.1) coordinate (v3);
\fill (-.3,-2) circle (.1) coordinate (v4);
\fill (.3,-2) circle (.1) coordinate (v5);
\fill (1.7,-2) circle (.1) coordinate (v6);
\fill (-2.1,-3) circle (.1) coordinate (v7);
\fill (-1.3,-3) circle (.1) coordinate (v8);
\fill (-.7,-3) circle (.1) coordinate (v9);
\fill (.1,-3) circle (.1) coordinate (v10);
\draw[->] (-.2,0)--(-.6,-.4);
\draw[->] (-.3,-.5) to [bend left=60] (.3,-.5);
\draw[<-] (.2,0)--(.6,-.4);
\draw (v0)--(v1)--(v3)--(v7);
\draw[blue,thick] (v0)--node[below left=-1pt,scale=.5]{$7$} node[right,scale=.5]{$10$}(v2)--node[below left=-1pt,scale=.5]{$8$} node[right,scale=.5]{$9$}(v6);
\draw[blue,thick] (v3)--node[below left=-1pt,scale=.5]{$1$} node[right,scale=.5]{$2$} (v8);
\draw[blue,thick] (v1)--node[below left=-1pt,scale=.5]{$3$} node[right,scale=.5]{$6$} (v4)--node[below left=-1pt,scale=.5]{$4$} node[right,scale=.5]{$5$} (v10);
\draw (v4)--(v9);
\draw (v3)--(v8);
\draw (v2)--(v5);
\foreach \i in {0,...,10} {\fill (v\i) circle (.1);}

\begin{scope}[shift={(4,-2)},scale=.7]
\draw[->] (-2,.5)--node[above]{$f_R$} (-1,.5);
\draw[dotted] (-.5,0)--(10.5,0);
\draw[dotted] (0,-.5)--(0,2.5);
     \draw[blue,thick](0,0) circle(1.2pt) \up\dn\up\up\dn\dn\up\up\dn\dn;
\end{scope}

\begin{scope}[shift={(0,-4.3)}]
\draw (0,0)--(0,.5);
\fill (0,0) circle (.1) coordinate (v0);
\fill (-1,-1) circle (.1) coordinate (v1);
\fill (1,-1) circle (.1) coordinate (v2);
\fill (-1.7,-2) circle (.1) coordinate (v3);
\fill (-.3,-2) circle (.1) coordinate (v4);
\fill (.3,-2) circle (.1) coordinate (v5);
\fill (1.7,-2) circle (.1) coordinate (v6);
\fill (-2.1,-3) circle (.1) coordinate (v7);
\fill (-1.3,-3) circle (.1) coordinate (v8);
\fill (-.7,-3) circle (.1) coordinate (v9);
\fill (.1,-3) circle (.1) coordinate (v10);
\draw[->] (-.2,0)--(-.6,-.4);
\draw[->] (-.3,-.5) to [bend left=60] (.3,-.5);
\draw[<-] (.2,0)--(.6,-.4);
\draw (v0)--(v2)--(v6);
\draw[red,thick] (v0)--node[left,scale=.5]{$1$} node[below right=-1pt,scale=.5]{$8$}(v1)--node[left,scale=.5]{$2$} node[below right=-1pt,scale=.5]{$5$}(v3)--node[left,scale=.5]{$3$} node[below right=-1pt,scale=.5]{$4$}(v7);
\draw[red,thick] (v2)--node[left,scale=.5]{$9$} node[below right=-1pt,scale=.5]{$10$}(v5);
\draw (v3)--(v8);
\draw (v1)--(v4)--(v10);
\draw[red,thick] (v4)--node[left,scale=.5]{$6$} node[below right=-1pt,scale=.5]{$7$}(v9);
\foreach \i in {0,...,10} {\fill (v\i) circle (.1);}

\begin{scope}[shift={(4,-2)},scale=.7]
\draw[->] (-2,.5)--node[above]{$f_L$} (-1,.5);
\draw[dotted] (-.5,0)--(10.5,0);
\draw[dotted] (0,-.5)--(0,2.5);
\draw[red,thick](0,0) circle(1.2pt) \up\up\up\dn\dn\up\dn\dn\up\dn;
\end{scope}
\end{scope}

\end{tikzpicture}
\caption{Two bijections between binary trees and Dyck paths. The depth of the leftmost leaf, which equals 3 in this example, is mapped to the number of returns by $f_R$, and to the length of the initial run of up-steps by $f_L$.}
\label{fig:binary-Dyck}
\end{figure}
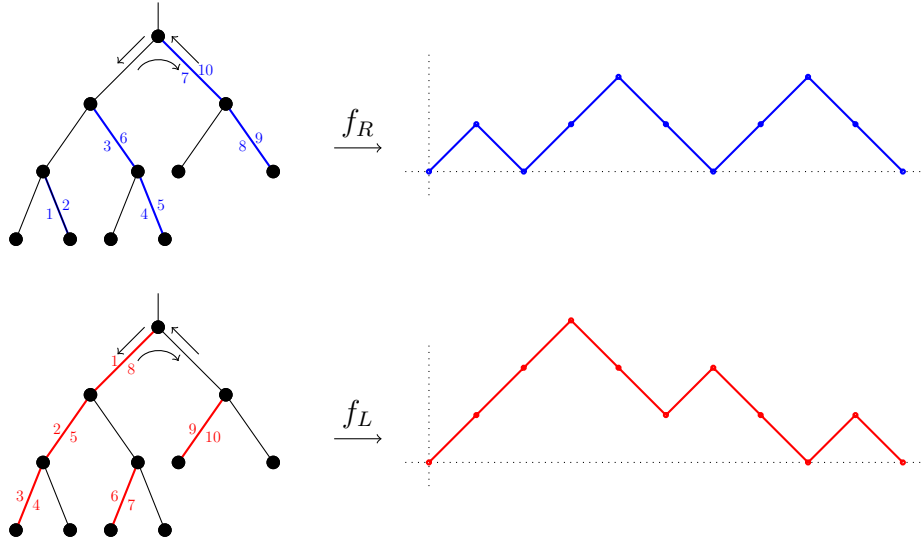

\subsection{Schr\"oder trees}\label{sec:Schroeder}

In this section we consider plane trees where every internal node has at least two children, often referred to as Schr\"oder trees. Let $\tS(z)$ be the generating function for Schr\"oder trees, where $z$ marks the number of leaves. 
The usual decomposition of trees, noting that the root can either be a leaf or have at least two subtrees, gives the equation
\begin{equation}\label{eq:Schroeder}
\tS(z)=z+\frac{\tS(z)^2}{1-\tS(z)}.
\end{equation}
It follows that $\tS(z)=zS(z)$, where
\begin{equation}\label{eq:SchroederGF}
S(z)=\frac{1+z-\sqrt{1-6z+z^2}}{4z}=\sum_{n\ge1}s_{n}z^n.
\end{equation}
Here $s_n$ is the $n$th little Schr\"oder number, and it counts Schr\"oder trees with $n+1$ leaves.

Next we find an expression for the generating function $A(x,y,z)$ where, for $0\le r\le n$, the coefficient of $x^ry^dz^n$ is the number of Schr\"oder trees with $n+1$ leaves whose $r$th leaf has depth $d$.

\begin{theorem}\label{thm:A}
$$A(x,y,z)=\frac{1}{1+y-\dfrac{y}{(1-\tS(z))(1-\tS(xz))}}.$$
\end{theorem}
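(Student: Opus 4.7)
The plan is to mimic the recursive decomposition used in Theorems~\ref{thm:B} and~\ref{thm:P}, adapted to Schr\"oder trees. I will interpret $zA(x,y,z)$ as the generating function for Schr\"oder trees carrying a distinguished leaf, where $z$ marks the total number of leaves (so the factor of $z$ compensates for the paper's indexing in which $z^n$ corresponds to $n+1$ leaves), $x$ marks the index of the distinguished leaf from left to right, and $y$ marks its depth.

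Now decompose such a marked tree in two exclusive cases. If the tree is a single node, then that node is the distinguished leaf, at index $0$ and depth $0$, contributing $z$. Otherwise, the root has at least two children; let $\tau$ be the subtree rooted at the child that contains the distinguished leaf. To the left of $\tau$ is a (possibly empty) sequence of Schr\"oder subtrees whose leaves precede, and thereby shift the index of, the distinguished leaf, contributing $1/(1-\tS(xz))$. To the right of $\tau$ is another (possibly empty) sequence contributing $1/(1-\tS(z))$, since those leaves do not affect the index. The subtree $\tau$ itself contributes $zA(x,y,z)$, and attaching the root together with the edge down to $\tau$ multiplies by $y$ to account for the increase in depth. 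To enforce that the root has at least two children, we forbid the case where both flanking sequences are empty, subtracting $1$. This yields the functional equation
\[
zA(x,y,z) \;=\; z \;+\; y\cdot zA(x,y,z)\left(\frac{1}{(1-\tS(xz))(1-\tS(z))}-1\right).
\]
Cancelling the common factor of $z$ and isolating $A(x,y,z)$ produces
\[
A(x,y,z)\left(1+y-\frac{y}{(1-\tS(xz))(1-\tS(z))}\right)=1,
\]
which is the claimed formula.

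The only delicate point is the index bookkeeping: one must verify that leaves strictly left of $\tau$ should be weighted by $xz$ (because each such leaf shifts the index of the distinguished leaf by $1$) while leaves strictly right of $\tau$ should be weighted by $z$, and that the ``at least two children'' requirement is correctly captured by the $-1$ in $\frac{1}{(1-\tS(xz))(1-\tS(z))}-1$. Beyond this combinatorial care, the rest is routine algebra.
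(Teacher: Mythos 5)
Your proposal is correct and is essentially the paper's own argument: both work with $\tilde A(x,y,z)=zA(x,y,z)$ as the generating function for Schr\"oder trees with a distinguished leaf, derive the same functional equation with the factor $\frac{1}{(1-\tS(xz))(1-\tS(z))}-1$ encoding the flanking sequences and the ``at least two children'' constraint, and solve. The index bookkeeping you flag as delicate is handled exactly as in the paper.
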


\begin{proof}
Let $\tilde{A}(x,y,z)=zA(x,y,z)$, which can be thought of as the generating function for Schr\"oder trees with a distinguished leaf,
where $x$ marks the index of this leaf, $y$ marks its depth, and $n$ marks the number of leaves. We claim that
$$\tilde{A}(x,y,z)=z+y\tilde{A}(x,y,z)\left(\frac{1}{(1-\tS(xz))(1-\tS(z))}-1\right).$$
Indeed, if such a tree has more than one node, then the subtree with the distinguished leaf contributes $y\tilde{A}(x,y,z)$. To the left of this subtree we have a sequence of subtrees that contributes $\frac{1}{1-\tS(xz)}$, since it shifts the indexing of the distinguished leaf. To the right we have a sequence of subtrees that contributes $\frac{1}{1-\tS(z)}$. Finally, we have to subtract the case where the root has only one subtree, which is not possible in a Schr\"oder tree.

Solving for $A(x,y,z)$ yields the stated formula.
\end{proof}

\subsection{The depth of the $r$th leaf in Schr\"oder trees}\label{sec:distribution_Schroeder}

The average depth of each leaf in Schr\"oder trees with $n+1$ leaves can be computed using equation~\eqref{eq:average}. After some simplifications, or directly using a combinatorial interpretation analogous to that of equation~\eqref{eq:dBxz}, we obtain
$$\dA(x,z)\coloneqq\left.\frac{\partial A(x,y,z)}{\partial y}\right|_{y=1}
=A(x,1,z)\left(A(x,1,z)-1\right),$$
where
$$A(x,1,z)=\frac{S(z)-xS(xz)}{1-x}$$
is the generating function for Schr\"oder trees with a distinguished leaf.
With the same manipulations as in Section~\ref{sec:average_depth_binary} but with $s_n$ playing the role of $c_n$, and using that $2\sum_{i=0}^n s_is_{n-i} =s_{n+1}+s_n$ by equation~\eqref{eq:Schroeder}, we obtain
\begin{equation}
\label{eq:coefdA}
[x^rz^n]\dA(x,z)=\frac{r+1}{2}(s_{n+1}+s_n)-s_{n}-2\sum_{i=0}^{r-1}(r-i)s_is_{n-i}.
\end{equation}
Dividing by $s_n$, we deduce the following expression for the average depth of each leaf, as well as its asymptotic behavior, where we define $\rho\coloneqq 3-2\sqrt{2}$. As in Theorem~\ref{thm:average_depths} for binary trees, the average depth of the $r$th leaf for fixed $r$ is finite as $n\to\infty$.

\begin{theorem}\label{thm:average_depths_Schroeder}
For $0\le r\le n$, the average depth of the $r$th leaf over all Schr\"oder trees with $n+1$ leaves equals
$$\frac{(r+1)s_{n+1}}{2s_n}+\frac{r-1}{2}-\frac{2}{s_n}\sum_{i=0}^{r-1}(r-i)s_is_{n-i}.$$
As $n\to\infty$, this average is asymptotically equal to
\begin{multline}\label{eq:average_Schreder_asym_fixedr}
(2+\sqrt{2})(r+1)-1-2\displaystyle\sum_{i=0}^{r-1}(r-i)\rho^{i} s_{i}\\
=\displaystyle\frac{\rho^{r-1}}{2}\binom{r+2}{2}s_{r+1}-\rho^r\binom{r+1}{2}s_{r}+\frac{\rho^{r+1}}{2}\binom{r}{2}s_{r-1}-\frac{1}{2}
\end{multline}
if $r$ is fixed, and to 
$$\frac{\sqrt{1-\rho^2}}{\rho\sqrt{\pi}}\sqrt{r\left(1-\frac{r}{n}\right)}$$
if $r=r(n)$ is such that $r\to\infty$ and $n-r\to\infty$.
\end{theorem}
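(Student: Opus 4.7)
The exact formula in the first display follows immediately from dividing~\eqref{eq:coefdA} by $s_n$.

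For the asymptotics, the key input is the singular expansion of $S(z)$ at its dominant singularity $\rho$. Writing $1-6z+z^2=(\rho-z)(1/\rho-z)$ in~\eqref{eq:SchroederGF} and expanding near $z=\rho$, one finds
$$S(z) = \tfrac{2+\sqrt{2}}{2} - \tfrac{\sqrt{1-\rho^2}}{4\rho}\sqrt{1-z/\rho} + o\!\left(\sqrt{1-z/\rho}\right),$$
so by standard singularity analysis~\cite{flajolet_analytic_2009} $s_n \sim K\,\rho^{-n}n^{-3/2}$ with $K=\sqrt{1-\rho^2}/(8\rho\sqrt{\pi})$. In particular, $s_{n+1}/s_n \to 1/\rho$, and for each fixed $i$, $s_{n-i}/s_n \to \rho^i$ as $n\to\infty$.

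For fixed $r$, plugging these limits into~\eqref{eq:coefdA}/$s_n$ term by term and simplifying with the elementary identity $(1+\rho)/(2\rho)=2+\sqrt{2}$ yields the first expression in~\eqref{eq:average_Schreder_asym_fixedr}. To establish its equality with the closed form on the right, my plan is to verify that both sides, as sequences in $r$, satisfy the second-order recurrence $X_r-2X_{r-1}+X_{r-2}=-2\rho^{r-1}s_{r-1}$ with matching initial values $X_0=1+\sqrt{2}$ and $X_1=1+2\sqrt{2}$. The left-hand side satisfies this by a telescoping argument on $\sum(r-i)\rho^i s_i$; the right-hand side reduces to it after substituting the three-term Schr\"oder recurrence $(r+1)s_r=(6r-3)s_{r-1}-(r-2)s_{r-2}$. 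This algebraic verification is the main computational obstacle of the proof; like~\eqref{eq:coefdB}, the closed form was presumably first discovered with the aid of computer algebra.

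For the regime $r,n-r\to\infty$, I first rewrite~\eqref{eq:coefdA} analogously to~\eqref{eq:coefdB_sum} in the form
$$[x^rz^n]\dA(x,z) = -s_n + 2\sum_{i=0}^{r-1}(i+1)\,s_is_{n-i} + (r+1)\sum_{i=r}^{n-r}s_is_{n-i}.$$
Dividing by $s_n$ and using $s_is_{n-i}/s_n\sim K\,n^{3/2}/\sqrt{i^3(n-i)^3}$ (valid when both $i$ and $n-i$ tend to infinity, with bounded-$i$ contributions negligible, as in the proof of Theorem~\ref{thm:average_depths}), the two sums become Riemann approximations of
$$\int_0^r \frac{dt}{\sqrt{t(n-t)^3}}=\frac{2\sqrt{r}}{n\sqrt{n-r}},\qquad \int_r^{n-r}\frac{dt}{\sqrt{t^3(n-t)^3}}=\frac{4(n-2r)}{n^2\sqrt{r(n-r)}}.$$
A short simplification combines the contributions into $8K\sqrt{r(1-r/n)}$, which equals $\frac{\sqrt{1-\rho^2}}{\rho\sqrt{\pi}}\sqrt{r(1-r/n)}$, as claimed.
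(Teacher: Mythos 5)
Your proposal is correct, and for the exact formula and both asymptotic regimes it follows essentially the same route as the paper: divide \eqref{eq:coefdA} by $s_n$, use the singular expansion of $S(z)$ at $\rho$ to get $s_k\sim \frac{\sqrt{1-\rho^2}}{8\rho\sqrt{\pi}}\,\rho^{-k}k^{-3/2}$ (your constant $K$ agrees with \eqref{eq:sksim}), plug in $s_{n+1}/s_n\to\rho^{-1}$ and $s_{n-i}/s_n\to\rho^i$ for fixed $r$, and run the Riemann-sum/integral computation of \eqref{eq:integral} in the regime $r,n-r\to\infty$; I checked your rewriting of \eqref{eq:coefdA} as $-s_n+2\sum_{i=0}^{r-1}(i+1)s_is_{n-i}+(r+1)\sum_{i=r}^{n-r}s_is_{n-i}$ and both antiderivatives, and they are right. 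The one genuine divergence is the closed form on the right of \eqref{eq:average_Schreder_asym_fixedr}: the paper does not prove it at this point at all, but instead derives it later by differentiating the limit-distribution generating function of Theorem~\ref{thm:distribution_Schroeder} at $y=1$ and extracting $[x^r]$ of the resulting algebraic function. Your alternative---showing that both sides satisfy $X_r-2X_{r-1}+X_{r-2}=-2\rho^{r-1}s_{r-1}$ with $X_0=1+\sqrt2$, $X_1=1+2\sqrt2$---is self-contained and avoids any appeal to Theorem~\ref{thm:distribution_Schroeder}; the telescoping for the left side is immediate, and the initial values and the recurrence check out (e.g.\ $X_2=7\sqrt2-5$, $X_3=84\sqrt2-113$, consistent with Table~\ref{tab:small_r}). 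The only caveat is that you state the verification for the right-hand side via the three-term Schr\"oder recurrence as a plan rather than carrying it out; it is a finite, routine (if tedious) polynomial identity in $s_{r+1},\dots,s_{r-3}$, so this is an acceptable gap, but in a final write-up you should either execute that computation or simply defer the closed form to Theorem~\ref{thm:distribution_Schroeder} as the paper does.
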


\begin{proof}
The exact formula for the average is obtained by dividing equation~\eqref{eq:coefdA} by $s_n$.
To get the expressions when $n\to\infty$, we use the asymptotic estimate 
\begin{equation}\label{eq:sksim} s_k\sim \frac{\sqrt{1-\rho^2}}{8\sqrt{\pi k^3}}\rho^{-k-1}=\frac{\sqrt{3\sqrt{2}-4}}{4\sqrt{\pi k^3}}\rho^{-k-1} \end{equation}
as $k\to\infty$, which is obtained from equation~\eqref{eq:SchroederGF} using standard tools from asymptotic analysis~\cite{flajolet_analytic_2009}. The closed form in equation~\eqref{eq:average_Schreder_asym_fixedr} will follow from Theorem~\ref{thm:distribution_Schroeder}.
\end{proof}

The first few values of the sequence of asymptotic average depths given by Theorem~\ref{thm:average_depths_Schroeder} appear in Table~\ref{tab:small_r}, and are plotted on the right of Figure~\ref{fig:dist_Schroeder_smallr}.

\begin{figure}[h]
\centering
\begin{tikzpicture}[scale=.9]
\begin{axis}[
    axis lines = left,
    xlabel = {depth},
    ymin=0,
]
\addplot[
    color=blue!0!red,
    mark=*,
    ]
    coordinates {
(1, 0.3432) (2, 0.2843) (3, 0.1766) (4, 0.09752) (5, 0.05049) (6, 0.02510) (7, 0.01214) (8, 0.005745) (9, 0.002678) (10, 0.001233) (11, 0.0005618) (12, 0.0002538) (13, 0.0001138) (14, 0.00005076) (15, 0.00002250) (16, 9.936*10^-6) 
    };
   \addlegendentry{$r=0$}
    
\addplot[
    color=blue!14.3!red,
    mark=*,
    ]
    coordinates {
(1, 0.05890) (2, 0.2154) (3, 0.2372) (4, 0.1884) (5, 0.1271) (6, 0.07796) (7, 0.04476) (8, 0.02456) (9, 0.01302) (10, 0.006724) (11, 0.003392) (12, 0.001682) (13, 0.0008200) (14, 0.0003966) (15, 0.0001892) (16, 0.00008960)  
    };
    \addlegendentry{$r=1$}
    
\addplot[
    color=blue!28.6!red,
    mark=*,
    ]
    coordinates {
(1, 0.02021) (2, 0.1025) (3, 0.1777) (4, 0.1947) (5, 0.1683) (6, 0.1257) (7, 0.08555) (8, 0.05408) (9, 0.03252) (10, 0.01872) (11, 0.01045) (12, 0.005707) (13, 0.003028) (14, 0.001573) (15, 0.0008105) (16, 0.0004106) 
 };
    \addlegendentry{$r=2$}
    
\addplot[
    color=blue!42.9!red,
    mark=*,
    ]
    coordinates {
(1, 0.01040) (2, 0.05765) (3, 0.1183) (4, 0.1592) (5, 0.1662) (6, 0.1468) (7, 0.1153) (8, 0.08304) (9, 0.05600) (10, 0.03582) (11, 0.02197) (12, 0.01302) (13, 0.007489) (14, 0.004205) (15, 0.002311) (16, 0.001247) 
    };
   \addlegendentry{$r=3$}
    
\addplot[
    color=blue!57.1!red,
    mark=*,
    ]
    coordinates {
(1, 0.006550) (2, 0.03743) (3, 0.08244) (4, 0.1235) (5, 0.1460) (6, 0.1461) (7, 0.1291) (8, 0.1039) (9, 0.07760) (10, 0.05453) (11, 0.03656) (12, 0.02347) (13, 0.01452) (14, 0.008770) (15, 0.005159) (16, 0.002953) 
    };
    \addlegendentry{$r=4$}

\addplot[
    color=blue!71.4!red,
    mark=*,
    ]
    coordinates {
(1, 0.004594) (2, 0.02663) (3, 0.06074) (4, 0.09653) (5, 0.1231) (6, 0.1343) (7, 0.1297) (8, 0.1140) (9, 0.09270) (10, 0.07071) (11, 0.05120) (12, 0.03539) (13, 0.02350) (14, 0.01513) (15, 0.009453) (16, 0.005752) 
    };
    \addlegendentry{$r=5$}
    
\addplot[
    color=blue!85.7!red,
    mark=*,
    ]
    coordinates {
(1, 0.003452) (2, 0.02015) (3, 0.04691) (4, 0.07717) (5, 0.1033) (6, 0.1196) (7, 0.1232) (8, 0.1158) (9, 0.1011) (10, 0.08274) (11, 0.06371) (12, 0.04703) (13, 0.03338) (14, 0.02283) (15, 0.01497) (16, 0.009770) 
    };
    \addlegendentry{$r=6$}
    
\addplot[
    color=blue!100!red,
    mark=*,
    ]
    coordinates {
(1, 0.002712) (2, 0.01593) (3, 0.03756) (4, 0.06325) (5, 0.08746) (6, 0.1055) (7, 0.1142) (8, 0.1132) (9, 0.1042) (10, 0.09030) (11, 0.07376) (12, 0.05755) (13, 0.04309) (14, 0.03108) (15, 0.02155) (16, 0.01468) 
    };
    \addlegendentry{$r=7$}
    
    \addplot[color=blue!0!red,mark=x,mark options={scale=2}]   coordinates {(2.414213562,0)};
    \addplot[color=blue!14.3!red,mark=x,mark options={scale=2}]   coordinates {(3.828427118,0)};
    \addplot[color=blue!28.6!red,mark=x,mark options={scale=2}]   coordinates {(4.899494922,0)};
    \addplot[color=blue!42.9!red,mark=x,mark options={scale=2}]   coordinates {(5.793939213,0)};
    \addplot[color=blue!57.1!red,mark=x,mark options={scale=2}]   coordinates {(6.577269557,0)};
    \addplot[color=blue!71.4!red,mark=x,mark options={scale=2}]   coordinates {(7.282610240,0)};
    \addplot[color=blue!85.7!red,mark=x,mark options={scale=2}]   coordinates {(7.929372267,0)};
    \addplot[color=blue!100!red,mark=x,mark options={scale=2}]   coordinates {(8.530065214,0)};
\end{axis}
\end{tikzpicture}
\quad 
\begin{tikzpicture}[scale=.9]
\begin{axis}[
    axis lines = left,
    xlabel = \(r\),
    ylabel = {average depth of $r$th leaf},
    ymin=0,
    colormap={my colormap}{
                color=(red)
                color=(blue)
            },
]
\addplot[
scatter,
point meta=x,
    mark=x,mark options={scale=2},
    only marks,
    ]
    coordinates {
    (0, 2.414213562) (1, 3.828427118) (2, 4.899494922) (3, 5.793939213) (4, 6.577269557) (5, 7.282610240) (6, 7.929372267) (7, 8.530065214)
    };
\end{axis}
\end{tikzpicture}
\caption{The limiting distribution (left) and average (right) of the depth of $r$th leaf in Schr\"oder trees, for $0\le r\le 7$.}
\label{fig:dist_Schroeder_smallr}
\end{figure}
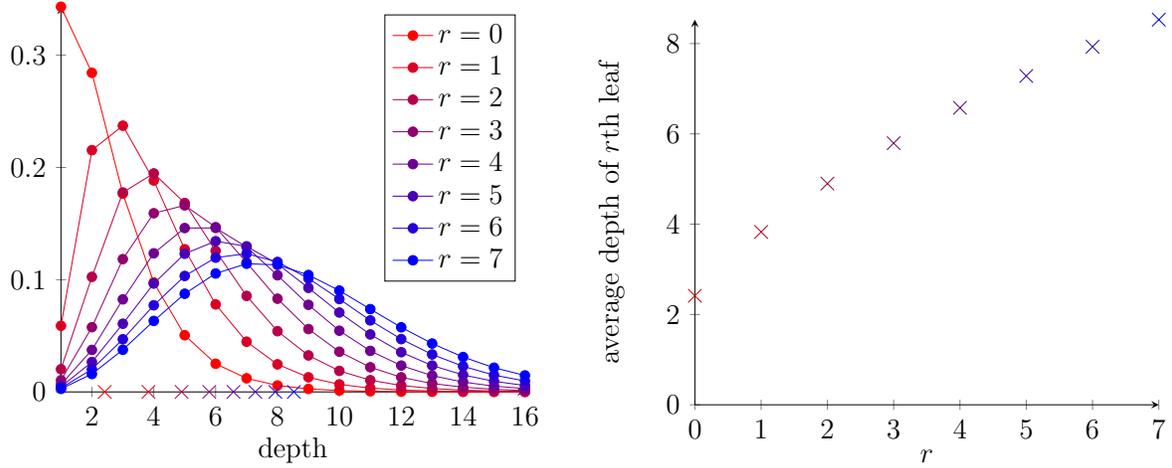

For fixed $r$, we can more generally describe the limiting distribution of the depth of the $r$th leaf as $n\to\infty$. As in the case of binary trees studied in Section~\ref{sec:distribution_binary}, we show that it follows a discrete law.

\begin{theorem}\label{thm:distribution_Schroeder}
Let $r\ge0$ and $d\ge1$. The limit as $n\to\infty$ of the probability that the $r$th leaf in a random Schr\"oder tree with $n+1$ leaves has depth $d$ exists. Denoting this limit by $p_{r,d}$, the generating function $\sum_{r\ge0}\sum_{d\ge1} p_{r,d}\, x^r y^d $ equals 
$$\frac{8y}{9-4y+13y^2+6\sqrt{2}(1-y)^2-(1+2y+5y^2)x+ \rho^{-1}(1-y)(1+3y)\sqrt{(1-x)(1-\rho^2 x)}}.$$
\end{theorem}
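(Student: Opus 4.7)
The plan is to apply singularity analysis to the generating function $A(x,y,z)$ from Theorem~\ref{thm:A}, following the same template used in Theorems~\ref{thm:distribution_binary} and~\ref{thm:distribution_upstep}. As in those cases, I would first perform the change of variable $x = t/z$ so that $[x^r z^n] A(x,y,z) = [t^r z^{n-r}] A(t/z,y,z)$. The effect is that the $z$-dependence of $A(t/z, y, z)$ is concentrated entirely in $\tS(z)$, whose dominant singularity lies at $z = \rho = 3 - 2\sqrt{2}$, while the variable $t$ enters only through $\tS(t)$.

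The next step is to record the singular expansion of $\tS(z)$ at $z = \rho$. Using the factorization $1 - 6z + z^2 = (1 - z/\rho)(1 - \rho z)$ (a consequence of $\rho + \rho^{-1} = 6$) together with $\tS(z) = (1+z - \sqrt{1-6z+z^2})/4$, one obtains
\[
\tS(z) = \tS(\rho) - \frac{\sqrt{1-\rho^2}}{4}\sqrt{1-z/\rho} + O(1-z/\rho),
\]
where $\tS(\rho) = (1+\rho)/4 = 1 - 1/\sqrt{2}$, so that $1 - \tS(\rho) = 1/\sqrt{2}$. Writing $A(t/z,y,z) = h(\tS(z))$ with $h(w) = [1+y - y/((1-w)(1-\tS(t)))]^{-1}$, a direct computation gives
\[
h'(w) = \frac{(1-\tS(t))\,y}{\bigl[(1+y)(1-w)(1-\tS(t)) - y\bigr]^2}.
\]

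Composing expansions and applying the standard transfer theorem yields
\[
[x^r z^n]A(x,y,z) \sim [t^r]\,\frac{\sqrt{1-\rho^2}\,h'(\tS(\rho))}{8\sqrt{\pi n^3}}\,\rho^{-(n-r)}.
\]
Dividing by $s_n \sim \frac{\sqrt{1-\rho^2}}{8\sqrt{\pi n^3}}\rho^{-n-1}$ from~\eqref{eq:sksim} gives
\[
\sum_{d\ge1} p_{r,d}\,y^d = \rho^{r+1}\,[t^r]\,h'(\tS(\rho)),
\]
and summing over $r$ with the substitution $t = \rho x$ produces the bivariate probability generating function as $\rho\,h'(\tS(\rho))\big|_{t=\rho x}$, which is a rational expression in $x$, $y$, and $\tS(\rho x)$.

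The hard part will be the final algebraic simplification to the stated closed form. Writing $1 - \tS(\rho x) = \tfrac{1}{4}(3 - \rho x + R)$ with $R = \sqrt{(1-x)(1-\rho^2 x)}$ (using again $1 - 6\rho x + \rho^2 x^2 = (1-x)(1-\rho^2 x)$), the bivariate generating function becomes a quotient with $R$ appearing both in the numerator factor $1-\tS(\rho x)$ and inside the squared denominator. The key algebraic identity $(3 - \rho x + R)(3 - \rho x - R) = (3-\rho x)^2 - R^2 = 8$ (again equivalent to $\rho^2 - 6\rho + 1 = 0$) lets me eliminate $R$ from the numerator; then rationalizing the remaining denominator and systematically applying the relations $\rho^2 = 6\rho - 1$ and $\rho^{-1} = 3 + 2\sqrt{2}$ to collect the polynomial and surd parts will produce the denominator $9 - 4y + 13y^2 + 6\sqrt{2}(1-y)^2 - (1+2y+5y^2)x + \rho^{-1}(1-y)(1+3y)R$. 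Grouping the resulting terms correctly to match this exact form is the most delicate part; I would organize the simplification by separating contributions into ``rational in $\sqrt{2}$'' and ``multiples of $R$'' and verifying the matching coefficients of $x^i y^j$ in each of the four claimed summands of the denominator.
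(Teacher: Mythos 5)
Your proposal is correct and follows essentially the same route as the paper: the substitution $x=t/z$, the singular expansion of $\tS$ at $z=\rho$, the application of the transfer theorem to $g'(\tau)$ (your $h'(\tS(\rho))$), division by $s_n$, and the resubstitution $t=\rho x$ are exactly the paper's steps, and your identities $(3-\rho x+R)(3-\rho x-R)=8$ and $1+\rho^2=6\rho$ are the right tools for the final simplification. One caveat: carrying that simplification through actually yields the constant block $9-14y+13y^2$ rather than the $9-4y+13y^2$ printed in the theorem (only the former gives $\sum_{d}p_{r,d}=1$ at $y=1$ and reproduces the stated $x=0$ specialization), so do not force your algebra to match the displayed formula literally.
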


\begin{proof}
Making the substitution $x=t/z$ in the generating function from Theorem~\ref{thm:A}, we can write
$A(t/z,y,z)=g(\tS(z))$ where
$$g(w)\coloneqq g(t,y,w)=\frac{1}{1+y-\dfrac{y}{(1-w)(1-\tS(t))}}.$$
The singular expansion of $\tS(z)$ at its dominant singularity $z=\rho$ is
$$\tS(z)=\tau-\frac{\sqrt{1-\rho^2}}{4}\sqrt{1-z/\rho}+o(\sqrt{1-z/\rho}),$$
where $\tau=1-1/\sqrt{2}$.
Thus, the singular expansion of $A(t/z,y,z)$ at $z=\rho$ is
$$A(t/z,y,z)=g(\tau)-\frac{\sqrt{1-\rho^2}}{4}g'(\tau)\sqrt{1-z/\rho}+o(\sqrt{1-z/\rho}).$$

Using singularity analysis, if $r$ is fixed and $n\to\infty$, we have
$$[x^rz^n]A(x,y,z)=[t^rz^{n-r}]A(t/z,y,z)\sim \frac{\sqrt{1-\rho^2}}{4} \frac{\rho^{-(n-r)}}{2\sqrt{\pi n^3}} [t^r]g'(\tau).$$
Dividing by $[x^rz^n]A(x,1,z)=s_n$ and using equation~\eqref{eq:sksim}, the probability generating function of the limiting distribution of the depth of the $r$th leaf in Schr\"oder trees is
$$\sum_{d\ge1} p_{r,d}\,y^d=\frac{[x^rz^n]A(x,y,z)}{s_n}\sim \rho^{r+1}\,[t^r] g'(\tau)= [x^r]\rho g'(\tau)|_{t=\rho x},$$
which simplifies to the stated expression.
\end{proof}

For example, the limiting distribution of the depth of the leftmost leaf in Schr\"oder trees is obtained by setting $x=0$ in Theorem~\ref{thm:distribution_Schroeder}:
$$\sum_{d\ge1} p_{0,d}\,y^d=\frac{2\rho y}{(1-(\sqrt{2}-1)y)^2},$$
which describes a (shifted) negative binomial $\NB(2,\sqrt{2}-1)$. See the left of Figure~\ref{fig:dist_Schroeder_smallr} for the limiting distribution of the depth of the $r$th leaf for small values of~$r$.

Differentiating the expression in Theorem \ref{thm:distribution_Schroeder} with respect to $y$ and evaluating at $y=1$, we obtain
$$\frac{\sqrt{(1-x)(1-\rho^2 x)}}{2\rho(1-x)^{2}}-\frac{1}{2(1-x)}=\frac{(1-\rho^2 x)^{2}}{2\rho\left((1-x)(1-\rho^2 x)\right)^{3/2}}-\frac{1}{2(1-x)}.$$
Extracting the coefficient of $x^r$, noting that $(1-x)(1-\rho^2 x)=1-6\rho x+(\rho x)^2$, we obtain the closed formula in equation~\eqref{eq:average_Schreder_asym_fixedr} for the average depth of the $r$th leaf in the limit.

\section{Triangulations and dissections of polygons}\label{sec:polygons}

In this section we translate the above results regarding leaf depths of binary and Schr\"oder trees into statements about statistics on triangulations and dissections of convex polygons.

\subsection{Triangulations}\label{sec:triang}

A triangulation of a convex $(n+2)$-gon $P_{n+2}$ is a subdivision of $P_{n+2}$ into triangles, obtained by placing $n-1$ noncrossing diagonals.
There is a classical bijection between such triangulations and binary trees of size $n$, where each side and each diagonal of the triangulation becomes a node of the tree (for a total of $2n+1$ nodes). First, fix a side $S$ of $P_{n+2}$ to become the root of the tree.
Our convention is to place this side on the top.
Now, given any triangulation, the triangle that contains $S$ has two more sides (which may be diagonals or sides of $P_{n+2}$); these become the two children of the root. Recursively, if any of these two nodes corresponds to a diagonal of $P_{n+2}$, this diagonal is part of another triangle, whose two other sides become the children of this node, and so on. See Figure~\ref{fig:triangulation} for an example.

This construction produces a binary tree whose $n+1$ leaves correspond to the sides of $P_{n+2}$ other than $S$. Additionally, the ordering of the leaves from left to right corresponds to the counterclockwise ordering of these $n+1$ sides, so that the $0$th and the $n$th leaf are the sides adjacent to $S$.

Next we define a statistic on triangulations. Fix two sides of $P_{n+2}$, say $S$ and $S'$, and suppose there are $r$ sides strictly between $S$ and $S'$ when going from $S$ to $S'$ in counterclockwise direction (and thus $n-r$ when going from $S'$ to $S$). 
For a given triangulation of $P_{n+2}$, consider the number of diagonals that separate $S$ and $S'$, in the sense that $S$ and $S'$ lie on different sides of the diagonal. Equivalently, this is the number of diagonals that are crossed in a straight segment from any point inside $S$ to a point inside $S'$.

Under the above bijection between triangulations and binary trees, side $S'$ corresponds to the $r$th leaf in the tree, and its depth equals one plus the number of diagonals that separate $S$ and $S'$ in the triangulation.
 
Thus, the coefficient of $x^ry^dz^n$ in the generating function
$B(x,y,z)$ from Theorem~\ref{thm:B} is the number of triangulations of $P_{n+2}$ where $S$ and $S'$ are separated by $d-1$ diagonals.
Consequently, the results from Sections~\ref{sec:average_depth_binary} and~\ref{sec:distribution_binary} about the distribution of leaf depths in binary trees can be interpreted in terms of the distribution of the number of diagonals separating two sides in triangulations.

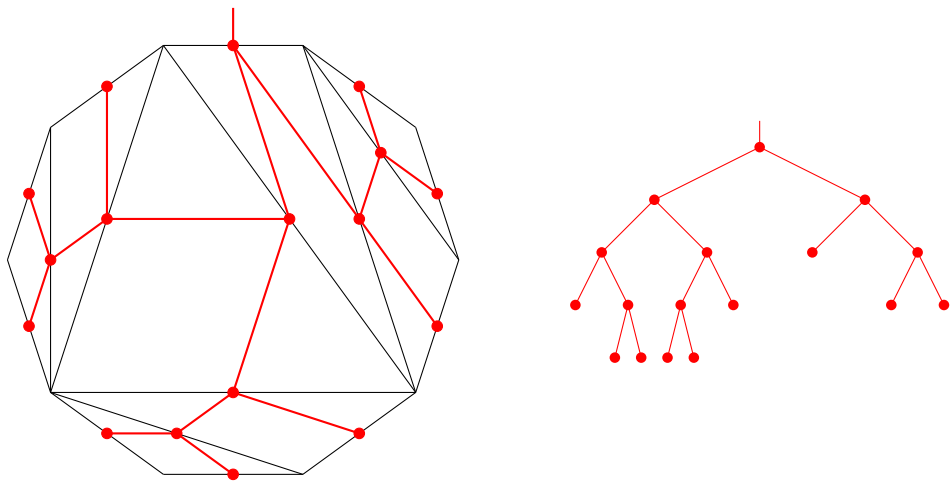
\begin{figure}[h]
\centering
\begin{tikzpicture}[scale=1]
\foreach \i in {1,...,10} {
\draw (72+\i*36:3)coordinate (v\i) -- coordinate (e\i) (108+\i*36:3);
\filldraw[red] (e\i) circle (.07);
}
\draw (v1)--coordinate (v14) (v4);
\draw (v1)--coordinate (v17) (v7);
\draw (v2)--coordinate (v24) (v4);
\draw (v4)--coordinate (v46) (v6);
\draw (v4)--coordinate (v47) (v7);
\draw (v7)--coordinate (v710) (v10);
\draw (v8)--coordinate (v810) (v10);
\foreach \i in {14,17,24,46,47,710,810} {
\filldraw[red] (v\i) circle (.07);
}
\draw[red,thick] (e10)-- ++(0,.5);
\draw[red,thick] (e10)--(v17)--(v14)--(e1);
\draw[red,thick] (v14)--(v24)--(e2);
\draw[red,thick] (v24)--(e3);
\draw[red,thick] (v17)--(v47)--(v46)--(e4);
\draw[red,thick] (v46)--(e5);
\draw[red,thick] (v47)--(e6);
\draw[red,thick] (e10)--(v710)--(e7);
\draw[red,thick] (v710)--(v810)--(e8);
\draw[red,thick] (v810)--(e9);

\begin{scope}[shift={(7,1.5)},scale=.7,red]
\draw (0,0)--(0,.5);
\fill (0,0) circle (.1) coordinate (v0);
\fill (-2,-1) circle (.1) coordinate (v1);
\fill (2,-1) circle (.1) coordinate (v2);
\fill (-3,-2) circle (.1) coordinate (v3);
\fill (-1,-2) circle (.1) coordinate (v4);
\fill (1,-2) circle (.1) coordinate (v5);
\fill (3,-2) circle (.1) coordinate (v6);
\fill (-3.5,-3) circle (.1) coordinate (v7);
\fill (-2.5,-3) circle (.1) coordinate (v8);
\fill (-1.5,-3) circle (.1) coordinate (v9);
\fill (-.5,-3) circle (.1) coordinate (v10);
\fill (2.5,-3) circle (.1) coordinate (v11);
\fill (3.5,-3) circle (.1) coordinate (v12);
\fill (-2.75,-4) circle (.1) coordinate (v13);
\fill (-2.25,-4) circle (.1) coordinate (v14);
\fill (-1.75,-4) circle (.1) coordinate (v15);
\fill (-1.25,-4) circle (.1) coordinate (v16);
\draw (v12)--(v6)--(v2)--(v0)--(v1)--(v3)--(v7);
\draw (v1)--(v4)--(v9)--(v15);
\draw (v4)--(v10);
\draw (v9)--(v16);
\draw (v3)--(v8)--(v13);
\draw (v8)--(v14);
\draw (v2)--(v5);
\draw (v6)--(v11);
\end{scope}
\end{tikzpicture}
\caption{A triangulation of a $10$-gon and the corresponding binary tree.}
\label{fig:triangulation}
\end{figure}

\subsection{Dissections}\label{sec:dissections}

Generalizing the notion of a triangulation, a dissection of a convex polygon is a subdivision into polygons obtained by placing any number of noncrossing diagonals. 
The bijection described in Section~\ref{sec:triang} between triangulations and binary trees easily extends to a bijection bewteen dissections of $P_{n+2}$ and Sch\"oder trees with $n+1$ leaves, as defined in Section~\ref{sec:Schroeder}.
To describe this bijection, again we fix a side $S$ of $P_{n+2}$ to become the root of the plane tree. Each side and each diagonal of the dissection then becomes a node of the tree as follows: the polygon that contains $S$ has at least two more sides (which may be diagonals or sides of $P_{n+2}$); these become the children of the root. Recursively, if any of these nodes corresponds to a diagonal of $P_{n+2}$, this diagonal is part of another polygon, whose other sides become the children of this node, and so on. See Figure~\ref{fig:dissection} for an example.

\begin{figure}[h]
\centering
\begin{tikzpicture}[scale=1]
\foreach \i in {1,...,10} {
\draw (72+\i*36:3)coordinate (v\i) -- coordinate (e\i) (108+\i*36:3);
\filldraw[red] (e\i) circle (.07);
}
\draw (v1)--coordinate (v13) (v3);
\draw (v1)--coordinate (v15) (v5);
\draw (v6)--coordinate (v68) (v8);
\draw (v6)--coordinate (v69) (v9);
\foreach \i in {13,15,68,69} {
\filldraw[red] (v\i) circle (.07);
}
\draw[red,thick] (e10)-- ++(0,.5);
\draw[red,thick] (e10)--(v15)--(v13)--(e1);
\draw[red,thick] (v13)--(e2);
\draw[red,thick] (v15)--(e3);
\draw[red,thick] (v15)--(e4);
\draw[red,thick] (e10)--(e5);
\draw[red,thick] (e10)--(v69)--(v68)--(e6);
\draw[red,thick] (v68)--(e7);
\draw[red,thick] (v69)--(e8);
\draw[red,thick] (e10)--(e9);

\begin{scope}[shift={(8,1.5)},scale=.7,red]
\draw (0,0)--(0,.5);
\fill (0,0) circle (.1) coordinate (v0);
\fill (-3,-1) circle (.1) coordinate (v1);
\fill (-1,-1) circle (.1) coordinate (v2);
\fill (1,-1) circle (.1) coordinate (v3);
\fill (3,-1) circle (.1) coordinate (v4);
\fill (-4,-2) circle (.1) coordinate (v5);
\fill (-3,-2) circle (.1) coordinate (v6);
\fill (-2,-2) circle (.1) coordinate (v7);
\fill (.5,-2) circle (.1) coordinate (v8);
\fill (1.5,-2) circle (.1) coordinate (v9);
\fill (-4.5,-3) circle (.1) coordinate (v10);
\fill (-3.5,-3) circle (.1) coordinate (v11);
\fill (0,-3) circle (.1) coordinate (v12);
\fill (1,-3) circle (.1) coordinate (v13);
\draw (v10)--(v5)--(v1)--(v0)--(v4);
\draw (v5)--(v11);
\draw (v6)--(v1)--(v7);
\draw (v2)--(v0)--(v3)--(v8)--(v12);
\draw (v8)--(v13);
\draw (v3)--(v9);
\end{scope}
\end{tikzpicture}
\caption{A dissection of a $10$-gon and the corresponding Schr\"oder tree.}
\label{fig:dissection}
\end{figure}
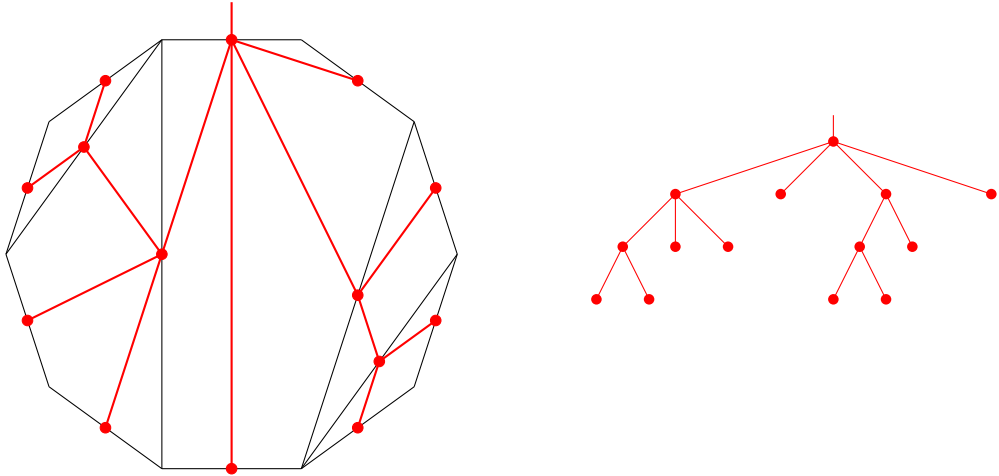

By construction, the resulting plane tree has $n+1$ leaves, and their ordering from left to right corresponds to the counterclockwise ordering of the $n+1$ sides of $P_{n+2}$ other than $S$. Additionally, all the internal nodes of this tree have at least two children.

As we did for triangulations, we fix two sides $S$ and $S'$ of $P_{n+2}$ that have $r$ sides strictly in between, and we define a statistic on dissections that gives the number of diagonals that separate $S$ and $S'$.
Under the above bijection, the depth of the $r$th leaf in the Schr\"oder tree (which corresponds to side $S'$) equals one plus the number of diagonals that separate $S$ and $S'$ in the dissection.

Thus, the coefficient of $x^ry^dz^n$ in the generating function $A(x,y,z)$ from Theorem~\ref{thm:A} is the number of dissections of $P_{n+2}$ where $S$ and $S'$ are separated by $d-1$ diagonals, and the results from Section~\ref{sec:distribution_Schroeder} about leaf depths in Schr\"oder trees translates to results about the number of diagonals separating two sides in dissections.

\section{Noncrossing trees}\label{sec:noncrossing}

Noncrossing trees are plane trees whose nodes lie on the boundary of a circle, and whose edges are straight segments that do not cross.
In a tree with $n$ edges, we label the $n+1$ nodes from $0$ to $n$ in counterclockwise order around the circle. Dulucq and Penaud~\cite{dulucq_cordes_1993} showed that the number of noncrossing trees with $n$ edges is \begin{equation}\label{eq:Cat3}
t_n\coloneqq\frac{1}{2n+1}\binom{3n}{n}.
\end{equation}
In addition, Noy~\cite{noy_enumeration_1998}, together with Flajolet~\cite{flajolet_analytic_2000} and Deutsch~\cite{deutsch_statistics_2002}, enumerated noncrossing trees with respect to various parameters.

\begin{figure}[h]
\centering
\begin{tikzpicture}[scale=.7]
\foreach \i in {0,...,9} {
\draw[dotted] (0,0) circle (3) ;
\draw (90+\i*36:3)coordinate (v\i);
\draw (90+\i*36:3.3) node {$\i$};
\filldraw (v\i) circle (.07);
}
\draw[thick] (v1)--(v0)--(v2)--(v3);
\draw[thick] (v2)--(v4);
\draw[thick] (v0)--(v6)--(v5);
\draw[thick] (v7)--(v6)--(v9)--(v8);

\begin{scope}[shift={(8,1.5)},scale=1.2]
\fill (0,0) circle (.1) coordinate (v0);
\fill (-2,-1) circle (.1) coordinate (v1);
\fill (-.5,-1) circle (.1) coordinate (v2);
\fill (0,-2) circle (.1) coordinate (v3);
\fill (.5,-2) circle (.1) coordinate (v4);
\fill (1.5,-2) circle (.1) coordinate (v5);
\fill (2,-1) circle (.1) coordinate (v6);
\fill (2.5,-2) circle (.1) coordinate (v7);
\fill (3,-3) circle (.1) coordinate (v8);
\fill (3.5,-2) circle (.1) coordinate (v9);
\draw (v1)--(v0)--(v2)--(v3);
\draw (v2)--(v4);
\draw (v0)--(v6)--(v5);
\draw (v6)--(v7);
\draw (v6)--(v9)--(v8);
\foreach \i in {2,6,9} {
\draw[blue,thick,dotted] (v\i)-- ++(0,-.5);
}
\foreach \i in {0,6} {
\draw (v\i) node[above] {$\i$};
}
\foreach \i in {1,2} {
\draw (v\i) node[left] {$\i$};
}
\foreach \i in {3,4,5,7,8} {
\draw (v\i) node[below] {$\i$};
}
\foreach \i in {9} {
\draw (v\i) node[right] {$\i$};
}
\draw (5,1) node[blue] {depth};
\foreach \i in {0,...,3}{
\draw (5,-\i) node[blue] {$\i$};
}
\end{scope}
\end{tikzpicture}
\caption{A noncrossing tree with 9 edges and its recursive decomposition (right). The dotted lines under nodes 2, 6, and 9 separate the two (possibly empty) noncrossing subtrees rooted at these nodes.}
\label{fig:noncrossing}
\end{figure}
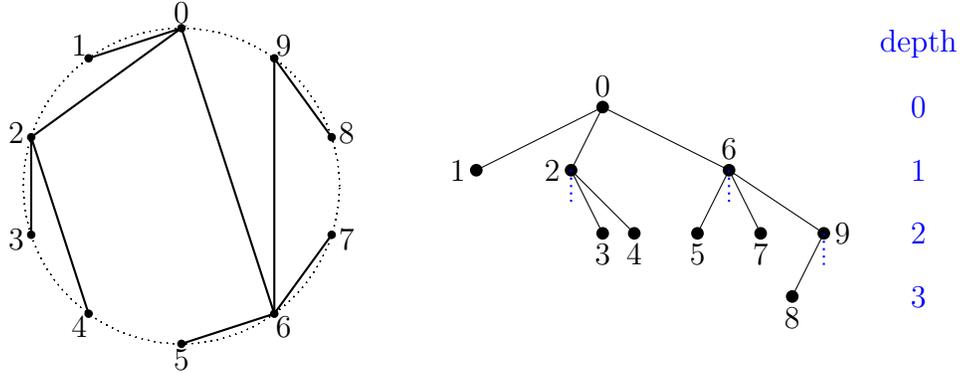

It is convenient to think of noncrossing trees as being rooted at node $0$. Then each child of this node is itself the root of two noncrossing trees, one on each side of the edge coming from node $0$;
see Figure~\ref{fig:noncrossing} for an example.
Letting $T(z)$ denote the generating function for noncrossing trees where $z$ marks the number of edges, this decomposition gives the equation
$$T(z)=\frac{1}{1-zT(z)^2},$$
which can be rewritten as 
\begin{equation}\label{eq:T}
T(z)-1=zT(z)^3,
\end{equation}
from where the expression~\eqref{eq:Cat3} for the coefficients of $T(z)$ follows by Lagrange inversion. Note that $T(z)$  is also the generating function for ternary trees.

\subsection{Enumeration with respect to node depths}

Next we enumerate noncrossing trees with respect to the depth of each node, defined as the number of edges in the path from the root~$0$. Let $G(x,y,z)$ be the generating function where the coefficient of $x^ry^dz^n$ is the number of noncrossing trees with $n$ edges where node $r$ has depth $d$.

\begin{theorem}\label{thm:N}
$$G(x,y,z)=\frac{T(z)+y(1-T(z))T(xz)}{1-yzT(z)T(xz)(T(z)+xT(xz))}.$$
\end{theorem}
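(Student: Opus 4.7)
The plan is to interpret $G(x,y,z)$ as the generating function for pairs $(T,v)$, where $T$ is a noncrossing tree and~$v$ a distinguished node, weighted by $x^{r(v)}y^{d(v)}z^{n(T)}$, with $r(v)$ the CCW label of~$v$, $d(v)$ its depth, and $n(T)$ the number of edges. Equivalently, marking each non-root node~$w$ by a factor~$x$ whenever $w\le v$ in CCW order gives total $x$-exponent equal to $r(v)$. This mirrors the ``distinguished node'' setup used in the proofs of Theorems~\ref{thm:B}, \ref{thm:P} and~\ref{thm:A}.

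I would decompose pairs according to the location of~$v$. If $v=0$, the contribution is~$T(z)$. Otherwise~$v$ lies in a unique branch of the root, at some child~$u_1$; the CCW-preceding branches of~$0$ contribute $1/(1-xz\,T(xz)^{2})$ (all their non-root nodes precede~$v$), while the CCW-following branches contribute~$T(z)$. The branch at~$u_1$ then splits according to whether~$v$ lies in the right subtree of~$u_1$ (which is rooted at~$u_1$, so this case includes $v=u_1$) or strictly in the left subtree with $v\ne u_1$. In the right-subtree case, the left subtree of~$u_1$ contributes~$T(xz)$ (all nodes marked), the edge plus~$u_1$ contributes~$xz$, and the right subtree containing~$v$ contributes~$y\,G$ (the factor~$y$ accounting for the one-edge depth increase from~$u_1$), giving a total of $xyz\,T(xz)\,G$.

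The main obstacle will be the case where $v$ lies in the left subtree of~$u_1$, because the internal CCW labeling of that subtree, viewed as a noncrossing tree rooted at~$u_1$, is reversed relative to the full tree's CCW. To handle this, I would invoke the reflection involution on noncrossing trees (reflection about the axis through node~$0$ sends a node at CCW label~$r\ge 1$ to label $n+1-r$ and preserves depth), which yields the identity
\[
x\,G(1/x,y,xz) \;=\; x\,T(xz)+G(x,y,z)-T(z).
\]
This identity lets me translate the ``reversed'' left-subtree generating function into~$G$, and the left-subtree case (with $v\ne u_1$) then contributes $yz\,T(z)\bigl(G-T(z)\bigr)$.

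Summing the two cases gives a branch generating function of $yz\,G\bigl(T+xT(xz)\bigr)-yz\,T^{2}$. Combining with the before/after factors yields
\[
G-T \;=\; \frac{T}{1-xz\,T(xz)^{2}}\bigl[yz\,G\bigl(T+xT(xz)\bigr)-yz\,T^{2}\bigr].
\]
Solving for~$G$ and simplifying using the identity $T(xz)\bigl(1-xz\,T(xz)^{2}\bigr)=1$ (which is equation~\eqref{eq:T} applied with $z$ replaced by~$xz$) will produce the claimed closed form.
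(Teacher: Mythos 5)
Your decomposition is the same as the paper's: condition on whether the distinguished node is the root and, if not, on which branch of the root contains it, splitting that branch at its root into the two subtrees; your two cases produce exactly the bracket $yz(G(x,y,z)-T(z))T(z)+xyzT(xz)G(x,y,z)$ of the paper's functional equation~\eqref{eq:N} (your before-factor $1/(1-xz\,T(xz)^2)$ is just $T(xz)$ by~\eqref{eq:T}), and solving it with $T(z)-1=zT(z)^3$ gives the stated closed form. The one place you deviate is the claimed ``reversal'' of the left subtree's labeling, and this obstacle does not actually exist: the root $u_1$ of that subtree sits at the \emph{counterclockwise end} of the subtree's arc, so reading its nodes counterclockwise starting from $u_1$ wraps around to the first node of the arc and is order-preserving, which means $G(x,y,z)-T(z)$ applies to the left subtree directly, exactly as in the paper. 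Your reflection identity $xG(1/x,y,xz)=xT(xz)+G(x,y,z)-T(z)$ is nonetheless correct, and since it states precisely that the reversed and direct markings have the same generating function, invoking it is harmless --- you land on the correct contribution $yzT(z)\bigl(G(x,y,z)-T(z)\bigr)$ either way.
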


\begin{proof}
Thinking of $G(x,y,z)$ as the generating function for noncrossing trees with a distinguished node, where $x$ marks the index of this node, and $y$ marks its depth, we obtain
\begin{equation}\label{eq:N}
G(x,y,z)=T(z)+T(xz)T(z)\left[yz(G(x,y,z)-T(z))T(z)+xyzT(xz)G(x,y,z)\right].
\end{equation}

To see this, consider the possible options for the location of the distinguished node. If the distinguished node is the root $0$, we get a contribution of $T(z)$.
If it is not the root, denote the subtrees of the root by $S_1,S_2,\dots,S_k$ in counterclockwise order.
Each $S_i$ consists of an ordered pair $(L_i,R_i)$ of noncrossing subtrees with a common root $a_i$. The indices of the nodes in each subtree are contiguous, with $a_i$ being the largest in $L_i$ and smallest in $R_i$.
Let $S_j$ be the subtree containing the distinguished node. 
Node $0$ together with the subtrees $S_i$ with $i<j$ form a noncrossing tree, which contributes a factor $T(xz)$ to the generating function, as it shifts the index of the distinguished node by the size of this tree. Similarly, node $0$ together with the subtrees $S_i$ with $i>j$ form a noncrossing tree, which contributes a factor $T(z)$ to the generating function. 

Finally, to compute the contribution of the subtree $S_j$, we consider two cases. If the distinguished node is in $L_j$ (not including its root $a_j$), we get a contribution $yz(G(x,y,z)-T(z))T(z)$, where $yz$ reflects the increase in depth and size caused by the edge from $0$ to $a_j$, the factor $G(x,y,z)-T(z)$ comes from $L_j$ having a distinguished node that is not the root, and the factor $T(z)$ comes from $R_j$.   If the distinguished node is in $R_j$ (including its root $a_j$), we get a contribution $xyzT(xz)G(x,y,z)$, where now $xT(xz)$ comes from the indexing shift caused by $L_j$ and the node $0$, and $G(x,y,z)$ comes from $R_j$. 

Solving~\eqref{eq:N} for $G(x,y,z)$ and using equation~\eqref{eq:T} yields the stated formula.
\end{proof}

\subsection{The depth of node $r$}

To find the average depth of each node in noncrossing trees, we compute 
\begin{equation} \label{eq:dG}
\dG(x,z)\coloneqq\left.\frac{\partial G(x,y,z)}{\partial y}\right|_{y=1}
=xz\left(\frac{T(z)^2-xT(xz)^2}{1-x}\right)^2,
\end{equation}
after some simplifications using equation~\eqref{eq:T}.
Denote by $t'_n\coloneqq[z^n]T(z)^2$ the number of pairs of noncrossing trees with $n$ edges in total. 
We think of such pairs as sharing a common root, so that they have $n+1$ nodes, and refer to them as {\em double noncrossing trees}. 
Then
\begin{equation}\label{eq:T2x} \frac{T(z)^2-xT(xz)^2}{1-x}=\sum_{n\ge0} t'_n [n+1]_x z^n \end{equation}
is the generating function double noncrossing trees with a distinguished node.
By Lagrange inversion using equation~\eqref{eq:T}, we have $$t'_n= \frac{1}{n+1}\binom{3n+1}{n}.$$ 

Equation~\eqref{eq:dG} has a direct combinatorial explanation similar to that of equation~\eqref{eq:dBxz}. First we interpret $\dG(x,z)$ as counting noncrossing trees with a distinguished node $a_1$ (whose index is marked by $x$) and another distinguished node $a_2\neq a_1$ in the path from $a_1$ to the root. By splitting at $a_2$, we obtain a double noncrossing tree rooted at $a_2$ with a distinguished node $a_1$, contributing~\eqref{eq:T2x} to the generating function, plus a non-empty noncrossing tree with a distinguished leaf $a_2$. Thus, to prove equation~\eqref{eq:dG} combinatorially, it suffices to show that the generating function for non-empty noncrossing trees with a distinguished leaf (whose index is marked by $x$) is 
\begin{equation}\label{eq:distinguished_leaf} xz\frac{T(z)^2-xT(xz)^2}{1-x}. \end{equation}

We do this by reversing the roles of the root and the distinguished leaf, which gives a bijection between such trees and noncrossing trees with a distinguished node (the former leaf) whose root (the former distinguished leaf) has degree one. Additionally, indexing the nodes of the new tree in clockwise direction around the circle, with the new root being node~$0$, the index of the distinguished node in the new tree equals the index of the distinguished leaf in the old tree. Finally, after removing the root and the unique edge incident to it, which contributes $xz$, we are left with a double noncrossing tree with a distinguished node, which contributes the expression~\eqref{eq:T2x}. This explains the formula~\eqref{eq:distinguished_leaf}, and thus completes the combinatorial proof of equation~\eqref{eq:dG}.

Extracting coefficients in equation~\eqref{eq:dG}, $$[z^n]\dG(x,z)=x\sum_{i=0}^{n-1} t'_i t'_{n-1-i} [i+1]_x[n-i]_x,$$
and so, for $1\le r\le n/2$,
\begin{align}\label{eq:coefdG}
[x^rz^n]\dG(x,z)&=2\sum_{i=1}^{r-1} i t'_{i-1} t'_{n-i} + r\sum_{i=r}^{n-r+1}t'_{i-1}t'_{n-i}\\
&=r(t'_n-t_n)-2\sum_{i=1}^{r-1}(r-i)t'_{i-1}t'_{n-i},
\label{eq:coefdG_smallr}
\end{align}
where in the last equality we used the identity $\sum_{i=1}^n t'_{i-1} t'_{n-i}=t'_n-t_n$, which follows by equating coefficients  in $z(T(z)^2)^2=T(z)^2-T(z)$.
By equation~\eqref{eq:average}, dividing \eqref{eq:coefdG} or \eqref{eq:coefdG_smallr} by $t_n$ yields the average depth of the node $r$ over all noncrossing trees with $n$ edges. Next we describe the asymptotic behavior of this average.

\begin{theorem}\label{thm:average_depths_noncrossing}
Suppose that $1\le r\le n$. As $n\to\infty$, the average depth of node $r$ over all noncrossing trees with $n$ edges is asymptotically equal to
$$\begin{cases}
2r-6\displaystyle\sum_{i=1}^{r-1}(r-i)t'_{i-1}\frac{4^i}{27^i}
 & \text{if $r$ is fixed},\smallskip\\
\displaystyle\frac{8}{\sqrt{3\pi}}\sqrt{r\left(1-\frac{r}{n}\right)} & \text{if $r=r(n)$ is such that $r\to\infty$ and $n-r\to\infty$}.
\end{cases}$$
\end{theorem}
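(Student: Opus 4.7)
The plan is to follow the template of the proof of Theorem~\ref{thm:average_depths}, replacing the Catalan-number asymptotics by the corresponding asymptotics for $t_n$ and $t'_n$. First I would extract these via singularity analysis of the ternary-tree generating function $T(z)$. From $T=1+zT^3$, one reads off the dominant singularity $z_0=4/27$ with $T(z_0)=3/2$, and a routine local-inversion computation gives the singular expansion
$$T(z)=\frac{3}{2}-\frac{\sqrt{3}}{2}\sqrt{1-27z/4}+O(1-27z/4).$$
Squaring and applying standard transfer theorems from~\cite{flajolet_analytic_2009} yields
$$t_n\sim \frac{\sqrt{3}}{4\sqrt{\pi n^3}}\left(\frac{27}{4}\right)^{\!n},\qquad t'_n\sim \frac{3\sqrt{3}}{4\sqrt{\pi n^3}}\left(\frac{27}{4}\right)^{\!n},$$
so in particular $t'_n/t_n\to 3$ and $t'_{n-i}/t_n\to 3(4/27)^i$ for each fixed $i$.

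For fixed $r$, I would divide equation~\eqref{eq:coefdG_smallr} by $t_n$ and take $n\to\infty$ termwise. The first piece gives $r(t'_n-t_n)/t_n\to 2r$, while the finite sum contributes $6\sum_{i=1}^{r-1}(r-i)\,t'_{i-1}(4/27)^i$. Subtracting yields the first formula of the theorem.

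For $r=r(n)$ with $r,n-r\to\infty$, I would work directly from equation~\eqref{eq:coefdG}, dividing by $t_n$ and approximating the sums by integrals in the spirit of equation~\eqref{eq:integral}. Using the asymptotic for $t'_k$, one computes
$$\frac{t'_{i-1}t'_{n-i}}{t_n}\sim \frac{1}{\sqrt{3\pi}}\cdot\frac{n^{3/2}}{i^{3/2}(n-i)^{3/2}}$$
(the shift $i-1\to i$ being absorbed in negligible errors for small $i$). Assuming by symmetry $r\le n/2$, the resulting Riemann-sum approximation gives
$$\bar d_r\sim \frac{n^{3/2}}{\sqrt{3\pi}}\left(2\int_0^{r}\frac{dt}{\sqrt{t}\,(n-t)^{3/2}}+r\int_{r}^{n-r}\frac{dt}{t^{3/2}(n-t)^{3/2}}\right).$$
Both integrals evaluate in closed form via the substitution $t=n\sin^2\theta$, producing $\tfrac{2\sqrt{r}}{n\sqrt{n-r}}$ and $\tfrac{4(n-2r)}{n^2\sqrt{r(n-r)}}$, respectively; combining and simplifying gives $\tfrac{8}{\sqrt{3\pi}}\sqrt{r(1-r/n)}$.

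The main obstacle is bookkeeping the constant $1/\sqrt{3\pi}$ that emerges from the ratio $t'_{i-1}t'_{n-i}/t_n$: it blends the leading constant $3\sqrt{3}/(4\sqrt{\pi})$ from $T(z)^2$ squared against $\sqrt{3}/(4\sqrt{\pi})$ from $T(z)$, together with the exponential shift $(27/4)^{-1}$ arising because the exponents in $t'_{i-1}t'_{n-i}$ sum to $n-1$ rather than $n$. Once that constant is tracked correctly, the integral computations and final simplification are routine, and the same calculation doubles as a check that the $2r$ contribution and the sum truly produce a finite limit in the fixed-$r$ regime.
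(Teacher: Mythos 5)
Your proposal is correct and follows essentially the same route as the paper: the paper likewise divides equation~\eqref{eq:coefdG_smallr} by $t_n$ and uses $t'_k\sim 3t_k\sim \frac{3\sqrt{3}}{4\sqrt{\pi}k^{3/2}}(27/4)^k$ termwise for fixed $r$, and approximates equation~\eqref{eq:coefdG} by the same two integrals (written with $2r\int_r^{n/2}$ in place of your equivalent $r\int_r^{n-r}$) in the regime $r,n-r\to\infty$. The only cosmetic difference is that you obtain the coefficient asymptotics by singularity analysis of $T(z)$ while the paper cites Stirling's formula; your constants, including the key ratio $t'_{i-1}t'_{n-i}/t_n\sim \frac{1}{\sqrt{3\pi}}\,n^{3/2}/(i^{3/2}(n-i)^{3/2})$, all check out.
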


\begin{proof}
We will use the approximations
$$t_k\sim \frac{\sqrt{3}}{4\sqrt{\pi}k^{3/2}}\left(\frac{27}{4}\right)^k,\quad t'_k\sim \frac{3\sqrt{3}}{4\sqrt{\pi}k^{3/2}}\left(\frac{27}{4}\right)^k$$
as $k\to\infty$, obtained using Stirling's formula.

Consider first the case where $r$ is fixed. Using equation~\eqref{eq:coefdG_smallr}, the average depth of node $r$, for $r$ fixed and $n\to\infty$, is
\[
\frac{[x^rz^n]\dG(x,z)}{t_{n}}=r\left(\frac{t'_n}{t_n}-1\right)-2\sum_{i=1}^{r-1}\frac{(r-i)t'_{i-1}t'_{n-i}}{t_{n}}
\sim 2r-6\sum_{i=1}^{r-1}(r-i)t'_{i-1}\frac{4^i}{27^i}.
\]

If instead $r\to\infty$ and $n-r\to\infty$, then equation~\eqref{eq:coefdG} and a computation analogous to equation~\eqref{eq:integral} give
\[
\frac{[x^rz^n]\dG(x,z)}{t_{n}}\sim \frac{2n^{3/2}}{\sqrt{3\pi}}\left(\int_{0}^{r}\frac{1}{\sqrt{u(n-u)^3}}\,du+r \int_{r}^{n/2}\frac{1}{\sqrt{u^3(n-u)^3}}\,du\right)
=\frac{8}{\sqrt{3\pi}}\sqrt{r\left(1-\frac{r}{n}\right)}.
\qedhere
\]
\end{proof}

The first few values of the sequence of asymptotic average depths given by Theorem~\ref{thm:average_depths_noncrossing} appear in Table~\ref{tab:small_r}, and are plotted on the right of Figure~\ref{fig:dist_noncrossing_smallr}.

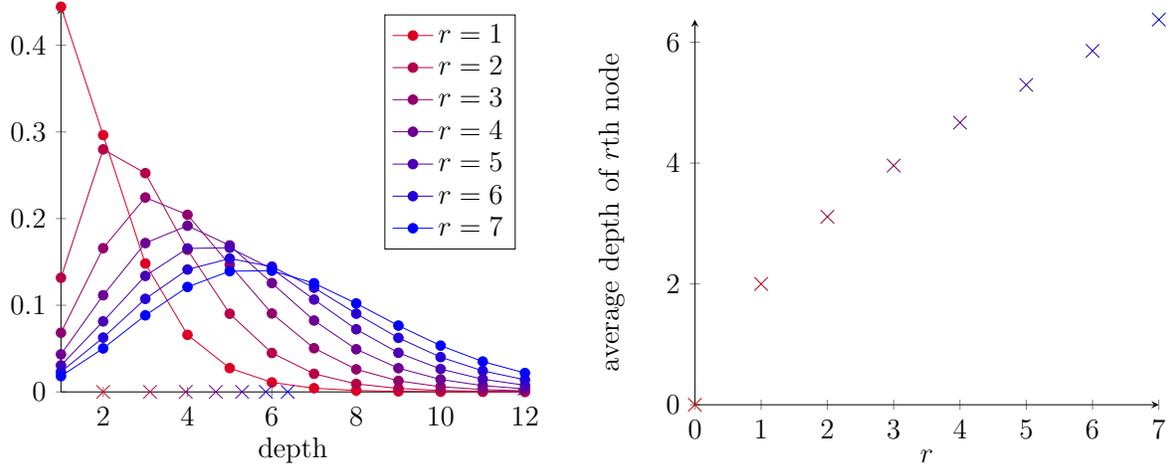
\begin{figure}[h]
\centering
\begin{tikzpicture}[scale=.9]
\begin{axis}[
    axis lines = left,
    xlabel = {depth},
    ymin=0,
]
    
\addplot[
    color=blue!14.3!red,
    mark=*,
    ]
    coordinates {
(1, 4/9) (2, 8/27) (3, 4/27) (4, 16/243) (5, 20/729) (6, 8/729) (7, 28/6561) (8, 32/19683) (9, 4/6561) (10, 40/177147) (11, 44/531441) (12, 16/531441)
    };
    \addlegendentry{$r=1$}
    
\addplot[
    color=blue!28.6!red,
    mark=*,
    ]
    coordinates {
(1, 32/243) (2, 68/243) (3, 184/729) (4, 1076/6561) (5, 592/6561) (6, 884/19683) (7, 3704/177147) (8, 548/59049) (9, 704/177147) (10, 7916/4782969) (11, 3224/4782969) (12, 3868/14348907)
 };
    \addlegendentry{$r=2$}
    
\addplot[
    color=blue!42.9!red,
    mark=*,
    ]
    coordinates {
(1, 448/6561) (2, 1088/6561) (3, 1472/6561) (4, 36208/177147) (5, 25984/177147) (6, 5344/59049) (7, 241024/4782969) (8, 41552/1594323) (9, 60992/4782969) (10, 771712/129140163) (11, 349376/129140163) (12, 153776/129140163)
    };
   \addlegendentry{$r=3$}
    
\addplot[
    color=blue!57.1!red,
    mark=*,
    ]
    coordinates {
(1, 2560/59049) (2, 59264/531441) (3, 91264/531441) (4, 917200/4782969) (5, 2425792/14348907) (6, 1802080/14348907) (7, 10635584/129140163) (8, 6353744/129140163) (9, 3533440/129140163) (10, 16714304/1162261467) (11, 75545600/10460353203) (12, 36545968/10460353203)
    };
    \addlegendentry{$r=4$}

\addplot[
    color=blue!71.4!red,
    mark=*,
    ]
    coordinates {
(1, 146432/4782969) (2, 389120/4782969) (3, 1920512/14348907) (4, 21377536/129140163) (5, 21479744/129140163) (6, 6112256/43046721) (7, 123860800/1162261467) (8, 27983488/387420489) (9, 157346944/3486784401) (10, 2484864512/94143178827) (11, 1376814464/94143178827) (12, 2187764480/282429536481)
    };
    \addlegendentry{$r=5$}
    
\addplot[
    color=blue!85.7!red,
    mark=*,
    ]
    coordinates {
(1, 2981888/129140163) (2, 8092672/129140163) (3, 13864960/129140163) (4, 164108288/1162261467) (5, 178827776/1162261467) (6, 55999936/387420489) (7, 11315320192/94143178827) (8, 2833804736/31381059609) (9, 5869627904/94143178827) (10, 101950811264/2541865828329) (11, 61796785408/2541865828329) (12, 35614329472/2541865828329)
    };
    \addlegendentry{$r=6$}
    
\addplot[
    color=blue!100!red,
    mark=*,
    ]
    coordinates {
(1, 21168128/1162261467) (2, 174915584/3486784401) (3, 308051968/3486784401) (4, 3799146496/31381059609) (5, 39347520512/282429536481) (6, 39504505856/282429536481) (7, 318763256320/2541865828329) (8, 259532832512/2541865828329) (9, 194512148480/2541865828329) (10, 407211191296/7625597484987) (11, 801518414848/22876792454961) (12, 498521490944/22876792454961)
    };
    \addlegendentry{$r=7$}

    \addplot[color=blue!14.3!red,mark=x,mark options={scale=2}]   coordinates {(2,0)};
    \addplot[color=blue!28.6!red,mark=x,mark options={scale=2}]   coordinates {(28/9,0)};
    \addplot[color=blue!42.9!red,mark=x,mark options={scale=2}]   coordinates {(962/243,0)};
    \addplot[color=blue!57.1!red,mark=x,mark options={scale=2}]   coordinates {(30640/6561,0)};
    \addplot[color=blue!71.4!red,mark=x,mark options={scale=2}]   coordinates {(312634/59049,0)};
    \addplot[color=blue!85.7!red,mark=x,mark options={scale=2}]   coordinates {(28017284/4782969,0)};
    \addplot[color=blue!100!red,mark=x,mark options={scale=2}]   coordinates {(823239002/129140163,0)};
\end{axis}
\end{tikzpicture}
\quad 
\begin{tikzpicture}[scale=.9]
\begin{axis}[
    axis lines = left,
    xlabel = \(r\),
    ylabel = {average depth of $r$th node},
    ymin=0,
    colormap={my colormap}{
                color=(red)
                color=(blue)
            },
]
\addplot[
scatter,
point meta=x,
    mark=x,mark options={scale=2},
    only marks,
    ]
    coordinates {
    (0, 0) (1, 2) (2, 28/9) (3, 962/243) (4, 30640/6561) (5, 312634/59049) (6, 28017284/4782969) (7, 823239002/129140163)
    };
\end{axis}
\end{tikzpicture}
\caption{The limiting distribution (left) and average (right) of the depth of node $r$ in noncrossing trees with $n\to\infty$ edges, for $1\le r\le 7$.}
\label{fig:dist_noncrossing_smallr}
\end{figure}

It follows from Theorem~\ref{thm:average_depths_noncrossing} that the average depth of a uniformly random node in noncrossing trees with $n$ edges is asymptotically equal to
$$\int_0^1 \frac{8}{\sqrt{3\pi}}\sqrt{\alpha n (1-\alpha)} \,d\alpha=\sqrt{\frac{\pi n}{3}},$$
which agrees with \cite[Thm.\ 6]{deutsch_statistics_2002}. The exact value of this average depth is $\frac{[z^n]\dG(1,z)}{t_n}$, which can be computed from equation~\eqref{eq:dG}.

For fixed $r$, we can describe the limiting distribution of the depth of node $r$ of a noncrossing tree as its size tends to infinity.

\begin{theorem}\label{thm:distribution_noncrossing}
Let $r\ge1$ and $d\ge0$. The limit as $n\to\infty$ of the probability that node $r$ in a random noncrossing tree with $n$ edges has depth $d$ exists. Denoting this limit by $p_{r,d}$, we have
$$\sum_{r\ge1}\sum_{d\ge0} p_{r,d}\, x^r y^d=\frac{P_2(x,y)T(z)^2+P_1(x,y) T(z)+P_0(x,y)}{\left(\dfrac{y^2(3-y)}{2}-\dfrac{x}{2}\left(4-12 y+ 15y^2-3y^3  \right)+x^2y^3 \right)^2},$$
where 
\begin{align*}
P_2(x,y)&=\frac{1}{9}x^2y(1-y)\left(12y^2 - 5y^3 + y^4 + x(16- 44y+ 36y^2 - 27y^3 + 3y^4) + 4x^2y^3(1+y)\right),\\
P_1(x,y)&=x^2y^2(1-y)^2(2-y)\left(4-y-x(2+y)\right),\\
P_0(x,y)&=xy^2(1-xy)\left(y^3 - x(8- 18y + 12y^2) + x^2y^3\right).
\end{align*}
\end{theorem}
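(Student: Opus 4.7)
The plan is to mimic the singularity-analysis argument used in the proofs of Theorems~\ref{thm:distribution_binary}, \ref{thm:distribution_Dyck}, and~\ref{thm:distribution_Schroeder}, starting from the explicit generating function in Theorem~\ref{thm:N}. The first ingredient is the singular expansion of $T(z)$ at its dominant singularity $z=4/27$: writing $T(z)=3/2+\epsilon$ and substituting into $T-1=zT^3$ yields $1-27z/4=\tfrac{4}{3}\epsilon^2+O(\epsilon^3)$, so that
\begin{equation*}
T(z)=\tfrac{3}{2}-\tfrac{\sqrt{3}}{2}\sqrt{1-27z/4}+O(1-27z/4).
\end{equation*}

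Next I would make the substitution $x=t/z$ in the expression of Theorem~\ref{thm:N}. This turns $T(xz)$ into the constant $T(t)$ as a function of $z$, and using $zT(z)=(T(z)-1)/T(z)^2$ (coming from $T-1=zT^3$), the function $G(t/z,y,z)$ can be rewritten as a rational function $h(T(z))$ whose coefficients depend only on $t$, $y$, and $\tilde T:=T(t)$. Taylor-expanding $h$ around $T=3/2$ gives the singular expansion
\begin{equation*}
G(t/z,y,z)=h(3/2)-\tfrac{\sqrt{3}}{2}\,h'(3/2)\sqrt{1-27z/4}+O(1-27z/4),
\end{equation*}
and a direct computation yields
\begin{equation*}
h'(3/2)=\frac{4\bigl((3-y\tilde T)^2+9y^2t\,\tilde T^3\bigr)}{(6-2y\tilde T-9yt\,\tilde T^2)^2}.
\end{equation*}

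Applying the standard transfer theorem from~\cite{flajolet_analytic_2009} and dividing by $t_n\sim\tfrac{\sqrt{3}}{4\sqrt{\pi}}(27/4)^nn^{-3/2}$ (which follows from equation~\eqref{eq:Cat3} and Stirling's formula), the constants cancel cleanly, and weighting by $x^r$ then summing over $r$ absorbs the factor $(4/27)^r$ into the evaluation $t=4x/27$, giving
\begin{equation*}
\sum_{r\ge0}x^r\sum_{d\ge0}p_{r,d}\,y^d=h'(3/2)\big|_{t=4x/27}.
\end{equation*}
Subtracting the trivial $r=0$ term $p_0(y)=1$ and using the cubic relation $4x\tilde T^3=27(\tilde T-1)$ to eliminate $\tilde T^4$ from the numerator produces the closed form $\tfrac{16xy\tilde T^2(9-xy\tilde T^2)}{(18-6y\tilde T-4xy\tilde T^2)^2}$, whose correctness I would spot-check against the explicit values $p_1(y)=4y/(3-y)^2$ and $p_2(y)=4y(8+y)(1+y)/(9(3-y)^3)$ obtainable from Figure~\ref{fig:dist_noncrossing_smallr}.

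The main obstacle is the final algebraic step: the expression just obtained still carries $\tilde T$ in its denominator, whereas the statement requires a denominator that is a polynomial in $x$ and $y$ alone. To rationalize, I would multiply numerator and denominator by the squared Galois conjugates $D(T_2)^2 D(T_3)^2$ of the factor $D(T)=6-2yT-(4xy/3)T^2$, where $T_2,T_3$ are the other two roots of the minimal polynomial $4xT^3-27T+27=0$ satisfied by $\tilde T$. The product $D(\tilde T)D(T_2)D(T_3)$ is symmetric in the roots, and the resultant identity $\mathrm{Res}_T(4xT^3-27T+27,\,D(T))=(4x)^2 D(\tilde T)D(T_2)D(T_3)$ shows that its square is an explicit polynomial in $x$ and $y$, which after a prefactor normalization must coincide with the stated denominator. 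Collecting powers of $\tilde T$ in the resulting numerator and reducing monomials of degree $\ge3$ via the cubic relation then produces the quadratic $P_2(x,y)\tilde T^2+P_1(x,y)\tilde T+P_0(x,y)$, and matching coefficients of $\tilde T^2,\tilde T^1,\tilde T^0$ yields the three polynomials exhibited in the statement.
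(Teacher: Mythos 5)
Your proposal is correct and follows essentially the same route as the paper: substitute $x=t/z$ into the expression from Theorem~\ref{thm:N}, expand around the singularity $z=4/27$ using $T(z)=\tfrac32-\tfrac{\sqrt3}{2}\sqrt{1-27z/4}+o(\cdot)$, transfer, and divide by $t_n$; your intermediate quantities (the value of $h'(3/2)$, the reduced form $16xy\tilde T^2(9-xy\tilde T^2)/(18-6y\tilde T-4xy\tilde T^2)^2$ after subtracting the $r=0$ term, and the checks against $p_1$ and $p_2$) all agree with what the paper's computation yields. The only differences are cosmetic — you work with $G$ and remove the root's contribution at the end rather than starting from $G(x,y,z)-T(z)$ as in equation~\eqref{eq:N-T}, and you spell out the resultant/Galois-conjugate rationalization that the paper leaves implicit — so this is a faithful, indeed more detailed, version of the paper's argument.
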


\begin{proof}
Since the root always has depth $0$, it is convenient to subtract the contribution of this node and 
consider the generating function 
\begin{equation}\label{eq:N-T} G(x,y,z)-T(z)=\frac{xyzT(z)^2T(xz)^2}{1-yzT(z)T(xz)(T(z)+xT(xz))}.\end{equation}
Even though $T(z)$ is defined implicitly by equation~\eqref{eq:T}, we can use \cite[Thm.\ VI.6]{flajolet_analytic_2009} to obtain its singular expansion at the dominant singularity $z=\rho'$, where $\rho'\coloneqq4/27$, namely
$$T(z)=\frac{3}{2}-\frac{\sqrt{3}}{2}\sqrt{1-z/\rho'}+o(\sqrt{1-z/\rho'}).$$
One can now continue as in the proof of Theorem~\ref{thm:distribution_Schroeder}: make the substitution $x=t/z$ in equation~\eqref{eq:N-T}, find its singular expansion at $z=\rho'$, and use singularity analysis to estimate $[x^rz^n](G(x,y,z)-T(z))/t_n$ when $r$ is fixed and $n\to\infty$, which gives the stated expression for the limiting distribution of the depth of node~$r$.
\end{proof}

For example, extracting the coefficient of $x$ in Theorem~\ref{thm:distribution_noncrossing},
the limiting distribution of the depth of node $1$ in noncrossing trees is given by the probability generating function
$$\frac{4y}{(3-y)^2},$$ which describes a (shifted) negative binomial $\NB(2,1/3)$. Extracting the coefficient of $x^2$, we find that the limiting distribution of the depth of node $2$ is given by
$$\frac{4y(8+9y+y^2)}{9(3 - y)^3}.$$
The limiting distribution of the depths of the first few nodes is plotted on the left of Figure~\ref{fig:dist_noncrossing_smallr}.

Differentiating the generating function in Theorem~\ref{thm:distribution_noncrossing} with respect to $y$ and evaluating at $y=1$, we obtain
\[
\frac{18x-8x^2T(\rho' x)^2}{9(1-x)^2}.
\]
Extracting the coefficient of $x^r$, we recover the expression in Theorem~\ref{thm:average_depths_noncrossing} for the average depth of node $r$ in the limit, for fixed~$r$.

\section{Increasing binary trees}\label{sec:increasing}

In this section we show that some of the tools from in previous sections also apply to trees whose nodes are labeled, with exponential generating functions playing the role of ordinary generating functions.
We focus on increasing binary trees, although similar methods can be applied to other kinds of labeled trees.

An increasing binary tree of size $n$ is a binary tree with $n$ internal nodes labeled with distinct labels $1,2,\dots,n$ in such a way that labels increase when moving away from the root. It is well known (see e.g.\ \cite[Sec.\ 5.1]{stanley_enumerative_2012}) that increasing binary trees of size $n$ are in bijection with permutations of $\{1,2,\dots,n\}$.
Indeed, the permutation associated to an increasing binary tree is obtained by reading the labels of its internal nodes in an inorder traversal: recursively perform an inorder traversal of the left subtree, followed by the root, followed by an inorder traversal of the right subtree. See Figure~\ref{fig:increasing-permutation} for an example. 

\begin{figure}[htb]
\centering
\begin{tikzpicture}[scale=.65] 
\node[draw,circle,inner sep=1pt,scale=.9] (v0) at (0,0) {1};
\node[draw,circle,inner sep=1pt,scale=.9] (v1) at (-2,-1) {2};
\node[draw,circle,inner sep=1pt,scale=.9] (v2) at (2,-1) {4};
\node[draw,circle,inner sep=1pt,scale=.9] (v3) at (-3,-2) {7};
\node[draw,circle,inner sep=1pt,scale=.9] (v4) at (-1,-2) {3};
\node[draw,circle,inner sep=1pt,scale=.9] (v5) at (1,-2) {5};
\fill (3,-2) circle (.1) coordinate (v6); 
\fill (-3.5,-3) circle (.1) coordinate (v7); 
\node[draw,circle,inner sep=1pt,scale=.9] (v8) at (-2.5,-3) {8};
\fill (-1.5,-3) circle (.1) coordinate (v9); 
\node[draw,circle,inner sep=1pt,scale=.9] (v10) at (-.5,-3) {6};
\fill (.5,-3) circle (.1) coordinate (v11); 
\fill (1.5,-3) circle (.1) coordinate (v12); 
\fill (-2.75,-4) circle (.1) coordinate (v13);
\fill (-2.25,-4) circle (.1) coordinate (v14);
\fill (-.75,-4) circle (.1) coordinate (v15); 
\fill (-.25,-4) circle (.1) coordinate (v16); 

\draw (v0)--(0,.5);
\draw (v7)--(v3)--(v1)--(v0)--(v2)--(v6);
\draw (v3)--(v8);
\draw (v13)--(v8)--(v14);
\draw (v1)--(v4);
\draw (v9)--(v4)--(v10);
\draw (v15)--(v10)--(v16);
\draw (v2)--(v5);
\draw (v11)--(v5)--(v12);

\draw[<->] (4,-2)--(4.5,-2);
\draw (5,-2) node[right] {$78236154$};
\end{tikzpicture}
\caption{An increasing binary tree of size $8$ and its associated permutation.}
\label{fig:increasing-permutation}
\end{figure}
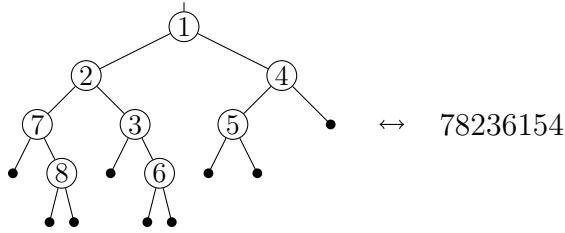

To describe the inverse bijection, we need to relax the conditions on the labels so that they can be any set of distinct positive integers. Then, for any permutation $w$ of this set, we can define an increasing binary tree recursively as follows. If $w$ is empty, the corresponding tree is the one with one leaf and no internal nodes; otherwise, factor it as $w=uiv$ where $i$ is the smallest letter of $w$, and construct a tree by placing $i$ at the root, and letting its left and right subtrees be the increasing binary trees corresponding to $u$ and $v$, respectively.

The above bijection implies that the exponential generating function for increasing binary trees with respect to size is simply 
$$F(z)=\frac{1}{1-z}.$$

\subsection{Enumeration with respect to leaf depths}\label{sec:increasing-leaves}

As in the unlabeled case from Section~\ref{sec:binary}, we index the $n+1$ leaves of an increasing binary tree of size $n$ from $0$ to $n$ from left to right, starting at $0$.
Let $I(x,y,z)$ be the multivariate exponential generating function where the coefficient of $x^ry^dz^n/n!$ is the number of increasing binary trees of size $n$ whose $r$th leaf has depth $d$.

\begin{theorem}\label{thm:I}
$$I(x,y,z)=\left((1-z)(1-xz)\right)^{-y}.$$
\end{theorem}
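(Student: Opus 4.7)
The plan is to set up a differential equation for $I(x,y,z)$ via the standard root decomposition of labeled increasing trees, and then solve it in closed form. I would view $I(x,y,z)$ as the exponential generating function, in $z$, for increasing binary trees equipped with a distinguished leaf, where $x$ marks the index of that leaf and $y$ marks its depth.

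The key decomposition is the same one that gives the equation $F'(z)=F(z)^2$ with $F(0)=1$ for the EGF $F(z)=1/(1-z)$ of increasing binary trees: split any tree of size $n\ge 1$ at its root, which necessarily carries the label $1$, and relabel each subtree in an order-preserving way. Splitting by whether the distinguished leaf lies in the left or right subtree, one follows the same case analysis as in the proof of Theorem~\ref{thm:B}, replacing the factor $z$ (which in the unlabeled case accounted for the root) by an integration in $z$, as is standard in the EGF calculus for ``box products'' of labeled classes.

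This yields the integral equation
\begin{equation*}
I(x,y,z) = 1 + \int_{0}^{z} y\, I(x,y,u)\!\left(F(u) + x F(xu)\right)\, du,
\end{equation*}
where the factor $y$ records the new edge from the root to the subtree containing the distinguished leaf, and the term $xF(xu)=x/(1-xu)$ accounts for the index shift produced when the distinguished leaf lies to the right of a subtree with $k$ internal nodes (and hence $k+1$ leaves). The initial condition $I(x,y,0)=1$ corresponds to the unique tree consisting of a single leaf, whose distinguished leaf has index $0$ and depth $0$. Differentiating in $z$ produces the separable linear ODE
\begin{equation*}
\frac{\partial I}{\partial z} = y\, I \left(\frac{1}{1-z} + \frac{x}{1-xz}\right).
\end{equation*}

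Integrating $\partial_z \log I$ with respect to $z$ gives $\log I = -y\log(1-z) - y\log(1-xz)$, and exponentiating yields the stated formula $I(x,y,z)=((1-z)(1-xz))^{-y}$. The only subtle point is the setup of the index-shift contribution in the labeled context: one must check that a left subtree of size $k$ contributes an EGF factor of $xF(xz)$, which comes from the binomial identity $\binom{n-1}{k}k! = (n-1)!/(n-1-k)!$ for distributing the labels. Once this is verified, solving the ODE is routine and the theorem follows.
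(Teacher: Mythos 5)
Your proposal is correct and follows essentially the same route as the paper: the root decomposition with label $1$ at the root yields the linear ODE $\partial_z I = yI\,(F(z)+xF(xz))$ with $I(x,y,0)=1$ (your integral equation is just its integrated form), and integrating $\partial_z\log I$ gives the stated closed form. The index-shift factor $xF(xz)$ and the case analysis on which subtree contains the distinguished leaf match the paper's argument exactly.
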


\begin{proof}
We think of $I(x,y,z)$ as the exponential generating function for increasing binary trees with a distinguished leaf, where $x$ marks the index of this leaf and $y$ marks its depth.  We claim that $I(x,y,z)$ satisfies the differential equation
\begin{equation}\label{eq:I} \frac{\partial}{\partial z}I(x,y,z)=y I(x,y,z)F(z)+xyF(xz)I(x,y,z)\end{equation}
with initial condition $I(x,y,0)=1$.

Indeed, taking the derivative has the effect of removing the root and decreasing the labels of its two subtrees by one.
If the distinguished leaf is in the left subtree, then this subtree contributes $yI(x,y,z)$, since the depth of this leaf is one more in the original tree, and the right subtree contributes $F(z)$.
If the distinguished leaf is in the right subtree, then the left subtree contributes $xI(xz)$, 
since it shifts the indexing of the leaves by the number of leaves in this subtree (which is one more than its size), and the right subtree contributes $yI(x,y,z)$ considering the increase in depth.

The tree consisting of one node has size $0$, and so it does not appear when taking the derivative, but it is counted in the initial condition $I(x,y,0)=1$.
Integrating~\eqref{eq:I} and using this initial condition, the expression follows.
\end{proof}

As an example, the coefficient of $z^3/3!$ in $I(x,y,z)$ equals
$$(2y+3y^2+y^3)+(3y^2+3y^3)\,x+(3y^2+3y^3)\,x^2+(2y+3y^2+y^3)\,x^3,$$
whose four coefficients, as a polynomial in $x$, describe the distribution of the depths of each of the four leaves in the trees in Figure~\ref{fig:n=4increasing}.

\begin{figure}[htb]
\centering
\begin{tikzpicture}[scale=.65] 
\node[draw,circle,inner sep=1pt,scale=.9] (v0) at (0,0) {1};
\node[draw,circle,inner sep=1pt,scale=.9] (v1) at (-1,-1) {2};
\node[draw,circle,inner sep=1pt,scale=.9] (v3) at (-1.7,-2) {3};
\draw (v0)--(0,.5);
\fill (1,-1) circle (.1) coordinate (v2) node[below,blue] {1};
\fill (-.3,-2) circle (.1) coordinate (v4) node[below,blue] {2};
\fill (-2.2,-3) circle (.1) coordinate (v5) node[below,red] {3};
\fill (-1.2,-3) circle (.1) coordinate (v6) node[below,blue] {3};
\draw (v2)--(v0)--(v1)--(v3)--(v5);
\draw (v3)--(v6);
\draw (v1)--(v4);

\begin{scope}[shift={(4,0)}]
\node[draw,circle,inner sep=1pt,scale=.9] (v0) at (0,0) {1};
\node[draw,circle,inner sep=1pt,scale=.9] (v1) at (-1,-1) {2};
\node[draw,circle,inner sep=1pt,scale=.9] (v4) at (-.3,-2) {3};
\draw (v0)--(0,.5);
\fill (1,-1) circle (.1) coordinate (v2) node[below,blue] {1};
\fill (-1.7,-2) circle (.1) coordinate (v3) node[below,red] {2};
\fill (-.8,-3) circle (.1) coordinate (v5) node[below,blue] {3};
\fill (.2,-3) circle (.1) coordinate (v6) node[below,blue] {3};
\draw (v2)--(v0)--(v1)--(v4)--(v5);
\draw (v4)--(v6);
\draw (v1)--(v3);
\end{scope}

\begin{scope}[shift={(8,0)}]
\node[draw,circle,inner sep=1pt,scale=.9] (v0) at (0,0) {1};
\node[draw,circle,inner sep=1pt,scale=.9] (v1) at (-1,-1) {2};
\node[draw,circle,inner sep=1pt,scale=.9] (v2) at (1,-1) {3};
\draw (v0)--(0,.5);
\fill (-1.7,-2) circle (.1) coordinate (v3) node[below,red] {2};
\fill (-.3,-2) circle (.1) coordinate (v4) node[below,blue] {2};
\fill (.3,-2) circle (.1) coordinate (v5) node[below,blue] {2};
\fill (1.7,-2) circle (.1) coordinate (v6) node[below,blue] {2};
\draw (v6)--(v2)--(v0)--(v1)--(v3);
\draw (v1)--(v4);
\draw (v2)--(v5);
\end{scope}

\begin{scope}[shift={(12,0)}]
\node[draw,circle,inner sep=1pt,scale=.9] (v0) at (0,0) {1};
\node[draw,circle,inner sep=1pt,scale=.9] (v1) at (-1,-1) {3};
\node[draw,circle,inner sep=1pt,scale=.9] (v2) at (1,-1) {2};
\draw (v0)--(0,.5);
\fill (-1.7,-2) circle (.1) coordinate (v3) node[below,red] {2};
\fill (-.3,-2) circle (.1) coordinate (v4) node[below,blue] {2};
\fill (.3,-2) circle (.1) coordinate (v5) node[below,blue] {2};
\fill (1.7,-2) circle (.1) coordinate (v6) node[below,blue] {2};
\draw (v6)--(v2)--(v0)--(v1)--(v3);
\draw (v1)--(v4);
\draw (v2)--(v5);
\end{scope}

\begin{scope}[shift={(16,0)}]
\node[draw,circle,inner sep=1pt,scale=.9] (v0) at (0,0) {1};
\node[draw,circle,inner sep=1pt,scale=.9] (v1) at (1,-1) {2};
\node[draw,circle,inner sep=1pt,scale=.9] (v4) at (.3,-2) {3};
\draw (v0)--(0,.5);
\fill (-1,-1) circle (.1) coordinate (v2) node[below,red] {1};
\fill (1.7,-2) circle (.1) coordinate (v3) node[below,blue] {2};
\fill (.8,-3) circle (.1) coordinate (v5) node[below,blue] {3};
\fill (-.2,-3) circle (.1) coordinate (v6) node[below,blue] {3};
\draw (v2)--(v0)--(v1)--(v4)--(v5);
\draw (v4)--(v6);
\draw (v1)--(v3);
\end{scope}

\begin{scope}[shift={(20,0)}]
\node[draw,circle,inner sep=1pt,scale=.9] (v0) at (0,0) {1};
\node[draw,circle,inner sep=1pt,scale=.9] (v1) at (1,-1) {2};
\node[draw,circle,inner sep=1pt,scale=.9] (v3) at (1.7,-2) {3};
\draw (v0)--(0,.5);
\fill (-1,-1) circle (.1) coordinate (v2) node[below,red] {1};
\fill (.3,-2) circle (.1) coordinate (v4) node[below,blue] {2};
\fill (2.2,-3) circle (.1) coordinate (v5) node[below,blue] {3};
\fill (1.2,-3) circle (.1) coordinate (v6) node[below,blue] {3};
\draw (v2)--(v0)--(v1)--(v3)--(v5);
\draw (v3)--(v6);
\draw (v1)--(v4);
\end{scope}
\end{tikzpicture}
\caption{The six increasing binary trees of size 3, with each leaf labeled by its depth. The depths of the $0$th leaf, in red, are encoded by the polynomial $2y+3y^2+y^3$.}
\label{fig:n=4increasing}
\end{figure}
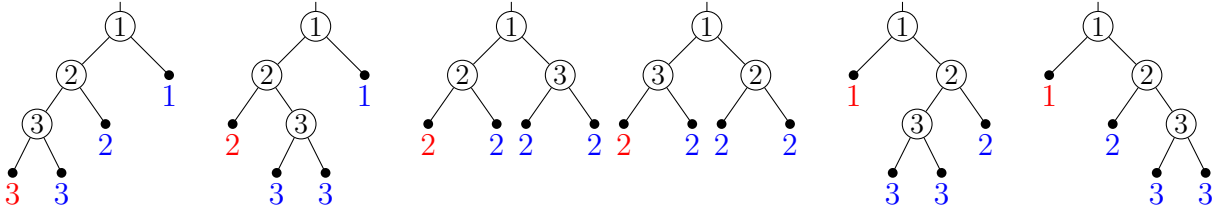

\begin{theorem}\label{thm:average_depths_increasing}
For $0\le r\le n$, the average depth of the $r$th leaf over all increasing binary trees of size $n$ equals
$$H_r+H_{n-r},$$
where $H_k=\sum_{i=1}^k \frac{1}{i}$ is the $k$th harmonic number.
\end{theorem}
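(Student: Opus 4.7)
Given the compact closed form $I(x,y,z)=((1-z)(1-xz))^{-y}$ from Theorem~\ref{thm:I}, the result should follow from essentially a single coefficient extraction guided by equation~\eqref{eq:average}. The plan is as follows.

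First, I would differentiate $I$ with respect to $y$ and evaluate at $y=1$. Writing $I(x,y,z)=e^{-y\log((1-z)(1-xz))}$, this yields
\[
\dI(x,z)\coloneqq\left.\frac{\partial I}{\partial y}\right|_{y=1}=\frac{-\log((1-z)(1-xz))}{(1-z)(1-xz)}.
\]
Setting $y=1$ in $I$ gives $I(x,1,z)=1/((1-z)(1-xz))$, whose coefficient of $x^rz^n$ equals $1$ for all $0\le r\le n$. In the EGF setting of Theorem~\ref{thm:I}, this correctly reflects that all $n!$ increasing binary trees of size~$n$ have an $r$th leaf. The $n!$'s from the EGF convention then cancel in the ratio~\eqref{eq:average}, so the average depth of the $r$th leaf is simply $[x^rz^n]\dI(x,z)$.

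To extract this coefficient, I would use the splitting $-\log((1-z)(1-xz))=\sum_{k\ge 1}z^k/k+\sum_{k\ge 1}(xz)^k/k$ together with the bivariate expansion $1/((1-z)(1-xz))=\sum_{i,j\ge 0}x^j z^{i+j}$. In the resulting product, the piece coming from $\sum_k z^k/k$ contributes at indices $j=r$ and $k+i=n-r$ with $k\ge1$, producing $\sum_{k=1}^{n-r}1/k=H_{n-r}$; the piece coming from $\sum_k(xz)^k/k$ contributes at indices $i=n-r$ and $k+j=r$ with $k\ge1$, producing $\sum_{k=1}^{r}1/k=H_r$. Adding the two gives $H_r+H_{n-r}$, as claimed.

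Because the generating function is so explicit, there is essentially no obstacle here. The only mild subtlety is the EGF bookkeeping in~\eqref{eq:average}, which is neutralized by the identity $[x^rz^n]I(x,1,z)=1$; in contrast with the Catalan-type sections earlier in the paper, no asymptotic estimates or special summation identities are needed, and the harmonic-number answer falls directly out of the logarithmic derivative of $((1-z)(1-xz))^{-y}$.
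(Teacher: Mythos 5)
Your proposal is correct and follows essentially the same route as the paper: differentiate $I(x,y,z)$ at $y=1$ to get $\dI(x,z)=\frac{1}{(1-z)(1-xz)}\ln\frac{1}{(1-z)(1-xz)}$, note that the EGF normalization makes $[x^rz^n]\dI(x,z)$ itself the average, and extract the coefficient by expanding the product of the geometric factors with the logarithm. Your bookkeeping of the two logarithmic pieces contributing $H_{n-r}$ and $H_r$ matches the paper's computation exactly, up to a trivial regrouping of the series.
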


\begin{proof}
Consider the partial derivative
\begin{equation} \label{eq:dIxz}
\dI(x,z)\coloneqq\left.\frac{\partial I(x,y,z)}{\partial y}\right|_{y=1}=\frac{1}{(1-z)(1-xz)}\ln\left(\frac{1}{(1-z)(1-xz)}\right).
\end{equation}
Since the number of increasing binary trees of size $n$ equals $n!$,
the coefficient of $x^rz^n$ in $\dI(x,z)$ is the average depth of the $r$th leaf over all increasing binary trees of size $n$. 
Extracting coefficients we get
$$[z^n]\dI(x,z)=
[z^n]\left(\sum_{k\ge0} z^k\right)\left(\sum_{j\ge0} x^jz^j\right)\left(\sum_{i\ge1} \frac{(1+x^i)z^i}{i}\right)
=\sum_{j\ge0}\sum_{i=1}^{n-j} \frac{x^j(1+x^i)}{i}.$$
Thus, for $0\le r\le n$, 
$$[x^rz^n]\dI(x,z)=H_r+H_{n-r}.\qedhere$$
\end{proof}

The average leaf depths computed using Theorem~\ref{thm:average_depths_increasing} for $n=20$ are shown in Figure~\ref{fig:average_depths_increasing}. 
An important difference with the unlabeled case studied in Section~\ref{sec:binary} is that, as $n\to\infty$, the average depth of the $r$th leaf in increasing binary trees of size $n$ is unbounded even for fixed $r$. For example, the leftmost leaf ($r=0$) has average depth $H_n$.

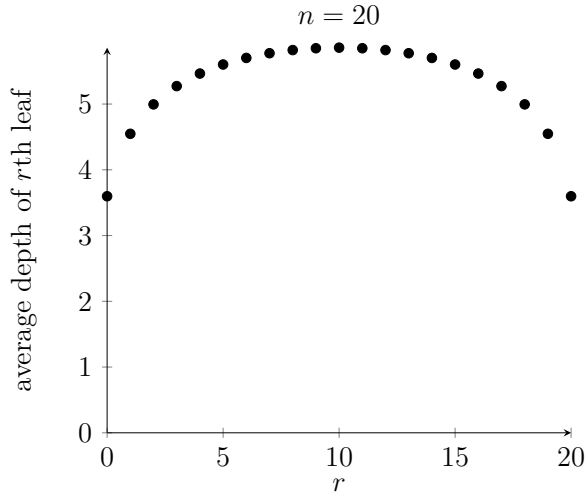
\begin{figure}[h]
\centering
\begin{tikzpicture}[scale=.9]
\begin{axis}[
	title={$n=20$},
    axis lines = left,
    xlabel = \(r\),
    ylabel = {average depth of $r$th leaf},
    ymin=0, xmax=20,
]
\addplot[
    mark=*,
    only marks,
    ]
    coordinates {
(0, 55835135/15519504) (1, 352893319/77597520) (2, 20400421/4084080) (3, 64604663/12252240) (4, 3938059/720720) (5, 2018579/360360) (6, 410923/72072) (7, 416071/72072) (8, 4034/693) (9, 16213/2772) (10, 7381/1260) (11, 16213/2772) (12, 4034/693) (13, 416071/72072) (14, 410923/72072) (15, 2018579/360360) (16, 3938059/720720) (17, 64604663/12252240) (18, 20400421/4084080) (19, 352893319/77597520) (20, 55835135/15519504)
    };
\end{axis}
\end{tikzpicture}
\caption{The average depth of each leaf in increasing binary trees of size $n=20$.}
\label{fig:average_depths_increasing}
\end{figure}

If $r=r(n)=\alpha n$ for some constant $0<\alpha<1$, the average depth of the $r$th leaf is asymptotically given by
$2\ln n+\ln\alpha+\ln(1-\alpha)$.
Dividing by the average depth of all the leaves in increasing binary trees of size $n$, which equals
$$\frac{[z^n]\dI(1,z)}{n+1}\sim 2\ln n,$$
it follows that, as $n\to\infty$, the normalized average depth of the $r$th leaf is the constant~$1$. 
In particular, the analogue of the plot on the right of Figure~\ref{fig:average_depths_large_r} in the case of increasing binary trees would be the function that is equal to $1$ for $0<\alpha<1$, and to $1/2$ for $\alpha\in\{0,1\}$.

\subsection{Enumeration with respect to internal node depths}\label{sec:increasing-nodes}

The above argument can be easily modified to keep track of the depths of the internal nodes instead of the leaves.
We index the $n$ internal nodes of an increasing binary tree of size $n$ following an inorder traversal, starting at $0$.
Recall that when reading the node labels in this order we obtain the permutation associated to the tree, as shown in Figure~\ref{fig:increasing-permutation}.
This allows us to think of the depths of the internal nodes as statistics on each of the entries of the permutation. 
Let $J(x,y,z)$ be the exponential generating function where 
the coefficient of $x^ry^dz^n/n!$ is the number of increasing binary trees of size $n$ whose $r$th internal node has depth $d$.

We remark that our indexing of the nodes following an inorder traversal is unrelated to their labels. The distribution of the depth of the node with a given label over increasing binary trees, which is a different problem, has been studied by Panholzer and Prodinger in~\cite{panholzer_level_2007}.

\begin{theorem}\label{thm:J}
$$J(x,y,z)=\left((1-z)(1-xz)\right)^{-y}\int_0^z \left((1-t)(1-xt)\right)^{y-1}\,dt.$$
\end{theorem}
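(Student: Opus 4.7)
The plan is to mimic the proof of Theorem~\ref{thm:I}. I would interpret $J(x,y,z)$ as the exponential generating function for increasing binary trees equipped with a distinguished internal node, where $x$ marks the inorder index of that node and $y$ marks its depth. Taking $\partial/\partial z$ corresponds to removing the root (which carries the smallest label) and leaving behind the two subtrees, whose labels then form two disjoint subsets of $\{2,\dots,n\}$ in the usual species-product fashion.

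Under inorder indexing, if the left subtree has $k$ internal nodes, then its internal nodes receive indices $0,\ldots,k-1$, the root receives index $k$, and the internal nodes of the right subtree receive indices $k+1,\ldots,n-1$. I would then split into three cases according to the location of the distinguished node. If it is the root itself, its depth is $0$ and its index equals the size of the left subtree; the left subtree contributes $F(xz)=1/(1-xz)$ and the right contributes $F(z)=1/(1-z)$. If it lies in the left subtree, its depth increases by one while its index is unchanged, so this subtree contributes $yJ(x,y,z)$ and the right contributes $F(z)$. If it lies in the right subtree, its index is shifted by (size of left subtree)${}+1$, so the left subtree together with the root contributes $xF(xz)$ and the right contributes $yJ(x,y,z)$. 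Summing these three cases yields
\begin{equation}\label{eq:Jode}
\frac{\partial J}{\partial z}=\frac{1}{(1-z)(1-xz)}+yJ(x,y,z)\left(\frac{1}{1-z}+\frac{x}{1-xz}\right),
\end{equation}
with initial condition $J(x,y,0)=0$, since the tree of size $0$ has no internal nodes.

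Equation~\eqref{eq:Jode} is a linear first-order ODE in $z$, and I would solve it using the integrating factor $\mu(x,z)=\left((1-z)(1-xz)\right)^y$. Since $\partial_z\mu=-y\mu\left(\frac{1}{1-z}+\frac{x}{1-xz}\right)$, multiplying~\eqref{eq:Jode} through by $\mu$ converts the left-hand side into $\partial_z(\mu J)$ and the right-hand side into $((1-z)(1-xz))^{y-1}$. Integrating from $0$ to $z$ and applying the initial condition then yields the stated formula.

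The main obstacle is the combinatorial bookkeeping in the decomposition, particularly identifying the extra factor of $x$ contributed by the root in the third case: it accounts for the additional unit of index shift caused by the root itself, beyond the shift caused by the internal nodes of the left subtree. Once~\eqref{eq:Jode} is established, the integration is routine.
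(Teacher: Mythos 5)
Your proposal is correct and follows essentially the same route as the paper: the identical case analysis (distinguished node at the root, in the left subtree, or in the right subtree) yields the same linear ODE $\partial_z J = F(xz)F(z) + yJ(x,y,z)F(z) + xyF(xz)J(x,y,z)$ with $J(x,y,0)=0$, which the paper also solves by integration. Your explicit integrating-factor computation merely fills in the integration step that the paper leaves as routine.
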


\begin{proof}
Thinking of $J(x,y,z)$ as the exponential generating function for increasing binary trees with a distinguished internal node, where $x$ marks its index and $y$ marks its depth, we obtain the differential equation
\begin{equation}\label{eq:J} \frac{\partial}{\partial z}J(x,y,z)=F(xz)F(z)+y J(x,y,z)F(z)+xyF(xz)J(x,y,z)\end{equation}
with initial condition $J(x,y,0)=0$.
The argument is similar to the one for equation~\eqref{eq:I}, with the difference that the distinguished node can be the root, in which case we get a contribution $F(xz)$ from the left subtree and $F(z)$ from the right subtree.
The initial condition is now set to $0$ because the tree of size $0$ has no internal nodes.
The expression for $J(x,y,z)$ follows by integrating~\eqref{eq:J}.
\end{proof}

The analysis of the average depth of each internal node is very similar to one we did for leaves in the proof of Theorem~\ref{thm:average_depths_increasing}, so we leave out the details. We obtain the following average.

\begin{theorem}
For $0\le r\le n-1$, the average depth of the $r$th internal node over all increasing binary trees of size $n$ equals
$$[x^rz^n]\left.\frac{\partial J(x,y,z)}{\partial y}\right|_{y=1}=H_{r+1}+H_{n-r}-2.$$
\end{theorem}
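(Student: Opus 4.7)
Following the same strategy as the proof of Theorem~\ref{thm:average_depths_increasing}, the plan is to compute $\widehat J(x,z) \coloneqq \partial J(x,y,z)/\partial y|_{y=1}$ and extract $[x^rz^n]\widehat J$. Because $J$ is an exponential generating function and there are $n!$ increasing binary trees of size $n$, equation~\eqref{eq:average} shows that $[x^rz^n]\widehat J$ is exactly the sought average depth.

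Writing $M(w) = (1-w)(1-xw)$ so that $J(x,y,z) = M(z)^{-y}\int_0^z M(t)^{y-1}\,dt$ by Theorem~\ref{thm:J}, I would apply the product rule together with $\partial_y M^{-y} = -\ln M \cdot M^{-y}$ and evaluate at $y=1$ to obtain
\[
\widehat J(x,z) = \frac{1}{M(z)}\left(-z\ln M(z) + \int_0^z \ln M(t)\,dt\right).
\]
Integrating the second summand by parts, the boundary term $z\ln M(z)$ cancels the first summand, leaving
\[
\widehat J(x,z) = \frac{1}{M(z)}\int_0^z t\left(\frac{1}{1-t}+\frac{x}{1-xt}\right)dt,
\]
since $M'(t)/M(t) = -1/(1-t) - x/(1-xt)$. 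Evaluating this elementary integral produces $\widehat J$ as a sum of three terms of the form $-\ln(1-z)/M(z)$, $-\ln(1-xz)/\bigl(xM(z)\bigr)$, and $-2z/M(z)$.

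For the coefficient extraction, I would mirror the argument used for $\widehat I$ in the proof of Theorem~\ref{thm:average_depths_increasing}. Using $1/M(z) = \sum_{n\ge 0}[n+1]_x z^n$ and $-\ln(1-w) = \sum_{m\ge 1} w^m/m$, a direct Cauchy-product computation should show that the three summands contribute $H_{n-r}$, $H_{r+1}$, and $-2$ respectively to $[x^rz^n]\widehat J$, yielding the claimed value $H_{r+1}+H_{n-r}-2$. I expect the main bookkeeping hurdle to be the middle term: the factor $1/x$ rewrites $-\ln(1-xz)/x$ as $\sum_{m\ge 1} x^{m-1}z^m/m$, and the resulting index shift is what converts the natural summation $\sum_{m=1}^{\bullet} 1/m$ into $H_{r+1}$ rather than $H_r$. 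Once this shift is tracked correctly, the boundary cases $r=0$ and $r=n-1$ follow automatically using $H_0 = 0$.
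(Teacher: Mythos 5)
Your proposal is correct and follows essentially the same route the paper intends: the paper omits the details of this proof, deferring to the analogous computation for leaf depths in Theorem~\ref{thm:average_depths_increasing}, and your derivation (differentiate at $y=1$, simplify via integration by parts to $\widehat J(x,z)=\bigl(-\ln(1-z)-\tfrac1x\ln(1-xz)-2z\bigr)/\bigl((1-z)(1-xz)\bigr)$, then extract coefficients by Cauchy products) supplies exactly those details, with each of the three terms contributing $H_{n-r}$, $H_{r+1}$, and $-2$ as claimed.
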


\section{Concluding remarks}

We have explored a technique that allows one to keep track of infinitely many statistics using generating functions with finitely many variables, and we have applied it to the enumeration of various types of trees with respect to the depths of individual leaves or nodes, as well as Dyck paths with respect to some height statistics.
These are many other combinatorial objects where this method could be similarly applied, including Motzkin paths with respect to vertex or step heights, $k$-ary trees with respect to leaf depths, and plane increasing trees with respect to node or leaf depths, for example.

We note that when computing averages in most of our examples (see equations~\eqref{eq:coefdB}, \eqref{eq:coefdD}, \eqref{eq:coefdU}, \eqref{eq:coefdA} and \eqref{eq:coefdG}), we were able to extract the coefficients of the generating functions and then deduce their asymptotic behavior. It is likely that, more generally, tools from analytic combinatorics in several variables~\cite{pemantle_analytic_2013,melczer_algorithmic_2021} can be used to describe the asymptotic behavior of the coefficients of the resulting generating functions even when one does not have exact formulas for these coefficients.

\subsection*{Acknowledgments}
This work was partially supported by Simons Collaboration Grant \#929653. The author thanks Helmut Prodinger for providing useful references, Robert Noble for pointing out the connections of leaf depths to the Sackin index in mathematical biology, and two anonymous referees for their careful reading.

\bibliographystyle{plain}
\bibliography{leaf_depth}

\begin{thebibliography}{10}

\bibitem{bottman_2-associahedra_2019}
Nathaniel Bottman.
\newblock 2-associahedra.
\newblock {\em Algebr. Geom. Topol.}, 19(2):743--806, 2019.

\bibitem{bousquet-melou_limit_2006}
Mireille Bousquet-M{\'e}lou.
\newblock Limit laws for embedded trees: applications to the integrated
  {superBrownian} excursion.
\newblock {\em Random Structures Algorithms}, 29(4):475--523, 2006.

\bibitem{bousquet-melou_density_2006}
Mireille Bousquet-M{\'e}lou and Svante Janson.
\newblock The density of the {ISE} and local limit laws for embedded trees.
\newblock {\em Ann. Appl. Probab.}, 16(3):1597--1632, 2006.

\bibitem{dershowitz_enumerations_1980}
Nachum Dershowitz and Shmuel Zaks.
\newblock Enumerations of ordered trees.
\newblock {\em Discrete Math.}, 31(1):9--28, 1980.

\bibitem{deutsch_dyck_1999}
Emeric Deutsch.
\newblock Dyck path enumeration.
\newblock {\em Discrete Math.}, 204(1-3):167--202, 1999.

\bibitem{deutsch_involution_1999}
Emeric Deutsch.
\newblock An involution on {Dyck} paths and its consequences.
\newblock {\em Discrete Math.}, 204(1-3):163--166, 1999.

\bibitem{deutsch_statistics_2002}
Emeric Deutsch and Marc Noy.
\newblock Statistics on non-crossing trees.
\newblock {\em Discrete Math.}, 254(1-3):75--87, 2002.

\bibitem{disanto_local_2019}
Filippo Disanto and Emanuele Munarini.
\newblock Local height in weighted {Dyck} models of random walks and the
  variability of the number of coalescent histories for caterpillar-shaped gene
  trees and species trees.
\newblock {\em SN Appl. Sci.}, 1(6):578, June 2019.

\bibitem{drmota_distribution_1994}
Michael Drmota.
\newblock Distribution of the height of leaves of rooted trees.
\newblock {\em Diskret. Mat.}, 6(1):67--82, 1994.

\bibitem{drmota_profile_1997}
Michael Drmota and Bernhard Gittenberger.
\newblock On the profile of random trees.
\newblock {\em Random Structures Algorithms}, 10(4):421--451, 1997.

\bibitem{dulucq_cordes_1993}
Serge Dulucq and Jean-Guy Penaud.
\newblock Cordes, arbres et permutations.
\newblock {\em Discrete Math.}, 117(1-3):89--105, 1993.

\bibitem{flajolet_analytic_2000}
Philippe Flajolet and Marc Noy.
\newblock Analytic combinatorics of chord diagrams.
\newblock In {\em Formal power series and algebraic combinatorics ({Moscow},
  2000)}, pages 191--201. Springer, Berlin, 2000.

\bibitem{flajolet_average_1982}
Philippe Flajolet and Andrew Odlyzko.
\newblock The average height of binary trees and other simple trees.
\newblock {\em J. Comput. System Sci.}, 25(2):171--213, 1982.

\bibitem{flajolet_analytic_2009}
Philippe Flajolet and Robert Sedgewick.
\newblock {\em Analytic combinatorics}.
\newblock Cambridge University Press, Cambridge, 2009.

\bibitem{gutjahr_asymptotic_1992}
W.~Gutjahr and G.~C. Pflug.
\newblock The asymptotic distribution of leaf heights in binary trees.
\newblock {\em Graphs Combin.}, 8(3):243--251, 1992.

\bibitem{kaigh_invariance_1976}
W.~D. Kaigh.
\newblock An invariance principle for random walk conditioned by a late return
  to zero.
\newblock {\em Ann. Probability}, 4(1):115--121, 1976.

\bibitem{king_simple_2021}
Matthew~C. King and Noah~A. Rosenberg.
\newblock A simple derivation of the mean of the {Sackin} index of tree balance
  under the uniform model on rooted binary labeled trees.
\newblock {\em Math. Biosci.}, 342:Paper No. 108688, 3, 2021.

\bibitem{kirschenhofer_height_1983}
Peter Kirschenhofer.
\newblock On the height of leaves in binary trees.
\newblock {\em J. Combin. Inform. System Sci.}, 8(1):44--60, 1983.

\bibitem{kirschenhofer_new_1983}
Peter Kirschenhofer.
\newblock Some new results on the average height of binary trees.
\newblock {\em Ars Combin.}, 16(A):255--260, 1983.

\bibitem{melczer_algorithmic_2021}
Stephen Melczer.
\newblock {\em Algorithmic and symbolic combinatorics{\textemdash}an invitation
  to analytic combinatorics in several variables}.
\newblock Texts and {Monographs} in {Symbolic} {Computation}. Springer, Cham,
  2021.

\bibitem{mir_new_2013}
Arnau Mir, Francesc Rossell{\'o}, and Luc{\'i}a Rotger.
\newblock A new balance index for phylogenetic trees.
\newblock {\em Math. Biosci.}, 241(1):125--136, 2013.

\bibitem{noy_enumeration_1998}
Marc Noy.
\newblock Enumeration of noncrossing trees on a circle.
\newblock In {\em Discrete {Mathematics}}, volume 180, pages 301--313, 1998.

\bibitem{panholzer_descendants_1997}
Alois Panholzer and Helmut Prodinger.
\newblock Descendants and ascendants in binary trees.
\newblock {\em Discrete Math. Theor. Comput. Sci.}, 1(1):247--266, 1997.

\bibitem{panholzer_moments_2002}
Alois Panholzer and Helmut Prodinger.
\newblock Moments of level numbers of leaves in binary trees.
\newblock In {\em Journal of {Statistical} {Planning} and {Inference}}, volume
  101, pages 267--279. 2002.

\bibitem{panholzer_level_2007}
Alois Panholzer and Helmut Prodinger.
\newblock Level of nodes in increasing trees revisited.
\newblock {\em Random Structures Algorithms}, 31(2):203--226, 2007.

\bibitem{pemantle_analytic_2013}
Robin Pemantle and Mark~C. Wilson.
\newblock {\em Analytic combinatorics in several variables}, volume 140 of {\em
  Cambridge {Studies} in {Advanced} {Mathematics}}.
\newblock Cambridge University Press, Cambridge, 2013.

\bibitem{stanley_enumerative_1999}
Richard~P. Stanley.
\newblock {\em Enumerative combinatorics. {Vol}. 2}, volume~62 of {\em
  Cambridge {Studies} in {Advanced} {Mathematics}}.
\newblock Cambridge University Press, Cambridge, 1999.

\bibitem{stanley_enumerative_2012}
Richard~P. Stanley.
\newblock {\em Enumerative combinatorics. {Vol}. 1}, volume~49 of {\em
  Cambridge {Studies} in {Advanced} {Mathematics}}.
\newblock Cambridge University Press, Cambridge, second edition, 2012.

\end{thebibliography}

\end{document}